\documentclass[12pt,a4paper,pdftex]{amsart}
\usepackage[margin=2.6cm]{geometry}
\usepackage[T1]{fontenc}
\usepackage[utf8]{inputenc}
\usepackage{lmodern}
\usepackage{amssymb,mathtools}
\usepackage{tikz-cd}
\numberwithin{equation}{section}

\newtheorem{thm}{Theorem}[section]
\newtheorem{prop}[thm]{Proposition}
\newtheorem{lem}[thm]{Lemma}
\newtheorem{cor}[thm]{Corollary}

\theoremstyle{definition}
\newtheorem{defn}[thm]{Definition}
\newtheorem{assump}[thm]{Assumption}
\theoremstyle{remark}

\DeclareMathOperator{\Hom}{Hom}

\newcommand*{\id}{\mathrm{id}}

\newcommand{\BSbimod}{\mathcal{BS}}

\newcommand*{\sqbinom}[2]{\genfrac{[}{]}{0pt}{}{#1}{#2}}

\DeclarePairedDelimiter{\floor}{\lfloor}{\rfloor}
\newcommand{\Z}{\mathbb{Z}}
\newcommand{\Coef}{\mathbb{K}}

\hyphenation{Soergel Elias Williamson}

\title{A homomorphism between Bott-Samelson bimodules}
\author{Noriyuki Abe}
\address{Graduate School of Mathematical Sciences, the University of Tokyo, 3-8-1 Komaba, Meguro-ku, Tokyo 153-8914, Japan.}
\email{abenori@ms.u-tokyo.ac.jp}
\subjclass[2010]{20F55}
\begin{document}
\begin{abstract}
In the previous paper, we defined a new category which categorifies the Hecke algebra.
This is a generalization of the theory of Soergel bimodules.
To prove theorems, the existences of certain homomorphisms between Bott-Samelson bimodules are assumed.
In this paper, we prove this assumption.
We only assume the vanishing of certain two-colored quantum binomial coefficients.
\end{abstract}
\maketitle
\section{Introduction}
In recent development of representation theory of algebraic reductive groups, the Hecke category plays central role.
Here, the Hecke category means a categorification of the Hecke algebra of Coxeter groups.
One can find the importance of the Hecke category in representation theory in Williamson's survey~\cite{MR3966750}.

There are several incarnations of the Hecke category.
They can be roughly divided into two types: geometric ones and combinatorial ones.
The geometric Hecke category which appeared in representation theory first is the category of semisimple perverse shaves on the flag variety.
This category is the Hecke category with a field of characteristic zero.
Juteau-Mauter-Williamson~\cite{MR3230821} introduced the notion of parity sheave.
The category of parity sheaves on the flag variety is a geometric incarnation of the Hecke category with any field.
When the characteristic of the ground field is zero, parity sheaves are the same as semisimple perverse shaves.

Soergel~\cite{MR2329762} introduced a category which is now called the category of Soergel bimodules.
Similar to the situation of the geometric ones, if the characteristic of the ground field is zero, this category is the Hecke category.
Soergel's category is equivalent to the category of semisimple perverse sheaves on the flag variety over a filed of characteristic zero.
This fact is used to prove the Kosuzl duality of the category $\mathcal{O}$~\cite{MR1322847}.

Soergel's category does not behave well over a field of positive characteristic in general.
As a generalization of Soergel's category, the author introduced a new combinatorial category and proved that this category is the Hecke category in more general situation than Soergel's theory~\cite{arXiv:1901.02336_accepted}.
There is also another combinatorial category defined by Elias-Williamson~\cite{MR3555156} which is defined earlier than~\cite{arXiv:1901.02336_accepted}.
The category is called the diagrammatic Hecke category and it is proved that the category is the Hecke category in general situation.
We remark that these categories are equivalent to each others when they behave well~\cite{MR3805034,arXiv:1901.02336_accepted,arXiv:2004.09014}.

It is proved that these theories works well very general, including most cases over a field of positive characteristic.
However, we still need some assumptions.
The situation is subtle.
In \cite{arXiv:1901.02336_accepted}, we need one non-trivial assumption which we recall later.
One problem is that this assumption is not easy to check.
In \cite{arXiv:1901.02336_accepted}, a sufficient condition for this assumption which we can check easier is given.
However the author thought that the assumption holds in more general.
The aim of this paper is to prove this assumption under a mild condition.
(The situation is also subtle for the diagrammatic Hecke category. See \cite[5.1]{arXiv:2011.05432}. We do not discuss about it in this paper.)

\subsection{Soergel bimodules}
We recall the category introduced in \cite{arXiv:1901.02336_accepted} and the assumption.
Let $(W,S)$ be a Coxeter system such that $\#S < \infty$ and $\Coef$ a commutative integral domain.
We fix a realization~\cite[Definition~3.1]{MR3555156} $(V,\{\alpha_s\}_{s\in S},\{\alpha_s^\vee\}_{s\in S})$ of $(W,S)$ over $\Coef$.
Namely $V$ is a free $\Coef$-module of finite rank with an action of $W$, $\alpha_s\in V$, $\alpha_s^\vee\in\Hom_{\Coef}(V,\Coef)$ such that
\begin{enumerate}
\item $s(v) = v - \langle \alpha_s^\vee,v\rangle\alpha_s$ for any $s\in S$ and $v\in V$.
\item $\langle \alpha_s^\vee,\alpha_s\rangle = 2$.
\item Let $s,t\in S$ ($s\ne t$) and $m_{s,t}$ the order of $st$.
If $m_{s,t} < \infty$ then the two-colored quantum numbers $[m_{s,t}]_X,[m_{s,t}]_Y$ attached to $\{s,t\}$ are both zero. (See \ref{subsec:Finite Coxeter group of rank two and a realization} for the definition of these numbers.)
\end{enumerate}
We also assume the Demazure surjectivity, namely we assume that $\alpha_s\colon \Hom_{\Coef}(V,\Coef)\to \Coef$ and $\alpha_s^\vee\colon V\to \Coef$ are both surjective for any $s\in S$.

We define the category $\mathcal{C}$ as follows.
Let $R = S(V)$ be the symmetric algebra and $Q$ the field of fractions of $R$.
An object of $\mathcal{C}$ is $(M,(M_Q^x)_{x\in W})$ such that $M$ is a graded $R$-bimodule, $M_Q^x$ is a $Q$-bimodule such that $mp = x(p)m$ for any $m\in M_Q^x$, $p\in Q$ and $M\otimes_{R}Q = \bigoplus_{x\in W}M_Q^x$.
We also assume that $M$ is flat as a right $R$-module.
A morphism $\varphi\colon (M,(M_Q^x)_{x\in W})\to (N,(N_Q^x)_{x\in W})$ is an $R$-bimodule homomorphism $\varphi\colon M\to N$ of degree zero such that $(\varphi\otimes\id_Q)(M_Q^x)\subset N_Q^x$.
We often write $M$ for $(M,(M_Q^x))$.
For $M,N\in \mathcal{C}$, we define $M\otimes N\in \mathcal{C}$ as follows.
As an $R$-bimodule, we have $M\otimes N = M\otimes_{R}N$ and $(M\otimes N)_Q^x = \bigoplus_{yz = x}M_Q^y\otimes_Q N^z_Q$.

For each $s\in S$, we have an object denoted by $B_s$.
As a graded $R$-bimodule, $B_s = R\otimes_{R^s}R(1)$ where $(1)$ is the grading shift and $R^s = \{f\in R\mid s(f) = f\}$.
Then $B_s$ has a unique lift in $\mathcal{C}$ such that $(B_s)_Q^x = 0$ unless $x = e,s$.
An object of a form
\[
B_{s_1}\otimes B_{s_2}\otimes \cdots \otimes B_{s_l}(n)
\]
for $s_1,\ldots,s_l\in S$ and $n\in \Z$ is called a \emph{Bott-Samelson bimodule}.
Let $\BSbimod$ denote the category of Bott-Samelson bimodules.

In \cite{arXiv:1901.02336_accepted}, we proved that $\BSbimod$ gives a categorification of the Hecke algebra assuming the following.
We refer it as \cite[Assumption~3.2]{arXiv:1901.02336_accepted}.
\begin{quote}
Let $s,t\in S$, $s\ne t$ such that $m_{s,t}$ is finite.
Then there exists a morphism
\[
\overbrace{B_s\otimes B_t\otimes\cdots}^{m_{s,t}}
\to
\overbrace{B_t\otimes B_s\otimes\cdots}^{m_{s,t}}
\]
which sends $(1\otimes 1)\otimes (1\otimes 1)\otimes \cdots\otimes(1\otimes 1)$ to $(1\otimes 1)\otimes (1\otimes 1)\otimes \cdots\otimes(1\otimes 1)$.
\end{quote}

We introduce the following assumption.

\begin{assump}\label{assump:Assumption in Introduction}
For any $s,t\in S$ such that $m_{s,t} < \infty$, the two-colored quantum binomial coefficients $\sqbinom{m_{s,t}}{k}_X$ and $\sqbinom{m_{s,t}}{k}_Y$ are both zero for any $k = 1,\ldots, m_{s,t} - 1$.
\end{assump}
For the definition of two-colored quantum binomial coefficients, see \ref{subsec:two-colored quantum numbers} and \ref{subsec:Soergel bimodules}.
This assumption is related to the existence of Jones-Wenzl projectors. (See Proposition~\ref{prop:the assumptions} and \cite[Conjecture~6.23]{arXiv:2011.05432}.)
The main theorem of this paper is the following.

\begin{thm}[Theorem~\ref{thm:main theorem}]
Under Assumption~\ref{assump:Assumption in Introduction}, \cite[Assumption~3.2]{arXiv:1901.02336_accepted} holds.
\end{thm}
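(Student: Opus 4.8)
The plan is as follows. First, one reduces to the rank-two case: the restriction of $(V,\{\alpha_s\}_{s\in S},\{\alpha_s^\vee\}_{s\in S})$ to $\{s,t\}$ is again a realization, with the same $R$ and $Q$, and since every tensor product of copies of $B_s$ and $B_t$ has $Q$-support inside the dihedral group $\langle s,t\rangle$, a morphism in $\mathcal C$ for $\langle s,t\rangle$ is literally a morphism in $\mathcal C$ for $(W,S)$. So assume $(W,S)$ is dihedral of order $2m$, $m=m_{s,t}$. Let $w_0$ be the longest element, $\underline{w}=(s,t,s,\dots)$ and $\underline{w}'=(t,s,t,\dots)$ its two reduced expressions (length $m$), and put $X=B_{s_1}\otimes\dots\otimes B_{s_m}$, $Y=B_{t_1}\otimes\dots\otimes B_{t_m}$, with distinguished generators $\mathbf{1}_X$ and $\mathbf{1}_Y$. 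As a graded left $R$-module $X\cong\bigoplus_{k=0}^{m}R(m-2k)^{\oplus\binom{m}{k}}$, and likewise for $Y$; so the minimal-degree component $X^{-m}=\Coef\cdot\mathbf{1}_X$ is free of rank one, and any degree-zero morphism $\varphi\colon X\to Y$ in $\mathcal C$ automatically sends $\mathbf{1}_X$ to $c\,\mathbf{1}_Y$ for some $c\in\Coef$. The content is therefore to exhibit one $\varphi$ with $c=1$.

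To build $\varphi$ I would work with the $Q$-linearizations, which is natural since a degree-zero morphism in $\mathcal C$ is the same datum as a degree-zero $W$-graded $Q$-bimodule map $X_Q=\bigoplus_x X_Q^x\to\bigoplus_x Y_Q^x=Y_Q$ that carries $X$ into $Y$. On the $w_0$-component both $X_Q^{w_0}$ and $Y_Q^{w_0}$ are free of rank one on $Q_{w_0}$, with distinguished generators the $w_0$-components of $\mathbf{1}_X$ and $\mathbf{1}_Y$; fixing $\varphi$ there to be the $Q$-bimodule isomorphism matching these generators is precisely the normalization $c=1$. The remaining components are built recursively in $m$, imitating the construction of the $2m$-valent vertex in the two-colored Temperley--Lieb calculus: one assembles $\varphi$ out of the elementary $R$-bimodule morphisms between Bott--Samuelson bimodules (the ``dot'' maps $R\to B_s(1)$ and $B_s\to R(1)$, and the trivalent vertices), exactly as the $2m$-valent vertex is obtained from the Jones--Wenzl projector $\mathrm{JW}_{m-1}$. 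Since these building blocks are honest morphisms, the only thing that can go wrong is the set of coefficients in the linear combination.

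Making those coefficients work is the main obstacle, and I expect it to be the technical heart of the paper: one must show the recursively defined $Q$-linear map is defined over $\Coef$ and carries the integral bimodule $X$ into $Y$. The denominators that would a priori occur in the Jones--Wenzl recursion are products of the two-colored quantum numbers $[k]_X,[k]_Y$ with $k<m$, but after regrouping the truly obstructing quantities are exactly the two-colored quantum binomial coefficients $\sqbinom{m}{k}_X$ and $\sqbinom{m}{k}_Y$ for $1\le k\le m-1$ --- since the realization forces $[m]_X=[m]_Y=0$, it is $\mathrm{JW}_{m-1}$ as a whole, rather than the individual quantum numbers, whose ``denominator'' governs the $2m$-valent vertex. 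Assumption~\ref{assump:Assumption in Introduction} makes these binomials vanish, which one checks forces the apparent denominators to cancel; integrality of the resulting map can then be verified component by component using the $\Delta$- and $\nabla$-filtrations of $X$ and $Y$ from \cite{arXiv:1901.02336}. Since $\varphi$ was built with $w_0$-component the distinguished isomorphism, it satisfies $\varphi(\mathbf{1}_X)=\mathbf{1}_Y$, which is \cite[Assumption~3.2]{arXiv:1901.02336}.
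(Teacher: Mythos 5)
Your general plan (reduce to rank two, define $\varphi$ first over $Q$, then check it preserves the integral form, with the two-colored quantum binomial coefficients as the only obstruction) is in the right spirit, but the specific construction you propose has a genuine gap that the paper is precisely designed to avoid. You propose to build $\varphi$ as the $2m$-valent vertex coming from the Jones--Wenzl projector $\mathrm{JW}_{m-1}$, and to argue that, after regrouping, the denominators coming from the Jones--Wenzl recursion are controlled by the binomials $\sqbinom{m}{k}_Z$ (equivalently, under Proposition~\ref{prop:the assumptions}, by the invertibility of $\sqbinom{m-1}{k}_Z$). But the implication ``$\sqbinom{m-1}{k}_Z$ invertible for all $k$ $\Rightarrow$ $\mathrm{JW}_{m-1}$ exists'' is not a theorem: it is exactly \cite[Conjecture~6.23]{arXiv:2011.05432}, and the paper explicitly flags this as open (see the discussion after Proposition~\ref{prop:the assumptions}). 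The Jones--Wenzl recursion a priori needs each $[k]_Z$ invertible, and under the hypotheses here these can vanish; there is no known regrouping argument that reduces the existence of $\mathrm{JW}_{m-1}$ to binomial invertibility alone. So the ``one checks the apparent denominators cancel'' step is not a calculation you can simply carry out --- it is the missing theorem.

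The paper takes a different route that bypasses Jones--Wenzl projectors altogether. It writes down $\varphi_Q$ in closed form (the matrix coefficients $G_e^f = \pi_{\underline{x}}/\zeta_{\underline{y}}(f)$ between the $e$- and $f$-components, following \cite[2.6]{arXiv:2011.05432}), then proves an explicit formula in the rank-two universal Coxeter group for the relevant sums $a^{\underline{w}}(g) = k_g^{\underline{w}}/\prod_{\alpha\in X_g^w}\alpha$ (Theorem~\ref{thm:image of 1}). Integrality is then verified component-by-component via Lemma~\ref{lem:criterion in M}, which gives a direct two-condition criterion $(m_1,m_2)\in M\otimes B_u \iff m_1\alpha_u\in M$ and $m_1 - m_2\in M$, and an induction (Lemma after Corollary in Section 3) reduces everything to the binomial identities of Proposition~\ref{prop:the assumptions}. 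The $\Delta$/$\nabla$-filtration machinery you mention does not appear; the criterion in Lemma~\ref{lem:criterion in M} is what replaces it. Your observation that the lowest-degree piece of $B_{\underline{x}}$ is $\Coef\cdot b_{\underline{x}}$, so any degree-zero morphism sends $b_{\underline{x}}$ to a scalar multiple of $b_{\underline{y}}$, is correct but unused in the paper; instead Proposition~\ref{prop:1 -> 1} computes the scalar directly and shows it is $1$ exactly when the assumption holds.
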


Note that Assumption~\ref{assump:Assumption in Introduction} is a very mild condition.
For example, if a realization comes from a root system then it is always satisfied (Proposition~\ref{prop:assumption, in root system}).

\subsection{Diagrammatic category}
Let $\mathcal{D}$ be the diagrammatic Hecke category defined in \cite{MR3555156}.
We assume that the category $\mathcal{D}$ is ``well-defined''~\cite[5.1]{arXiv:2011.05432}.
In \cite{MR3555156}, under some assumptions~\cite[5.3]{arXiv:2011.05432}, a functor $\mathcal{F}$ from $\mathcal{D}$ to $\BSbimod$ is constructed.
The construction of $\mathcal{F}$ is deeply related to \cite[Assumption~3.2]{arXiv:1901.02336_accepted} as we explain here.

The morphisms in the category $\mathcal{D}$ are defined by generators and relations.
So to define $\mathcal{F}$, we have to define the images of generators.
Except the generators called $2m_{s,t}$-valent vertices ($s,t\in S$), the images of generators are given easily.
For $2m_{s,t}$-valent vertices, the images should be morphisms in \cite[Assumption~3.2]{arXiv:1901.02336_accepted}.
Hence, to prove \cite[Assumption~3.2]{arXiv:1901.02336_accepted} is almost equivalent to the construction of $\mathcal{F}$.
Therefore as a consequence of our main theorem, we can prove the following.
\begin{thm}
Under Assumption~\ref{assump:Assumption in Introduction}, the category $\mathcal{D}$ is equivalent to $\BSbimod$.
\end{thm}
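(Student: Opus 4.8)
The plan is to obtain the comparison functor $\mathcal{F}\colon\mathcal{D}\to\BSbimod$ from the main theorem and then to recognise it as an equivalence by a morphism-space count. By Theorem~\ref{thm:main theorem}, Assumption~\ref{assump:Assumption in Introduction} gives \cite[Assumption~3.2]{arXiv:1901.02336}: for each $s,t\in S$ with $m_{s,t}<\infty$ there is a morphism
\[
\overbrace{B_s\otimes B_t\otimes\cdots}^{m_{s,t}}\to\overbrace{B_t\otimes B_s\otimes\cdots}^{m_{s,t}}
\]
carrying $(1\otimes1)\otimes\cdots\otimes(1\otimes1)$ to $(1\otimes1)\otimes\cdots\otimes(1\otimes1)$. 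As recalled before the statement, this is exactly the data --- the image of the $2m_{s,t}$-valent vertex --- that the construction of $\mathcal{F}$ in \cite{MR3555156} lacks; the images of the remaining generators of $\mathcal{D}$ are written down directly (see the discussion above), and that the resulting assignment respects every defining relation of $\mathcal{D}$, the delicate ones being the Zamolodchikov/rotation relations involving the $2m_{s,t}$-valent vertices, is precisely what \cite{MR3555156} establishes under the hypotheses \cite[5.1, 5.3]{arXiv:2011.05432} --- hypotheses that hold for our realization. One thus obtains a graded $\Coef$-linear monoidal functor $\mathcal{F}\colon\mathcal{D}\to\BSbimod$ with $\mathcal{F}(B_s)=B_s$, which sends the object of $\mathcal{D}$ attached to a word $(s_1,\dots,s_l)$ with shift $n$ to $B_{s_1}\otimes\cdots\otimes B_{s_l}(n)$.

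Essential surjectivity of $\mathcal{F}$ is then immediate, since every object of $\BSbimod$ has this last form. For full faithfulness I would compare morphism spaces through the Hecke algebra $\mathcal{H}$ of $(W,S)$. Writing $b_{\underline{x}}=b_{s_1}\cdots b_{s_l}$ for the element of $\mathcal{H}$ attached to a word $\underline{x}=(s_1,\dots,s_l)$ and $\langle\ ,\ \rangle$ for the standard form on $\mathcal{H}$, both categories satisfy the Soergel Hom formula: $\Hom_{\mathcal{D}}(\underline{x},\underline{y})$ is free as a graded right $R$-module of graded rank $\langle b_{\underline{x}},b_{\underline{y}}\rangle$ by \cite{MR3555156}, and $\Hom_{\BSbimod}(\underline{x},\underline{y})$ is free as a graded right $R$-module of the same graded rank by \cite{arXiv:1901.02336}. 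In particular, in each fixed degree both are free $\Coef$-modules of the same finite rank. Since $\mathcal{F}$ is graded and $\Coef$-linear, the induced map $\mathcal{F}_*\colon\Hom_{\mathcal{D}}(\underline{x},\underline{y})\to\Hom_{\BSbimod}(\underline{x},\underline{y})$ is degree-preserving and $\Coef$-linear, so it is bijective as soon as it is surjective in every degree.

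For that surjectivity, recall that $\Hom_{\mathcal{D}}(\underline{x},\underline{y})$ is spanned by the double-leaves morphisms of \cite{MR3555156}, which are built by a fixed recursive recipe out of the elementary diagrammatic morphisms. Since $\mathcal{F}$ carries each elementary diagrammatic morphism to the corresponding elementary morphism of $\BSbimod$ --- by construction for the generators other than the $2m_{s,t}$-valent vertices, and by the normalisation of \cite[Assumption~3.2]{arXiv:1901.02336} chosen when defining $\mathcal{F}$ for those --- the morphisms $\mathcal{F}_*(\text{double leaves})$ are produced by the very same recursive recipe inside $\BSbimod$; they therefore coincide with the light-leaves spanning set of $\Hom_{\BSbimod}(\underline{x},\underline{y})$ constructed in \cite{arXiv:1901.02336}. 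Hence $\mathcal{F}_*$ is surjective, so bijective, $\mathcal{F}$ is fully faithful, and therefore an equivalence. (Alternatively, once $\mathcal{F}$ is available, one may instead invoke the known comparison results \cite{MR3805034,arXiv:2004.09014}.)

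The main obstacle is not this last bookkeeping but the well-definedness of $\mathcal{F}$: what is required is that the morphisms furnished by \cite[Assumption~3.2]{arXiv:1901.02336} can be \emph{chosen} so that \emph{all} relations of $\mathcal{D}$ hold, not merely that each exists. This is the precise sense in which constructing $\mathcal{F}$ is ``almost equivalent'' to \cite[Assumption~3.2]{arXiv:1901.02336}, and it is why the substantive work sits in Theorem~\ref{thm:main theorem} together with the relation-checking of \cite{MR3555156}; the secondary subtlety --- matching the diagrammatic double leaves with the light leaves of \cite{arXiv:1901.02336} so as to transfer the spanning set --- is formal once the normalisations are tracked carefully.
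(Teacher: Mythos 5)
The end of the argument (essential surjectivity plus the double-leaves comparison for full faithfulness) is sound and matches the paper: $\mathcal{F}$ carries each double-leaf morphism in $\mathcal{D}$ to the corresponding double-leaf in $\BSbimod$, and both families are bases of the respective Hom-spaces, so $\mathcal{F}$ induces isomorphisms on morphisms. Your detour through the graded-rank formula is unnecessary but harmless.

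The genuine gap is in the well-definedness of $\mathcal{F}$, which you correctly flag as the substantive issue but then dispose of too quickly. You write that the verification that all defining relations of $\mathcal{D}$ hold ``is precisely what \cite{MR3555156} establishes under the hypotheses \cite[5.1, 5.3]{arXiv:2011.05432} --- hypotheses that hold for our realization.'' That last clause is the problem: the sufficient condition \cite[5.3]{arXiv:2011.05432} under which \cite{MR3555156} carries out the relation-checking is strictly stronger than Assumption~\ref{assump:Assumption in Introduction}, and the whole point of this paper (as stated in the introduction) is to work under the weaker assumption. You are not entitled to quote the relation-checking of \cite{MR3555156} as a black box here. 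The paper's proof avoids this by a different mechanism: it does not verify the relations of $\mathcal{D}$ directly inside $\BSbimod$ at all. Instead it uses the localized functor $\Lambda\colon\mathcal{D}\to\mathcal{C}_Q$ of \cite{arXiv:2011.05432}, which is proved well-defined there assuming only that $\mathcal{D}$ itself is well-defined (relations over the field of fractions $Q$ are far easier to check than over $R$). One observes that $\Lambda = (\cdot)_Q\circ\mathcal{F}$ by construction, and that the localization functor $(\cdot)_Q\colon\BSbimod\to\mathcal{C}_Q$ is faithful because $M\to M\otimes_R Q$ is injective. Faithfulness then transports the well-definedness of $\Lambda$ back to $\mathcal{F}$. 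Without this localized-calculus step your construction of $\mathcal{F}$ is not justified under Assumption~\ref{assump:Assumption in Introduction}, and the rest of your argument has nothing to run on.
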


\subsection{Localized calculus}
In the proof, we use localized calculus.
Ideas of localized calculus are found in \cite{MR3555156,arXiv:1901.02336_accepted} and more systematic treatment recently appeared in \cite{arXiv:2011.05432}.

Let $\mathcal{C}_Q$ be the category of $(P^x)_{x\in W}$ where $P^x$ is a $Q$-bimodule such that $mp = x(p)m$ for $p\in Q$ and $m\in P^x$.
A morphism $(P_1^x)_{x\in W}\to (P_2^x)_{x\in W}$ is $(\varphi_x)_{x\in W}$ where $\varphi_x\colon P_1^x\to P_2^x$ is a $Q$-bimodule homomorphism for any $x\in W$.
Then for $M\in \mathcal{C}$, $(M^x_Q)_{x\in W}\in \mathcal{C}_Q$.
We denote this object by $M_Q$.
For $M,N\in \mathcal{C}$ and a morphism $\varphi\colon M\to N$, we have a morphism $\varphi_Q\colon M_Q\to N_Q$.
Conversely, assume that $\varphi_Q\colon M_Q\to N_Q$ is given and if $\varphi_Q$ sends $M\subset M\otimes_R Q = \bigoplus_{x\in W}M_Q^x$ to $N$, then the restriction of $\varphi_Q$ to $M$ gives a morphism $M\to N$ in $\mathcal{C}$.

Let $M,N$ be two Bott-Samelson bimodules in \cite[Assumption~3.2]{arXiv:1901.02336_accepted}.
A candidate of $\varphi_Q\colon M_Q\to N_Q$ is given in \cite{arXiv:2011.05432}.
The hardest part is to prove that $\varphi_Q$ sends $M$ to $N$.

We check that $\varphi_Q$ gives a desired homomorphism by calculations.
One of the things which we need to prove is the following.
Let $s,t\in S$ such that $m_{s,t} < \infty$.
For simplicity, assume that $V$ is balanced, namely $[m_{s,t} - 1]_X = [m_{s,t} - 1]_Y = 1$.
Let $s_1\cdots s_{m_{s,t}}$ be a reduced expression of the longest element in the group $\langle s,t\rangle$ generated by $\{s,t\}$.
Then for any $g\in \langle s,t\rangle$, we have
\begin{equation}\label{eq:a in Introduction}
\sum_{e = (e_i)\in \{0,1\}^{m_{s,t}},s_1^{e_1}\cdots s_{m_{s,t}}^{e_{m_{s,t}}} = g}
\prod_{i = 1}^{m_{s,t}}s_1^{e_1}\cdots s_{i - 1}^{e_{i - 1}}\left(\frac{1}{\alpha_{s_i}}\right)
 = \frac{1}{\displaystyle\prod_{i = 1}^{m_{s,t}}s_1\cdots s_{i - 1}(\alpha_{s_{i}})}.
\end{equation}
(If $V$ comes from a root system, then this formula can be proved by applying the localization formula to the Bott-Samelson resolution of the flag variety. The author learned this from Syu Kato.)

In Section~\ref{sec:A calculation in the universal Coxeter system of rank two}, we calculate the left hand side of \ref{eq:a in Introduction}.
Moreover, we give an explicit formula of the left hand side for any sequence $(s_1,s_2,\ldots)$ of $\{s,t\}$.
The hardest part of this calculation is to find a correct result.
Once we find the correct formulation, the proof is done by induction.

For a general element $m\in M$, we first give a formula to express $\varphi_Q(m)$ using the left hand side of \eqref{eq:a in Introduction} (with any $s_1,s_2,\ldots$).
We also have an algorithm to check $\varphi_Q(m)\in N$ (Lemma~\ref{lem:criterion in M}).
In Section~\ref{sec:A homomorphism between Bott-Samelson bimodules}, using this algorithm and an explicit formula obtained in Section~\ref{sec:A calculation in the universal Coxeter system of rank two}, we prove the main theorem.

\subsection{On Assumption~\ref{assump:Assumption in Introduction}}
In \cite{arXiv:1901.02336_accepted}, a sufficient condition for \cite[Assumption~3.2]{arXiv:1901.02336_accepted} was given.
In \cite{MR3555156}, a sufficient condition for the existence of $\mathcal{F}$ was given.
Both conditions are stronger than Assumption~\ref{assump:Assumption in Introduction}.
It was expected that these theorems are proved under the weaker condition \cite[Remark~5.6]{arXiv:2011.05432} but concrete conditions were not known.

In this paper, we prove these theorems under Assumption~\ref{assump:Assumption in Introduction}.
Moreover, we prove that the theorems are almost equivalent to Assumption~\ref{assump:Assumption in Introduction}.
More precisely, we prove the following.
Let $\varphi_Q\colon M_Q\to N_Q$ be the morphism in $\mathcal{C}_Q$ introduced above and $\psi_Q\colon  N_Q\to M_Q$ the morphism obtaining by the same way as $\varphi_Q$.
Then $\varphi_Q$ and $\psi_Q$ give desired morphisms if and only if Assumption~\ref{assump:Assumption in Introduction} holds (Proposition~\ref{prop:1 -> 1}).
Therefore the author thinks that Assumption~\ref{assump:Assumption in Introduction} is the final form in this direction

\subsection*{Acknowledgments}
The author thanks Syu Kato for giving many helpful comments.
The author was supported by JSPS KAKENHI Grant Number 18H01107.

\section{A calculation in the universal Coxeter system of rank two}\label{sec:A calculation in the universal Coxeter system of rank two}
Since our main theorem is concerned with a rank two Coxeter system, in almost all part of this paper, we only consider a Coxeter system of rank two.
In this section, we give an explicit formula of the left hand side of \eqref{eq:a in Introduction}.
Such formula can be proved in a universal form.
Hence we work with the universal Coxeter system of rank two in this section.

\subsection{Two-colored quantum numbers}\label{subsec:two-colored quantum numbers}
In this subsection we introduce two-colored quantum numbers \cite{MR3462556,MR3555156}.
Let $\Z[X,Y]$ be the polynomial ring with two variables over $\Z$.
\begin{defn}[{two-colored quantum numbers, \cite[Definition~3.6]{MR3555156}}]
For $n\in\Z_{\ge 0}$, we define $[n]_X,[n]_Y\in \Z[X,Y]$ by
\begin{align*}
& [0]_X = [0]_Y = 0,\quad [1]_X = [1]_Y = 1,\\
& [n + 1]_X = X[n]_Y - [n - 1]_X,\\
& [n + 1]_Y = Y[n]_Y - [n - 1]_Y.
\end{align*}
\end{defn}
Note that $[2]_X = X$ and $[2]_Y = Y$.
Define $\sigma\colon \{X,Y\}\to \{X,Y\}$ by $\sigma(X) = Y$ and $\sigma(Y) = X$.
Then for $Z\in \{X,Y\}$, we have
\[
[n + 1]_Z = [2]_Z[n]_{\sigma(Z)} - [n - 1]_Z.
\]

We prove some properties of these polynomials which we will use later.
Some of them are known well or immediately follow from known results.
We give proofs for the sake of completeness.

\begin{lem}\label{lem:quantum num, even, odd}
Let $n\in\Z_{\ge 0}$.
\begin{enumerate}
\item If $n$ is odd, then $[n]_X = [n]_Y$.\label{lem:enum:quantum num, odd}
\item If $n$ is even, $[n]_X/X,[n]_Y/Y\in \Z[X,Y]$ and $[n]_X/X = [n]_Y/Y$.\label{lem:enum:quantum num, even}
\item We have $[n]_{Z} = [n]_{\sigma^n(Z)}$ for $Z\in \{X,Y\}$.
\item We have $[n]_X,[n]_Y\ne 0$ if $n > 0$.\label{lem:enum:quantum num, shift}
\end{enumerate}
\end{lem}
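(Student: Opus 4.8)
The plan is to prove all four parts simultaneously by induction on $n$, since the recursions for $[n]_X$ and $[n]_Y$ couple together and each statement feeds the induction for the others. First I would set up the base cases $n=0,1,2$ directly from the definition: for $n=0$ both are zero, for $n=1$ both equal $1$ (so part~(1) holds, being odd), and $[2]_X=X$, $[2]_Y=Y$ (so $[2]_X/X=[2]_Y/Y=1$, giving part~(2) for $n=2$). For part~(3), note $\sigma^n(Z)=Z$ when $n$ is even and $\sigma^n(Z)=\sigma(Z)$ when $n$ is odd, so (3) is just a restatement: it says $[n]_Z=[n]_Z$ for $n$ even (trivial) and $[n]_X=[n]_Y$ for $n$ odd, i.e.\ (3) is equivalent to (1) together with a triviality. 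So really I only need to establish (1), (2), (4).

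For the inductive step of (1) and (2), I would split on the parity of $n+1$. Suppose $n+1$ is odd, so $n$ is even and $n-1$ is odd. Then $[n+1]_X = X[n]_Y - [n-1]_X$ and $[n+1]_Y = Y[n]_Y - [n-1]_Y$. Hmm — wait, the recursion for $[n+1]_X$ uses $[n]_Y$ (not $[n]_X$), which is the point of the two-colored definition. Using the inductive hypothesis (1) for $n-1$ (odd) gives $[n-1]_X = [n-1]_Y$, and using (2) for $n$ (even) gives $[n]_Y = (Y/X)[n]_X$, equivalently $X[n]_Y = Y[n]_X$... let me instead use it as $[n]_X/X = [n]_Y/Y$, so $[n]_Y = \frac{Y}{X}[n]_X$ and $[n]_X = \frac{X}{Y}[n]_Y$. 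Then $[n+1]_X = X[n]_Y - [n-1]_X$ and $[n+1]_Y = Y[n]_Y - [n-1]_Y = Y[n]_Y - [n-1]_X$; comparing, $[n+1]_X - [n+1]_Y = X[n]_Y - Y[n]_Y = (X-Y)[n]_Y$, which is \emph{not} obviously zero. So I should instead use the symmetric form of the recursion $[n+1]_Z = [2]_Z[n]_{\sigma(Z)} - [n-1]_Z$ more carefully, or rather observe that when $n$ is even, $[n]_Y/Y = [n]_X/X$ is a polynomial $p_n$, and then $[n+1]_X = X \cdot (Y/X)\cdot? $ — cleaner: write $[n]_Y = Y\,p_n$ and $[n]_X = X\, p_n$. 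Then $[n+1]_X = X\cdot Y p_n - [n-1]_X$ and $[n+1]_Y = Y \cdot Y p_n - [n-1]_Y$. Hmm, these still differ. I think the resolution is that the recursion for $[n+1]_X$ should be read as using the \emph{$\sigma(X)=Y$-colored} predecessor, and one proves by induction the combined statement that $[n]_X$ and $[n]_Y$ are obtained from each other by swapping $X\leftrightarrow Y$ (part (3) in its genuine content, which I'd phrase as: $[n]_X(X,Y) = [n]_Y(Y,X)$). That symmetry is immediate by induction from the symmetric recursion, and then (1) follows because for $n$ odd the polynomial $[n]_X$ turns out to be symmetric in $X,Y$ — which one checks by showing the odd-indexed ones are polynomials in $XY$ and $X+Y$, or directly.

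Given the subtlety, the honest plan is: (a) first prove the $X\leftrightarrow Y$ swap symmetry $[n]_X(X,Y)=[n]_Y(Y,X)$ by a one-line induction from the symmetric recursion; (b) prove by induction that $[n]_X$ for $n$ odd is symmetric in $X$ and $Y$ and $[n]_X/X$ for $n$ even is symmetric in $X$ and $Y$ — the inductive step uses the recursion and the previous two cases, tracking parities; (c) combine (a) and (b) to get (1), (2), (3); (d) for (4), argue that $[n]_X \ne 0$ by, e.g., specializing $X=Y=q+q^{-1}$ to recover the ordinary quantum integer $[n]_q = q^{n-1}+q^{n-3}+\cdots+q^{1-n}$, which is a nonzero element of $\Z[q,q^{-1}]$ for $n>0$; hence $[n]_X$ cannot be the zero polynomial, and likewise $[n]_Y$. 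The main obstacle is step (b): getting the parity bookkeeping right so that the divisibility $X \mid [n]_X$ for even $n$ and the symmetry both propagate cleanly through the coupled recursion; everything else is routine once that is in place.
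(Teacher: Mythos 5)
Your proposal is correct in substance, but you got derailed by what is in fact a typo in the printed definition, and the detour you took is more elaborate than what the paper does. The printed line $[n+1]_Y = Y[n]_Y - [n-1]_Y$ is inconsistent with the very next sentence of the paper, which says $[n+1]_Z = [2]_Z[n]_{\sigma(Z)} - [n-1]_Z$ for $Z \in \{X,Y\}$; taking $Z = Y$ this gives $[n+1]_Y = Y[n]_X - [n-1]_Y$, which is the intended (Elias--Williamson) recursion. With that corrected, the coupled induction on (1) and (2) closes immediately: if $n+1$ is odd, write $[n]_X = Xp_n$, $[n]_Y = Yp_n$ by (2) for $n$, so $[n+1]_X = XYp_n - [n-1]_X$ and $[n+1]_Y = YXp_n - [n-1]_Y$, which agree by (1) for $n-1$; if $n+1$ is even, (1) for $n$ gives $[n]_X = [n]_Y$ and (2) for $n-1$ gives $[n-1]_X = Xp_{n-1}$, $[n-1]_Y = Yp_{n-1}$, so $[n+1]_X = X([n]_Y - p_{n-1})$, $[n+1]_Y = Y([n]_X - p_{n-1})$ with equal quotients. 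This is exactly what the paper means by ``follows from the definition using induction.'' Your spot where you write $[n+1]_Y = Y \cdot Yp_n - [n-1]_Y$ reproduces the typo; with $Y[n]_X$ in place of $Y[n]_Y$ the ``these still differ'' objection disappears.

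That said, the route you retreat to in (a)--(c) --- proving the swap symmetry $[n]_X(X,Y) = [n]_Y(Y,X)$ and then that $[n]_X$ is $X\!\leftrightarrow\!Y$-symmetric for odd $n$ while $[n]_X/X$ is a symmetric polynomial for even $n$ --- is a valid, if heavier, alternative; the parity bookkeeping you flag as the main obstacle does work out. Your treatment of (3) as a trivial reformulation of (1) matches the paper exactly. For (4) you specialize $X = Y = q + q^{-1}$ to recover the classical quantum integer; the paper specializes $X = Y = 2$ to recover $n$ itself, which is a marginally cleaner instance of the same idea (any specialization where the image is visibly nonzero works). Net: no genuine gap, a correct but longer-than-necessary proof of (1)--(2) caused by the typo, and essentially the paper's argument for (3)--(4).
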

\begin{proof}
The first two statements follow from the definition using induction.
For the third, if $n$ is odd then it follows from (1).
If $n$ is even then it is obvious.
We also have $[n]_X(2,2) = [n]_Y(2,2) = n$ which follows easily by induction.
Hence $[n]_X,[n]_Y\ne 0$.
\end{proof}

An obvious consequence of (\ref{lem:enum:quantum num, odd}) (\ref{lem:enum:quantum num, even}) which will be used several times in this paper is the following.
For $k_1,\ldots,k_r,l_1,\ldots,l_s\in\Z_{> 0}$ such that $\#(2\Z\cap \{k_1,\ldots,k_r\}) = \#(2\Z\cap \{l_1,\ldots,l_s\})$, then $([k_1]_X\cdots [k_r]_X)/([l_1]_X\cdots [l_s]_X) = ([k_1]_Y\cdots [k_r]_Y)/([l_1]_Y\cdots [l_s]_Y)$.

\begin{lem}\label{lem:quantum, (n + 1)(m + 1)-nm=n+m+1}
Let $m,n\in\Z_{\ge 0}$ and $Z\in \{X,Y\}$.
Then we have
\[
[m + n + 1]_{\sigma^n(Z)} = [m + 1]_{Z}[n + 1]_{\sigma(Z)} - [m]_{\sigma(Z)}[n]_{Z}.
\]
\end{lem}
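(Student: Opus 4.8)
The plan is to prove the identity by induction on $n$, keeping $m$ and $Z$ arbitrary throughout. Everything rests on two ingredients: the unified recurrence $[N+1]_W = [2]_W[N]_{\sigma(W)} - [N-1]_W$ (for $W\in\{X,Y\}$ and $N\ge 1$), and the parity statements of Lemma~\ref{lem:quantum num, even, odd}, which I will use in the form ``$[2]_{\sigma(W)}[N]_W = [2]_W[N]_{\sigma(W)}$ when $N$ is even'' (from part~(2)) and ``$[N]_W = [N]_{\sigma(W)}$ when $N$ is odd'' (from part~(1)). These two facts together let one transport a factor of $[2]$ past a bracket at the cost of toggling the colour, which is exactly what is needed to reconcile the colours $\sigma^n(Z)$ occurring on the two sides.

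For the base of the induction I would check $n=0$ and $n=1$ directly: for $n=0$ the right-hand side collapses to $[m+1]_Z$ since $[1]_{\sigma(Z)}=1$ and $[0]_Z=0$; for $n=1$ it reads $[m+1]_Z[2]_{\sigma(Z)} - [m]_{\sigma(Z)}$, which is precisely the recurrence expressing $[m+2]_{\sigma(Z)}$. Two base cases are needed because the inductive step will call on the statement for both $n$ and $n-1$.

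For the inductive step, assuming the identity for $n-1$ and $n$ (all $m$, all $Z$), I would apply the recurrence to the left-hand side $[m+n+2]_{\sigma^{n+1}(Z)}$, writing it as $[2]_{\sigma^{n+1}(Z)}\,[m+n+1]_{\sigma^{n}(Z)} - [m+n]_{\sigma^{n+1}(Z)}$ (using $\sigma^{n+2}=\sigma^n$). The first bracket is rewritten by the case $n$ of the hypothesis, and the second by the case $n-1$ applied with first index $m$ and colour $Z$ (legitimate since $\sigma^{n-1}(Z)=\sigma^{n+1}(Z)$). Collecting the coefficients of $[m+1]_Z$ and of $[m]_{\sigma(Z)}$, it then remains to verify the two scalar identities $[2]_{\sigma^{n+1}(Z)}[n+1]_{\sigma(Z)} - [n]_{\sigma(Z)} = [n+2]_{\sigma(Z)}$ and $[2]_{\sigma^{n+1}(Z)}[n]_Z - [n-1]_Z = [n+1]_Z$; each follows from the recurrence once the leading $[2]$ has been moved to the right colour by the parity facts above, splitting into the cases $n$ even and $n$ odd.

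The only genuine subtlety, and the step I expect to be the main obstacle, is exactly this colour bookkeeping: $\sigma^{n+1}(Z)$ equals $Z$ or $\sigma(Z)$ according to the parity of $n$, and since $[2]_X = X \ne Y = [2]_Y$ one cannot ignore it — parts~(1) and~(2) of Lemma~\ref{lem:quantum num, even, odd} must be fed in precisely to make the two colourings agree. Once the bookkeeping is in place the rest is a routine rearrangement of the recurrence. (Alternatively, the same induction runs on $m$ with base cases $m=0,1$, using Lemma~\ref{lem:quantum num, even, odd}(3) to identify $[n+1]_{\sigma^{n}(Z)}=[n+1]_{\sigma(Z)}$ in the base case; this variant is entirely parallel.)
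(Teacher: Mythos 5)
Your proposal is correct and follows essentially the same route as the paper: induction on $n$ with base cases $n=0,1$, applying the three-term recurrence to the left side and substituting the inductive hypotheses for $n$ and $n-1$ (the paper writes the step as going from $n-2,n-1$ to $n$, which is the same reindexed). The only cosmetic difference is that the paper reconciles the colours by invoking the consolidated shift identity $[N]_W = [N]_{\sigma^N(W)}$ from Lemma~\ref{lem:quantum num, even, odd}(3) rather than splitting into the even/odd cases of parts (1) and (2) as you do; since (3) is exactly the packaging of (1) and (2), the arguments coincide.
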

\begin{proof}
We prove by induction on $n$.
The cases of $n = 0$ and $n = 1$ follow from the definitions.
Assume that the lemma holds for $n - 1,n - 2$.
Then 
\begin{align*}
[m + n + 1]_{\sigma^{n}(Z)} & = [2]_{\sigma^n(Z)}[m + n]_{\sigma^{n + 1}(Z)} - [m + n - 1]_{\sigma^n(Z)}\\
& = [2]_{\sigma^n(Z)}([m + 1]_{Z}[n]_{\sigma(Z)} - [m]_{\sigma(Z)}[n - 1]_{Z}) \\
& \quad - ([m + 1]_{Z}[n - 1]_{\sigma(Z)} - [m]_{\sigma(Z)}[n - 2]_{Z})\\
& = [m + 1]_{Z}([2]_{\sigma^n(Z)}[n]_{\sigma(Z)} - [n - 1]_{\sigma(Z)})\\
& \quad - [m]_{\sigma(Z)}([2]_{\sigma^n(Z)}[n - 1]_{Z} - [n - 2]_{Z}).
\end{align*}
By Lemma~\ref{lem:quantum num, even, odd} (\ref{lem:enum:quantum num, shift}), we have $[n]_{\sigma(Z)} = [n]_{\sigma^{n + 1}(Z)}$ and $[n - 1]_{\sigma(Z)} = [n - 1]_{\sigma^{n}(Z)}$.
Hence $[2]_{\sigma^n(Z)}[n]_{\sigma(Z)} - [n - 1]_{Z} = [2]_{\sigma^n(Z)}[n]_{\sigma^{n + 1}(Z)} - [n - 1]_{\sigma^{n}(Z)} = [n + 1]_{\sigma^n(Z)} = [n + 1]_{\sigma(Z)}$.
In the last we used Lemma~\ref{lem:quantum num, even, odd} (\ref{lem:enum:quantum num, shift}) again.
Similarly we have $[2]_{\sigma^n(Z)}[n - 1]_{Z} - [n - 2]_{Z} = [2]_{\sigma^n(Z)}[n - 1]_{\sigma^{n + 1}(Z)} - [n - 2]_{\sigma^n(Z)} = [n]_{\sigma^n(Z)} = [n]_Z$.
\end{proof}

\begin{lem}\label{lem:quantum, (m + n + 1)n - (m + n)(n  +1) = -m}
Let $m,n\in \Z_{\ge 0}$ and $Z\in \{X,Y\}$.
Then we have
\[
[m]_{\sigma^{n}(Z)} = [m + n]_{Z}[n + 1]_{\sigma(Z)} - [m + n + 1]_{\sigma(Z)}[n]_{Z}.
\]
\end{lem}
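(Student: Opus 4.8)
The plan is to prove the identity by induction on $n$ carried out in steps of two. Fix $m$ and $Z$ and regard the right-hand side as a function of $n$, writing $f(n) = [m+n]_Z[n+1]_{\sigma(Z)} - [m+n+1]_{\sigma(Z)}[n]_Z$. Since $\sigma$ has order two, the left-hand side $[m]_{\sigma^n(Z)}$ depends only on the parity of $n$: it equals $[m]_Z$ when $n$ is even and $[m]_{\sigma(Z)}$ when $n$ is odd. So it suffices to prove that (i) $f(0) = [m]_Z$ and $f(1) = [m]_{\sigma(Z)}$, and (ii) $f(n) = f(n-2)$ for all $n \ge 2$. Here (ii) is a bare algebraic identity that needs no inductive hypothesis, so the ``induction'' amounts to no more than the observation that (i) and (ii) determine $f(n)$ for every $n \ge 0$.

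First I would record (i). For $n=0$, using $[0]=0$ and $[1]=1$, one gets $f(0) = [m]_Z[1]_{\sigma(Z)} - [m+1]_{\sigma(Z)}[0]_Z = [m]_Z$. For $n=1$, applying the defining recursion $[k+1]_Z = [2]_Z[k]_{\sigma(Z)} - [k-1]_Z$ to $[m+2]_{\sigma(Z)}$ gives $[m+2]_{\sigma(Z)} = [2]_{\sigma(Z)}[m+1]_{\sigma^2(Z)} - [m]_{\sigma(Z)} = [2]_{\sigma(Z)}[m+1]_Z - [m]_{\sigma(Z)}$, hence $f(1) = [m+1]_Z[2]_{\sigma(Z)} - [m+2]_{\sigma(Z)}[1]_Z = [m]_{\sigma(Z)}$.

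For (ii), let $n \ge 2$; since $m \ge 0$, every quantum number that appears below has a nonnegative argument, so all the rewrites are legitimate. The computation runs in two rounds. In the first round I substitute $[n+1]_{\sigma(Z)} = [2]_{\sigma(Z)}[n]_Z - [n-1]_{\sigma(Z)}$ and $[m+n+1]_{\sigma(Z)} = [2]_{\sigma(Z)}[m+n]_Z - [m+n-1]_{\sigma(Z)}$ into $f(n)$ and expand; the two terms proportional to $[2]_{\sigma(Z)}$ are $\pm[2]_{\sigma(Z)}[m+n]_Z[n]_Z$ and cancel, leaving $f(n) = [m+n-1]_{\sigma(Z)}[n]_Z - [m+n]_Z[n-1]_{\sigma(Z)}$. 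In the second round I substitute $[m+n]_Z = [2]_Z[m+n-1]_{\sigma(Z)} - [m+n-2]_Z$ and $[n]_Z = [2]_Z[n-1]_{\sigma(Z)} - [n-2]_Z$ and expand; this time the two terms proportional to $[2]_Z$ cancel, leaving $f(n) = [m+n-2]_Z[n-1]_{\sigma(Z)} - [m+n-1]_{\sigma(Z)}[n-2]_Z$, which is exactly $f(n-2)$. Together with (i) this gives $f(n) = [m]_{\sigma^n(Z)}$ for every $n \ge 0$, which is the assertion.

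I do not expect a genuine obstacle: this lemma is the ``subtraction-formula'' counterpart of Lemma~\ref{lem:quantum, (n + 1)(m + 1)-nm=n+m+1}, and committing to step $n$ by two is precisely what keeps the two-color bookkeeping painless; in particular the argument uses only $\sigma^2 = \id$ and the defining recursion, never the color-shift identity of Lemma~\ref{lem:quantum num, even, odd}. The one point deserving a moment's care is the standing hypothesis $m,n \ge 0$: one has to check, as indicated above, that the two rounds of rewriting never invoke a quantum number with a negative index, which holds as soon as $n \ge 2$ — and the remaining values $n \in \{0,1\}$ are exactly the base cases in (i).
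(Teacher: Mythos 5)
Your proof is correct, and it takes a genuinely different route from the paper's. The paper's argument applies Lemma~\ref{lem:quantum, (n + 1)(m + 1)-nm=n+m+1} twice, once to the term $[m+n+1]_{\sigma(Z)}[n]_Z$ (producing $[m+2n]_{\sigma^n(Z)}$) and once to $[m+n]_Z[n+1]_{\sigma(Z)}$ (producing $[m+2n]_{\sigma^{m+n}(Z)}$), then invokes Lemma~\ref{lem:quantum num, even, odd} to identify the two $[m+2n]$-terms; subtracting kills them and yields a step-one recursion with a color swap, $f_Z(n)=f_{\sigma(Z)}(n-1)$, which telescopes to the single base case $f_{\sigma^n(Z)}(0)=[m]_{\sigma^n(Z)}$. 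You instead bypass both Lemma~\ref{lem:quantum, (n + 1)(m + 1)-nm=n+m+1} and the color-shift identity entirely: two rounds of the defining recursion plus $\sigma^2=\id$ give the step-two, color-preserving recursion $f_Z(n)=f_Z(n-2)$, at the cost of checking both base cases $n=0$ and $n=1$. What the paper's approach buys is economy of base cases and a tighter link to the companion ``addition'' lemma; what yours buys is self-containment (nothing beyond the definition and $\sigma^2=\id$ is used) and a cleaner bookkeeping of colors, since nothing swaps. Both are sound; your variant would work equally well in the paper, and it would slightly reduce the logical dependence on the earlier lemmas.
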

\begin{proof}
By the previous lemma, we have $[m + n + 1]_{\sigma(Z)}[n]_{Z} = [m + 2n]_{\sigma^{n}(Z)} + [m + n]_{Z}[n - 1]_{\sigma(Z)}$.
By swapping $m + n$ with $n$, we have $[m + n]_{Z}[n + 1]_{\sigma(Z)} = [m + 2n]_{\sigma^{m + n}(Z)} + [m + n - 1]_{\sigma(Z)}[n]_{Z}$.
By Lemma~\ref{lem:quantum num, even, odd} (\ref{lem:enum:quantum num, shift}), we have $[m + 2n]_{\sigma^{m + n}(Z)} = [m + 2n]_{\sigma^{n}(Z)}$.
Hence $[m + n]_{Z}[n + 1]_{\sigma(Z)} - [m + n + 1]_{\sigma(Z)}[n]_{Z} = [m + n - 1]_{\sigma(Z)}[n]_{Z} - [m + n]_{Z}[n - 1]_{\sigma(Z)}$.
Therefore, by induction on $n$, $[m + n]_{Z}[n + 1]_{\sigma(Z)} - [m + n + 1]_{\sigma(Z)}[n]_{Z} = [m]_{\sigma^{n}(Z)}[1]_{\sigma^{n + 1}(Z)} - [m + 1]_{\sigma^{n + 1}(Z)}[0]_{\sigma^{n}(Z)} = [m]_{\sigma^{n}(Z)}$.
\end{proof}
\begin{lem}\label{lem:quantum, (m + n + 1)(m + n) - (m + 1)m = n(2m + n + 1)}
For $m,n\in\Z_{\ge 0}$ and $Z\in \{X,Y\}$, we have 
\begin{align*}
[m + n + 1]_{\sigma^n(Z)}[m + n]_Z - [m + 1]_Z[m]_{\sigma^n(Z)}
 &=[n]_Z[2m + n + 1]_{\sigma^{m + 1}(Z)}\\
 & = [n]_{\sigma^n(Z)}[2m + n + 1]_{\sigma^{n + m}(Z)},\\
 [m + n + 1]_{\sigma^{n + 1}(Z)}[m + n + 1]_Z - [m]_Z[m]_{\sigma^{n + 1}(Z)}
 & = [n + 1]_{Z}[2m + n + 1]_{\sigma^{m}(Z)}
\end{align*}
\end{lem}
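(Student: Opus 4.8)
The plan is to reduce all three identities to the recursion and shift properties of the two-colored quantum numbers already at hand, namely Lemmas~\ref{lem:quantum num, even, odd}, \ref{lem:quantum, (n + 1)(m + 1)-nm=n+m+1}, and \ref{lem:quantum, (m + n + 1)n - (m + n)(n  +1) = -m}. Concretely, I would first fix $m$ and induct on $n$, treating all of $Z\in\{X,Y\}$ uniformly by carrying the colour through $\sigma$ as the earlier lemmas do. The base cases $n=0$ and $n=1$ should be immediate from the definition of $[n]_Z$ together with the relation $[n+1]_Z=[2]_Z[n]_{\sigma(Z)}-[n-1]_Z$: for $n=0$ the first line reads $[m+1]_{Z}[m]_Z-[m+1]_Z[m]_Z=0=[0]_Z[\cdots]$, and for $n=1$ one expands $[m+2]_{\sigma(Z)}=[2]_{\sigma(Z)}[m+1]_{\sigma^2(Z)}-[m]_{\sigma(Z)}$ and checks the claim directly, using $[2]_Z=Z$.

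For the inductive step on the first identity, I would write $[m+n+1]_{\sigma^n(Z)}=[2]_{\sigma^n(Z)}[m+n]_{\sigma^{n+1}(Z)}-[m+n-1]_{\sigma^n(Z)}$ and $[m+n]_Z$ via the analogous recursion, expand the product, and collect terms so that the induction hypotheses for $n-1$ and $n-2$ apply; the resulting combination should telescope to $[2]_{\sigma^n(Z)}[n-1+\text{(something)}]$-type expressions which, after applying Lemma~\ref{lem:quantum num, even, odd}(\ref{lem:enum:quantum num, shift}) to normalise the colours exactly as in the proof of Lemma~\ref{lem:quantum, (n + 1)(m + 1)-nm=n+m+1}, reassemble into $[n]_Z[2m+n+1]_{\sigma^{m+1}(Z)}$. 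The two-sided form of the first line, with $[n]_Z[2m+n+1]_{\sigma^{m+1}(Z)}=[n]_{\sigma^n(Z)}[2m+n+1]_{\sigma^{n+m}(Z)}$, then follows purely from parity bookkeeping: $[k]_X=[k]_Y$ when $k$ is odd and $[k]_X/X=[k]_Y/Y$ when $k$ is even, so the product $[n]\,[2m+n+1]$ is colour-symmetric because the two indices have opposite parities, and one only needs to check that $\sigma^n$ versus $\sigma^{n+m}$ on $[2m+n+1]$ agree by Lemma~\ref{lem:quantum num, even, odd}(3). The third identity is obtained from the first by the substitution $m\mapsto m$, $n\mapsto n+1$ combined with the colour-swap involution $\sigma$, or alternatively proved by the same induction; I would phrase it as a short corollary of the first rather than redo the computation.

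The main obstacle, as the paper itself flags in the introduction, is purely notational rather than conceptual: keeping the $\sigma^{(\cdot)}(Z)$ superscripts consistent through every application of the recursion, since a single off-by-one in the colour index invalidates the cancellation. To control this I would, at each step, immediately rewrite every quantum number so that its colour superscript matches the parity of its index via Lemma~\ref{lem:quantum num, even, odd}(3) before combining terms, which makes the algebra mechanical. A secondary small point is that the claimed equalities are identities in $\Z[X,Y]$, so no nonvanishing hypotheses are needed and the induction is unconditional; I would remark this explicitly so the lemma can later be quoted freely when dividing by $[m_{s,t}-1]_Z$ under the balancedness assumption.
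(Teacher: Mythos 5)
The paper does not prove this lemma by induction on $n$; it proves all three identities by direct substitution. Concretely, one applies Lemma~\ref{lem:quantum, (n + 1)(m + 1)-nm=n+m+1} to rewrite $[m+n+1]_{\sigma^n(Z)}$ as $[m+1]_Z[n+1]_{\sigma(Z)}-[m]_{\sigma(Z)}[n]_Z$ and Lemma~\ref{lem:quantum, (m + n + 1)n - (m + n)(n  +1) = -m} to rewrite $[m]_{\sigma^n(Z)}$ as $[m+n]_Z[n+1]_{\sigma(Z)}-[m+n+1]_{\sigma(Z)}[n]_Z$; substituting both into the left-hand side, the $[n+1]_{\sigma(Z)}$ terms cancel and the remainder factors as $[n]_Z\bigl([m+n+1]_{\sigma(Z)}[m+1]_Z-[m+n]_Z[m]_{\sigma(Z)}\bigr)$, which is $[n]_Z[2m+n+1]_{\sigma^{m+1}(Z)}$ by another application of Lemma~\ref{lem:quantum, (n + 1)(m + 1)-nm=n+m+1}. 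Your proposal uses a different decomposition (a fresh induction on $n$ via the defining two-term recursion), and this is where the gap is: the inductive step is not carried out, and I do not believe it goes through as sketched. Expanding $[m+n+1]_{\sigma^n(Z)}$ and $[m+n]_Z$ one step down and ``collecting terms so that the induction hypotheses for $n-1$ and $n-2$ apply'' does not produce a linear combination of $F(m,n-1)$ and $F(m,n-2)$ — a product of two solutions of the order-two recursion does not itself satisfy that recursion, so nothing telescopes without first invoking exactly the product identities of Lemmas~\ref{lem:quantum, (n + 1)(m + 1)-nm=n+m+1}--\ref{lem:quantum, (m + n + 1)n - (m + n)(n  +1) = -m} that make the induction superfluous. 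Even your base case $n=1$ is not ``immediate'': it reduces to $[m+1]_Z\bigl([2]_{\sigma(Z)}[m+1]_Z-2[m]_{\sigma(Z)}\bigr)=[2m+2]_{\sigma^{m+1}(Z)}$, which is itself a nontrivial product identity.

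A second, more concrete error: the third identity does not follow from the first by the substitution $n\mapsto n+1$ together with the colour swap. Performing that substitution yields
\[
[m+n+2]_{\sigma^{n+2}(Z)}[m+n+1]_{\sigma(Z)}-[m+1]_{\sigma(Z)}[m]_{\sigma^{n+2}(Z)}=[n+1]_{\sigma(Z)}[2m+n+2]_{\sigma^{m+2}(Z)},
\]
which has the wrong indices (note the $[m+n+2][m+n+1]$ and $[2m+n+2]$, versus the claimed $[m+n+1]^2$ and $[2m+n+1]$). The paper instead redoes the substitution-and-cancellation argument for the third line: apply Lemma~\ref{lem:quantum, (n + 1)(m + 1)-nm=n+m+1} with $(m-1,n+1)$ to $[m+n+1]_{\sigma^{n+1}(Z)}$ and Lemma~\ref{lem:quantum, (m + n + 1)n - (m + n)(n  +1) = -m} with $(m,n+1)$ to $[m]_{\sigma^{n+1}(Z)}$, and the same cancellation produces a factor $[n+1]_Z$. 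Your treatment of the middle equality (re-colouring via Lemma~\ref{lem:quantum num, even, odd}(\ref{lem:enum:quantum num, shift})) is correct and agrees with the paper; the other two steps need to be replaced by the direct computation.
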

\begin{proof}
Applying Lemma~\ref{lem:quantum, (n + 1)(m + 1)-nm=n+m+1} to $[m + n + 1]_{\sigma^n(Z)}$ (resp.\ Lemma~\ref{lem:quantum, (m + n + 1)n - (m + n)(n  +1) = -m} to $[m]_{\sigma^n(Z)}$), we have
\begin{align*}
& [m + n + 1]_{\sigma^n(Z)}[m + n]_Z - [m + 1]_Z[m]_{\sigma^n(Z)}\\
& = ([m + 1]_{Z}[n + 1]_{\sigma(Z)} - [m]_{\sigma(Z)}[n]_Z)[m + n]_Z\\
& \quad- [m + 1]_Z([m + n]_Z[n + 1]_{\sigma(Z)} - [m + n + 1]_{\sigma(Z)}[n]_Z)\\
& = -[m]_{\sigma(Z)}[n]_Z[m + n]_Z + [m + 1]_Z[m + n + 1]_{\sigma(Z)}[n]_Z\\
& = [n]_Z([m + n + 1]_{\sigma(Z)}[m + 1]_Z - [m + n]_Z[m]_{\sigma(Z)}).
\end{align*}
The first formula of the lemma follows from Lemma~\ref{lem:quantum, (n + 1)(m + 1)-nm=n+m+1}.
The second follows from the first and Lemma~\ref{lem:quantum num, even, odd} (\ref{lem:enum:quantum num, shift}).
The third formula follows from a similar calculation.
\end{proof}

For $m,n\in \Z_{\ge 0}$ such that $n\le m$ and $Z\in \{X,Y\}$, define the two-colored quantum binomial coefficient $\sqbinom{m}{n}_Z$ \cite[Definition~6.7]{arXiv:2011.05432} by
\[
\sqbinom{m}{n}_Z = \frac{[m]_Z[m - 1]_Z\cdots [m - n + 1]_Z}{[n]_Z[n - 1]_Z \cdots [1]_Z}.
\]
By the lemma (2) below and induction, we have $\sqbinom{m}{n}_Z\in \Z[X,Y]$.

\begin{lem}\label{lem:an inductive formula on binomial}
Let $m,n\in\Z$ such that $1\le n\le m$ and $Z\in \{X,Y\}$.
\begin{enumerate}
\item 
\[
\sqbinom{m}{n}_{Z} = \sqbinom{m + 1}{n}_{\sigma^n(Z)}[n + 1]_Z - \sqbinom{m}{n - 1}_Z[m + 2]_{\sigma^{n + 1}(Z)}.
\]
\item 
\[
\sqbinom{m + 1}{n}_Z = \sqbinom{m}{n}_{\sigma^n(Z)}[n + 1]_{Z} - \sqbinom{m}{n - 1}_{Z}[m - n]_{\sigma^{n + 1}(Z)}.
\]
\end{enumerate}
\end{lem}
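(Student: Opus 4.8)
The plan is to avoid any induction and reduce each of the two identities directly to one of the quantum-number relations already proved, carrying out all computations in $\Frac(\Z[X,Y])$ --- legitimate, since every positive quantum number is nonzero by Lemma~\ref{lem:quantum num, even, odd}. I would first set up two ingredients. The first is the pair of elementary ``column'' relations
\[
\sqbinom{m}{n}_W = \frac{[m - n + 1]_W}{[n]_W}\,\sqbinom{m}{n - 1}_W ,
\qquad
\sqbinom{m + 1}{n}_W = \frac{[m + 1]_W}{[n]_W}\,\sqbinom{m}{n - 1}_W
\qquad (W \in \{X, Y\}),
\]
which are immediate from the product formula defining $\sqbinom{\bullet}{\bullet}_W$. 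The second is the auxiliary equality
\[
\sqbinom{m}{n - 1}_{\sigma^n(Z)} = \sqbinom{m}{n - 1}_Z ,
\]
which is trivial when $n$ is even (then $\sigma^n(Z) = Z$) and, when $n$ is odd, follows because $n - 1$ is even, so $\{m, m - 1, \dots, m - n + 2\}$ and $\{1, 2, \dots, n - 1\}$ are both sets of $n - 1$ consecutive integers and hence contain the same number of even integers --- putting us in the situation of the observation recorded just after the proof of Lemma~\ref{lem:quantum num, even, odd} (the consequence of parts~(\ref{lem:enum:quantum num, odd}) and~(\ref{lem:enum:quantum num, even})).

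With these prepared, for part~(1) I would substitute the column relations into the right-hand side --- using $W = Z$ for $\sqbinom{m}{n}_Z$ and $W = \sigma^n(Z)$ for $\sqbinom{m + 1}{n}_{\sigma^n(Z)}$ --- then rewrite $\sqbinom{m}{n - 1}_{\sigma^n(Z)}$ as $\sqbinom{m}{n - 1}_Z$, cancel the common factor $\sqbinom{m}{n - 1}_Z$, and use $[n]_{\sigma^n(Z)} = [n]_Z$ (Lemma~\ref{lem:quantum num, even, odd}(\ref{lem:enum:quantum num, shift})). What survives is the single quantum-number identity
\[
[m - n + 1]_Z = [m + 1]_{\sigma^n(Z)}[n + 1]_Z - [m + 2]_{\sigma^{n + 1}(Z)}[n]_Z ,
\]
and this is exactly Lemma~\ref{lem:quantum, (m + n + 1)n - (m + n)(n  +1) = -m} after specialising its colour to $\sigma^n(Z)$ and its two integer parameters to $m - n + 1$ and $n$, the colours being brought into standard form via $\sigma^{2n}(Z) = Z$ and $[n + 1]_{\sigma^{n + 1}(Z)} = [n + 1]_Z$. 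Part~(2) goes through by the same steps and reduces to
\[
[m + 1]_Z = [m - n + 1]_{\sigma^n(Z)}[n + 1]_Z - [m - n]_{\sigma^{n + 1}(Z)}[n]_Z ,
\]
which is Lemma~\ref{lem:quantum, (n + 1)(m + 1)-nm=n+m+1} with its colour specialised to $\sigma^n(Z)$ and its integer parameters to $m - n$ and $n$.

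I do not expect a genuinely hard step here: the content is already spent in Lemmas~\ref{lem:quantum, (n + 1)(m + 1)-nm=n+m+1} and~\ref{lem:quantum, (m + n + 1)n - (m + n)(n  +1) = -m}, and what remains is pure bookkeeping. The only point requiring care is exactly that bookkeeping --- keeping straight which of $X$ and $Y$ occupies each subscript after the substitution $Z \mapsto \sigma^n(Z)$, and invoking $[k]_W = [k]_{\sigma^k(W)}$ at the right moments so that the colours on the two sides of each equation agree. (One could instead prove both identities by induction on $n$, with base case $n = 1$ coming straight from the defining recursion, but the inductive step requires the same colour juggling and seems no shorter.)
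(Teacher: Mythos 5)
Your proof is correct, and it is essentially the same argument as the paper's: both reduce part (1) to Lemma~\ref{lem:quantum, (m + n + 1)n - (m + n)(n  +1) = -m} and part (2) to Lemma~\ref{lem:quantum, (n + 1)(m + 1)-nm=n+m+1} with the colour specialized to $\sigma^n(Z)$, and both discharge the remaining colour mismatch by the parity observation following Lemma~\ref{lem:quantum num, even, odd}. The paper substitutes these identities directly into the product formula and verifies the needed equality of ratios, which — after cancelling $[m+1]_{\sigma^n(Z)}$ and using $[n]_Z=[n]_{\sigma^n(Z)}$ — is exactly your auxiliary equality $\sqbinom{m}{n-1}_{\sigma^n(Z)}=\sqbinom{m}{n-1}_Z$; your "column relations" just make this factorization explicit.
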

\begin{proof}
(1)
By Lemma~\ref{lem:quantum, (m + n + 1)n - (m + n)(n  +1) = -m}, we have $[m - n + 1]_{Z} = [m + 1]_{\sigma^{n}(Z)}[n + 1]_{\sigma^{n + 1}(Z)} - [m + 2]_{\sigma^{n + 1}(Z)}[n]_{\sigma^{n}(Z)}$.
By Lemma~\ref{lem:quantum num, even, odd} (\ref{lem:enum:quantum num, shift}), we have $[n]_{\sigma^{n}(Z)} = [n]_Z$ and $[n + 1]_{\sigma^{n + 1}(Z)} = [n + 1]_Z$.
Therefore 
\begin{align*}
\sqbinom{m}{n}_{Z}
& =
\frac{[m]_Z\cdots [m - n + 2]_Z}{[n]_Z\cdots [1]_Z}([m + 1]_{\sigma^{n}(Z)}[n + 1]_{Z} - [m + 2]_{\sigma^{n + 1}(Z)}[n]_{Z})\\
& = \frac{[m + 1]_{\sigma^{n}(Z)}[m]_Z\cdots [m - n + 2]_Z}{[n]_Z\cdots [1]_Z}[n + 1]_{Z}
- \sqbinom{m}{n - 1}_Z[m + 2]_{\sigma^{n + 1}(Z)}.
\end{align*}
Therefore it is sufficient to prove 
\[
\frac{[m + 1]_{\sigma^{n}(Z)}[m]_Z\cdots [m - n + 2]_Z}{[n]_Z\cdots [1]_Z}
=
\frac{[m + 1]_{\sigma^{n}(Z)}[m]_{\sigma^n(Z)}\cdots [m - n + 2]_{\sigma^{n}(Z)}}{[n]_{\sigma^n(Z)}\cdots [1]_{\sigma^n(Z)}}.
\]
If $n$ is even then we have nothing to prove.
If $n$ is odd, then $\#(2\Z\cap \{m,\ldots,m - n + 2\}) = \#(2\Z\cap \{n,\ldots,1\})$.
Hence it follows from Lemma~\ref{lem:quantum num, even, odd}.

\noindent
(2)
By Lemma~\ref{lem:quantum, (n + 1)(m + 1)-nm=n+m+1}, we have $[m + 1]_{Z} = [m - n + 1]_{\sigma^n(Z)}[n + 1]_{\sigma^{n + 1}(Z)} - [m - n]_{\sigma^{n + 1}(Z)}[n]_{\sigma^n(Z)}$.
By Lemma~\ref{lem:quantum num, even, odd} (\ref{lem:enum:quantum num, shift}), we have $[n + 1]_{\sigma^{n + 1}(Z)} = [n + 1]_{Z}$ and $[n]_{\sigma^n(Z)} = [n]_Z$.
Hence
\begin{align*}
\sqbinom{m + 1}{n}_Z & = \frac{[m]_Z\cdots [m - n + 2]_Z}{[n]_Z\cdots [n - 1]_Z\cdots [1]_Z}[m + 1]_Z\\
& = \frac{[m]_Z\cdots [m - n + 2]_Z[m - n + 1]_{Z}}{[n]_Z\cdots [1]_Z}[n + 1]_{Z} + \sqbinom{m}{n - 1}_Z[m - n]_{\sigma^{n + 1}(Z)}.
\end{align*}
It is sufficient to prove
\[
\frac{[m]_Z\cdots [m - n + 2]_Z[m - n + 1]_{\sigma^n(Z)}}{[n]_Z\cdots [1]_Z}
=
\sqbinom{m}{n}_{\sigma^n(Z)}.
\]
If $n$ is even we have nothing to prove.
If $n$ is odd, then we have $\#(2\Z\cap \{m,m - 1,\ldots,m- n + 2\}) = \#(2\Z\cap \{n,\ldots,1\})$.
Hence we get (2) by Lemma~\ref{lem:quantum num, even, odd}.
\end{proof}

\begin{lem}\label{lem:quantim binom}
We have
\[
\frac{[2m + n + 1]_{\sigma^{m + 1}(Z)}}{[m]_{\sigma^n(Z)}}\sqbinom{2m + n}{m - 1}_{\sigma^n(Z)}
=
\sqbinom{2m + n + 1}{m}_{\sigma^{n + 1}(Z)}.
\]
\end{lem}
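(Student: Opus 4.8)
The plan is to expand both sides by the definition of the two-colored quantum binomial coefficient and to observe that the underlying combinatorial data are identical on the two sides, so that only the colouring needs to be reconciled. Writing out $\sqbinom{2m+n}{m-1}_{\sigma^n(Z)}=\bigl([2m+n]_{\sigma^n(Z)}\cdots[m+n+2]_{\sigma^n(Z)}\bigr)\big/\bigl([m-1]_{\sigma^n(Z)}\cdots[1]_{\sigma^n(Z)}\bigr)$, the left-hand side of the lemma is the quotient of $[2m+n+1]_{\sigma^{m+1}(Z)}\prod_{a=m+n+2}^{2m+n}[a]_{\sigma^n(Z)}$ by $\prod_{a=1}^{m}[a]_{\sigma^n(Z)}$, whereas $\sqbinom{2m+n+1}{m}_{\sigma^{n+1}(Z)}$ is the quotient of $\prod_{a=m+n+2}^{2m+n+1}[a]_{\sigma^{n+1}(Z)}$ by $\prod_{a=1}^{m}[a]_{\sigma^{n+1}(Z)}$. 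Hence the numerator and denominator index multisets, namely $\{m+n+2,\dots,2m+n+1\}$ and $\{1,\dots,m\}$, agree on the two sides; the only difference is that on the left all factors carry the colour $\sigma^n(Z)$ except the single factor $[2m+n+1]$, which carries $\sigma^{m+1}(Z)$, while on the right all factors carry $\sigma^{n+1}(Z)$.

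To compare colours I would first record the following consequence of Lemma~\ref{lem:quantum num, even, odd}\,(\ref{lem:enum:quantum num, odd}) and (\ref{lem:enum:quantum num, even}): for any colour $W$ and any finite multiset $S$ of positive integers,
\[
\prod_{a\in S}[a]_{W}=\Bigl(\prod_{a\in S}[a]_{\sigma(W)}\Bigr)\left(\frac{[2]_{W}}{[2]_{\sigma(W)}}\right)^{\#(2\Z\cap S)},
\]
since odd-index quantum numbers do not depend on the colour while $[a]_X/[a]_Y=X/Y=[2]_X/[2]_Y$ for even $a$. Applying this with $W=\sigma^n(Z)$ to the numerator and to the denominator index set, and writing $e=\#\bigl(2\Z\cap\{m+n+2,\dots,2m+n\}\bigr)-\lfloor m/2\rfloor$, the ratio of the left-hand side of the lemma to the right-hand side collapses to
\[
\frac{[2m+n+1]_{\sigma^{m+1}(Z)}}{[2m+n+1]_{\sigma^{n+1}(Z)}}\left(\frac{[2]_{\sigma^{n}(Z)}}{[2]_{\sigma^{n+1}(Z)}}\right)^{\!e},
\]
so the lemma amounts to showing that this expression equals $1$.

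The remaining step is a parity count. A direct count of the even integers among the $m-1$ consecutive integers $m+n+2,\dots,2m+n$, compared with $\lfloor m/2\rfloor$, shows $e=-1$ when $m$ is even and $n$ is odd, and $e=0$ in all other cases. For the leftover ratio: if $n$ is even then $2m+n+1$ is odd, so by Lemma~\ref{lem:quantum num, even, odd}\,(\ref{lem:enum:quantum num, odd}) the ratio $[2m+n+1]_{\sigma^{m+1}(Z)}/[2m+n+1]_{\sigma^{n+1}(Z)}$ equals $1$, and since $e=0$ the whole expression is $1$; if $n$ is odd then $2m+n+1$ is even, so by Lemma~\ref{lem:quantum num, even, odd}\,(\ref{lem:enum:quantum num, even}) that ratio equals $[2]_{\sigma^{m+1}(Z)}/[2]_{\sigma^{n+1}(Z)}$, which is $1$ when $m$ is odd (and then $e=0$) and equals $[2]_{\sigma^{n}(Z)}/[2]_{\sigma^{n+1}(Z)}$ when $m$ is even (and then $e=-1$, so the product is again $1$). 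In every case the displayed expression is $1$.

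The only real difficulty here is the bookkeeping of $\sigma$-exponents — in particular that of the lone factor $[2m+n+1]$, whose colour genuinely matters, and does so only when $n$ is odd. If one prefers to shrink the case analysis, one can instead first use the elementary identities $\sqbinom{2m+n}{m-1}_W=\bigl([m]_W/[m+n+1]_W\bigr)\sqbinom{2m+n}{m}_W$ and $\sqbinom{2m+n+1}{m}_W=\bigl([2m+n+1]_W/[m+n+1]_W\bigr)\sqbinom{2m+n}{m}_W$ (both immediate from the definition) to rewrite both sides in terms of the single binomial $\sqbinom{2m+n}{m}$; the claim then becomes that a product of three explicit quantum-number ratios equals $1$, which the same facts from Lemma~\ref{lem:quantum num, even, odd} settle after essentially the same four-way parity check.
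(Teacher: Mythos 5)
Your proof is correct and uses essentially the same idea as the paper's: both reduce the claim to a count of even indices in the numerator and denominator multisets together with Lemma~\ref{lem:quantum num, even, odd}\,(\ref{lem:enum:quantum num, odd}),(\ref{lem:enum:quantum num, even}). The paper's write-up is marginally tidier because, after first replacing $Z$ by $\sigma^n(Z)$, it splits only on the parity of $m+n+1$ and arranges the even-index counts to match on the nose (so there is no separate discrepancy $e$ to absorb), whereas your split on the parities of $m$ and $n$ introduces an extra subcase.
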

\begin{proof}
Replacing $Z$ with $\sigma^n(Z)$, the lemma is equivalent to
\[
\frac{[2m + n + 1]_{\sigma^{m + n + 1}(Z)}[2m + n]_Z\cdots [m + n + 2]_Z}{[m]_Z[m - 1]_Z\cdots [1]_Z}
=
\frac{[2m + n + 1]_{\sigma(Z)}\cdots [m + n + 2]_{\sigma(Z)}}{[m]_{\sigma(Z)}[m - 1]_{\sigma(Z)}\cdots [1]_{\sigma(Z)}}.
\]
If $m + n + 1$ is even, then we have $\#(2\Z\cap \{2m + n + 1,\ldots,m + n + 2\}) = \#(2\Z\cap \{m,\ldots,1\})$.
Hence the lemma follows from Lemma~\ref{lem:quantum num, even, odd}.
If $m + n + 1$ is odd, then $\sigma^{m + n + 1}(Z) = \sigma(Z)$ and $\#(2\Z\cap \{2m + n,\ldots,m + n + 2\}) = \#(2\Z\cap \{m,\ldots,1\})$.
Hence again the lemma follows from Lemma~\ref{lem:quantum num, even, odd}.
\end{proof}

\subsection{A formula}
Let $(W,S)$ be the universal Coxeter system of rank two, namely the group $W$ is generated by the set of two elements $S = \{s,t\}$ and defined by relations $s^2 = t^2 = 1$.
The length function is denoted by $\ell$ and the Bruhat order is denoted by $\le$.
Let $V = \Z[X,Y]\alpha_s\oplus \Z[X,Y]\alpha_t$ be the free $\Z[X,Y]$-module of rank two with a basis $\{\alpha_s,\alpha_t\}$.
We define an action of $W$ on $V$ by 
\[
s(\alpha_s) = -\alpha_s,\quad s(\alpha_t) = \alpha_t + X\alpha_s,\quad t(\alpha_s) = \alpha_s + Y\alpha_t,\quad t(\alpha_t) = -\alpha_t.
\]
Let $\Phi = \{w(\alpha_s),w(\alpha_t)\mid w\in W\}$ be the set of roots and the set of positive roots $\Phi^+$ is defined by $\Phi^+ = \{w(\alpha_s)\mid ws > w\}\cup\{w(\alpha_t)\mid wt > w\}$.
For each $\alpha\in\Phi$, we have the reflection $s_{\alpha}\in W$.
This is defined as $s_{\alpha_s} = s$, $s_{\alpha_t} = t$, $s_{w(\alpha)} = ws_{\alpha}w^{-1}$ for $\alpha\in \{\alpha_s,\alpha_t\}$ and $w\in W$.

The following formula can be proved by induction.
\begin{lem}
We have
\[
(st)^k\alpha_s = [2k + 1]_X\alpha_s + [2k]_Y\alpha_t,\quad
(ts)^k\alpha_t = [2k]_X\alpha_s + [2k + 1]_Y\alpha_t,
\]
\end{lem}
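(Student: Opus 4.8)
The plan is to prove both identities by induction on $k$, and to deduce the second from the first via the evident symmetry of the realization. Indeed, the four defining relations for the action of $W$ on $V$ are interchanged in pairs by simultaneously swapping $s\leftrightarrow t$, $\alpha_s\leftrightarrow\alpha_t$ and $X\leftrightarrow Y$; so once $(st)^k\alpha_s = [2k+1]_X\alpha_s + [2k]_Y\alpha_t$ is established, applying this symmetry gives $(ts)^k\alpha_t = [2k]_X\alpha_s + [2k+1]_Y\alpha_t$ for free. Thus I would concentrate on the first formula.

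For the base case $k=0$ the first formula reads $\alpha_s = [1]_X\alpha_s + [0]_Y\alpha_t$, which holds since $[1]_X = 1$ and $[0]_Y = 0$. For the inductive step I would first record the two elementary computations directly from the action: $(st)\alpha_s = s(\alpha_s + Y\alpha_t) = -\alpha_s + Y(\alpha_t + X\alpha_s) = (XY - 1)\alpha_s + Y\alpha_t$ and $(st)\alpha_t = s(-\alpha_t) = -X\alpha_s - \alpha_t$. Assuming $(st)^k\alpha_s = [2k+1]_X\alpha_s + [2k]_Y\alpha_t$ and using that $s,t$ act by $\Z[X,Y]$-linear maps, we get
\[
(st)^{k+1}\alpha_s = [2k+1]_X\bigl((XY-1)\alpha_s + Y\alpha_t\bigr) + [2k]_Y\bigl(-X\alpha_s - \alpha_t\bigr),
\]
so the coefficient of $\alpha_t$ is $Y[2k+1]_X - [2k]_Y$ and the coefficient of $\alpha_s$ is $XY[2k+1]_X - [2k+1]_X - X[2k]_Y$.

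It remains to identify these coefficients with $[2k+2]_Y$ and $[2k+3]_X$. Here I would invoke Lemma~\ref{lem:quantum num, even, odd}(\ref{lem:enum:quantum num, odd}) to rewrite $[2k+1]_X = [2k+1]_Y$; then the recurrence $[2k+2]_Y = Y[2k+1]_Y - [2k]_Y$ gives the $\alpha_t$-coefficient, and the recurrence $[2k+3]_X = X[2k+2]_Y - [2k+1]_X$ together with the previous identity gives $X[2k+2]_Y - [2k+1]_X = XY[2k+1]_Y - X[2k]_Y - [2k+1]_X$, which is exactly the $\alpha_s$-coefficient. This closes the induction. The only point that needs any attention is this use of the parity identity $[2k+1]_X = [2k+1]_Y$ to make the two-colored recurrences match up with the (deliberately asymmetric) shape of the action; beyond that there is no real obstacle, the whole argument being a short induction.
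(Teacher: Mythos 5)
Your proof is correct and is precisely the straightforward induction the paper alludes to when it says the formula ``can be proved by induction,'' together with the evident $s\leftrightarrow t$, $X\leftrightarrow Y$, $\alpha_s\leftrightarrow\alpha_t$ symmetry for the second identity. One small remark: the appeal to the parity identity $[2k+1]_X=[2k+1]_Y$ can be avoided entirely by using the uniform recurrence $[n+1]_Z=[2]_Z[n]_{\sigma(Z)}-[n-1]_Z$ stated immediately after the definition of two-colored quantum numbers, since then the $\alpha_t$-coefficient $Y[2k+1]_X-[2k]_Y$ is literally $[2k+2]_Y$ and the $\alpha_s$-coefficient $XY[2k+1]_X-[2k+1]_X-X[2k]_Y$ is literally $X[2k+2]_Y-[2k+1]_X=[2k+3]_X$.
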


\begin{lem}\label{lem:formula of root}
Let $\gamma\in \Phi^+$ and $g = s_{\gamma}$.
\begin{enumerate}
\item If $sg > g$, then 
\[
\gamma = \left[\frac{\ell(g) - 1}{2}\right]_X\alpha_s + \left[\frac{\ell(g) + 1}{2}\right]_Y\alpha_t.
\]
\item If $tg > g$, then 
\[
\gamma = \left[\frac{\ell(g) + 1}{2}\right]_X\alpha_s + \left[\frac{\ell(g) - 1}{2}\right]_Y\alpha_t.
\]
\end{enumerate}
\end{lem}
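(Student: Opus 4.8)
The plan is to argue by induction on $\ell(g)$, which is odd because $g$ is a reflection. Since $W$ has no braid relations, $g$ has a unique reduced expression and it is an alternating word in $s,t$; as $\ell(g)$ is odd this word begins and ends with the same generator $a\in\{s,t\}$. Consequently exactly one of the two cases in the statement occurs: if $a=t$, prepending $s$ lengthens the word, so $sg>g$ and we are in case (1); if $a=s$, then $tg>g$ and we are in case (2). For the base case $\ell(g)=1$ we have $g\in\{s,t\}$, and it is immediate that $g=s$ forces $\gamma=\alpha_s=[1]_X\alpha_s+[0]_Y\alpha_t$ (case (2)) and $g=t$ forces $\gamma=\alpha_t=[0]_X\alpha_s+[1]_Y\alpha_t$ (case (1)), as required.

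For the inductive step, assume $\ell(g)=2k+1\ge 3$ and let $a$ be the first (equivalently, last) letter of the reduced expression of $g$. Set $g'=aga$. Deleting the outer letters of the reduced word of $g$ shows $\ell(g')=\ell(g)-2=2k-1$ and that the reduced word of $g'$ begins and ends with the other generator; in particular $g'$ falls into the case opposite to that of $g$. Since $\ell(g)\ge 3$ we have $g\ne a$, hence $\gamma\ne\alpha_a$, so $\gamma':=a(\gamma)\in\Phi^+$, and from $a\,s_\gamma\,a=s_{a(\gamma)}$ we get $g'=s_{\gamma'}$. Consider the case $a=t$ (the case $a=s$ is obtained by exchanging $s\leftrightarrow t$, $X\leftrightarrow Y$, $\alpha_s\leftrightarrow\alpha_t$). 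Then $g'$ is in case (2), so by the inductive hypothesis $\gamma'=[k]_X\alpha_s+[k-1]_Y\alpha_t$; applying $t$ and using $t(\alpha_s)=\alpha_s+Y\alpha_t$, $t(\alpha_t)=-\alpha_t$ gives
\[
\gamma=t(\gamma')=[k]_X\alpha_s+\bigl([2]_Y[k]_{\sigma(Y)}-[k-1]_Y\bigr)\alpha_t=[k]_X\alpha_s+[k+1]_Y\alpha_t,
\]
where the last equality is the recursion $[k+1]_Y=[2]_Y[k]_{\sigma(Y)}-[k-1]_Y$. Since $\ell(g)=2k+1$, this is exactly the formula of case (1), closing the induction.

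The computation itself is short, so the points that require care are bookkeeping ones: checking that passing from $g$ to $aga$ drops $\ell(g)$ by exactly $2$ and interchanges the two cases, verifying that $\gamma'=a(\gamma)$ stays in $\Phi^+$ (which is where $\gamma\ne\alpha_a$, i.e.\ $\ell(g)\ge 3$, is used), and recognising the coefficient $[2]_Y[k]_{\sigma(Y)}-[k-1]_Y$ as $[k+1]_Y$ via the $\sigma$-twisted recursion for the two-colored quantum numbers rather than a naive one. Alternatively, one could deduce the statement directly from the preceding lemma by listing the positive roots as $(st)^k\alpha_s$, $(ts)^kt\,\alpha_s$, $(ts)^k\alpha_t$, $(st)^ks\,\alpha_t$ and computing $\ell(s_\gamma)$ in each family, but this requires more case analysis than the induction above.
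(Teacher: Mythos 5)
Your proof is correct, but it takes a different route from the paper. The paper lists the positive roots as $(st)^k\alpha_s$, $t(st)^k\alpha_s$, $(ts)^k\alpha_t$, $s(ts)^k\alpha_t$, computes $\ell(s_\gamma)$ in each family, and then reads off the claimed coefficients from the preceding lemma (which gives $(st)^k\alpha_s = [2k+1]_X\alpha_s + [2k]_Y\alpha_t$ and its companion), doing one short quantum-number manipulation in the mixed case. You instead run a direct induction on $\ell(g)$: conjugate $g$ by its first letter $a$ to get $g' = aga$ of length $\ell(g)-2$ lying in the opposite case, set $\gamma' = a(\gamma)$, apply the inductive hypothesis, and recover the coefficient via the $\sigma$-twisted recursion $[k+1]_Z = [2]_Z[k]_{\sigma(Z)} - [k-1]_Z$. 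Both arguments are inductive at bottom — the paper's previous lemma is itself proved by induction — but yours internalizes the induction into the statement and so avoids enumerating the four root families; the paper's version is more explicit and makes the four-fold orbit structure of $\Phi^+$ visible. Your observation near the end (that one could instead compute $\ell(s_\gamma)$ in each family) is essentially the paper's proof. The bookkeeping you flag — $\ell(aga) = \ell(g)-2$, exchange of cases, positivity of $a(\gamma)$ from $\gamma \ne \alpha_a$, and using the twisted rather than naive recursion — is exactly where care is needed, and you handle each point correctly.
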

\begin{proof}
We have $\gamma = (ts)^k(\alpha_t)$ or $t(st)^k(\alpha_s)$ or $(st)^k(\alpha_s)$ or $s(ts)^k(\alpha_t)$.
If $\gamma = (ts)^k(\alpha_t)$, then $sg > g$ and $\ell(g) = 4k + 1$.
The lemma follows from the previous lemma.
If $\gamma = t(st)^k(\alpha_s)$, then $sg > g$ and $\ell(g) = 4k + 3$.
We have 
\begin{align*}
\gamma & = t([2k + 1]_X\alpha_s + [2k]_Y\alpha_t) = [2k + 1]_X(\alpha_s + Y\alpha_t) - [2k]_Y\alpha_t\\
& = [2k + 1]_X\alpha_s + ([2k + 1]_XY - [2k]_Y)\alpha_t = [2k + 1]_X\alpha_s + [2k + 2]_Y\alpha_t 
\end{align*}
and the lemma follows.
The proof of the other cases are similar.
\end{proof}

We define some elements which will be needed for our main formula.
We use the following notation for sequences in $S$.
A sequence in $S$ will be written with the underline like $\underline{w} = (s_1,\ldots,s_l)$.
We write $w = s_1\cdots s_l$.
For $u\in S$, put $(\underline{w},u) = (s_1,\ldots,s_l,u)$.
For $e = (e_1,\ldots,e_l)\in \{0,1\}^l$, we put $\underline{w}^e = s_1^{e_1}\cdots s_l^{e_l}$.
We set $\ell(\underline{w}) = l$.

For $g,w\in W$, we put 
\[
X_g^w =  \{\alpha\in\Phi^+\mid s_{\alpha}g\le w\}.
\]
Let $\underline{w} = (s_1,\ldots,s_l) \in S^l$ be a sequence of elements in $S$ and $g\in W$.
For a real number $r$, let $\floor{r}$ be the integral part of $r$.
We define $k_g^{\underline{w}}\in \Z[X,Y]$ as follows.
If $s_i = s_{i + 1}$ for some $i$ or $g\not\le w$, then $k_g^{\underline{w}} = 0$.
If $\ell(\underline{w}) = 0$, then $k_1^{\underline{w}} = 1$ and $k_g^{\underline{w}} = 0$ if $g\ne 1$.
Otherwise we put
\[
k_g^{\underline{w}}
=
\begin{cases}
\sqbinom{\ell(\underline{w}) - 1}{\floor*{\frac{\ell(\underline{w}) - \ell(g) - 1}{2}}}_{\sigma^{\ell(\underline{w}) - 1}(Z)} & (s_1g > g),\\
\sqbinom{\ell(\underline{w}) - 1}{\floor*{\frac{\ell(\underline{w}) - \ell(g)}{2}}}_{\sigma^{\ell(\underline{w}) - 1}(Z)} & (s_1g < g),
\end{cases}
\]
where $Z = X$ if $s_1 = s$ and $Z = Y$ if $s_1 = t$.

Let $R$ be the symmetric algebra of $V$ and $R^{\emptyset} = \Phi^{-1}R$ the ring of fractions.
We define an element $a^{\underline{w}}(g)$ of $R^{\emptyset}$ by
\[
a^{\underline{w}}(g)
=
\sum_{\underline{w}^e = g}
\frac{1}{\alpha_{s_1}}s_1^{e_1}\left(\frac{1}{\alpha_{s_2}}s_2^{e_2}\left(\cdots \frac{1}{\alpha_{s_{l - 1}}}s_{l - 1}^{e_{l - 1}}\left(\frac{1}{\alpha_{s_l}}\right)\cdots\right)\right)
=
\sum_{\underline{w}^e = g}
\prod_{i = 1}^l
(s_1^{e_1}\cdots s_{i - 1}^{e_{i - 1}})\left(\frac{1}{\alpha_{s_i}}\right)
.
\]
\begin{lem}
If $s_i = s_{i - 1}$ for some $i$, namely if $\underline{w}$ is not a reduced expression, then $a^{\underline{w}}(g) = 0$.
\end{lem}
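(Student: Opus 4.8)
The plan is to construct a fixed-point-free, sign-reversing involution on the index set $\{e\in\{0,1\}^l\mid \underline{w}^e = g\}$ of the sum defining $a^{\underline{w}}(g)$, so that the terms cancel in pairs. The idea is that a repeated letter produces, in the partial products, a pair of coordinates that can be flipped together without changing the group element, while flipping exactly one of them would.

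Fix an index $i$ with $s_{i-1} = s_i$ and write $u = s_i\in S$. For $e = (e_1,\dots,e_l)$ let $\iota(e)$ be obtained from $e$ by flipping the two coordinates in positions $i-1$ and $i$, that is $\iota(e)_j = e_j$ for $j\notin\{i-1,i\}$ and $\iota(e)_{i-1} = 1 - e_{i-1}$, $\iota(e)_i = 1 - e_i$. First I would check that $\iota$ maps the index set to itself: in $W$ one has $s_{i-1}^{e_{i-1}}s_i^{e_i} = u^{e_{i-1}+e_i}$, and since $u^2 = 1$ this element depends only on the parity of $e_{i-1}+e_i$; as $(1-e_{i-1})+(1-e_i)\equiv e_{i-1}+e_i\pmod 2$ the parity is preserved, so $\underline{w}^{\iota(e)} = \underline{w}^e$. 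Plainly $\iota^2 = \id$ and $\iota$ has no fixed point.

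Next I would compare the summands for $e$ and $\iota(e)$ factor by factor in
\[
a^{\underline{w}}(g) = \sum_{\underline{w}^e = g}\prod_{j=1}^l (s_1^{e_1}\cdots s_{j-1}^{e_{j-1}})\left(\frac{1}{\alpha_{s_j}}\right).
\]
For $j\le i-1$ the acting element $s_1^{e_1}\cdots s_{j-1}^{e_{j-1}}$ depends only on $e_1,\dots,e_{i-2}$, hence is unchanged; for $j > i$ the acting element equals $(s_1^{e_1}\cdots s_{i-2}^{e_{i-2}})\,u^{e_{i-1}+e_i}\,(s_{i+1}^{e_{i+1}}\cdots s_{j-1}^{e_{j-1}})$, which by the parity remark above is again unchanged; thus every factor with $j\ne i$ agrees for $e$ and $\iota(e)$ (the argument $1/\alpha_{s_j}$ never depends on $e$). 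The factor with $j = i$ equals $v\cdot u^{e_{i-1}}(1/\alpha_u)$ where $v = s_1^{e_1}\cdots s_{i-2}^{e_{i-2}}$, and replacing $e_{i-1}$ by $1-e_{i-1}$ multiplies it by $-1$ because $u(1/\alpha_u) = 1/u(\alpha_u) = -1/\alpha_u$ by the definition of the action of $W$ on $V$. Hence the summand of $\iota(e)$ is the negative of the summand of $e$.

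Grouping the sum into the orbits $\{e,\iota(e)\}$ of $\iota$ (an empty sum if no $e$ satisfies $\underline{w}^e = g$), the two contributions in each orbit cancel, so $a^{\underline{w}}(g) = 0$. The only step needing a little care is the bookkeeping that for $j > i$ the acting group element is genuinely unaffected, which reduces entirely to $e_{i-1}+e_i$ being preserved modulo $2$; I do not expect any real obstacle beyond choosing the right involution (flipping both coordinates, rather than, say, transposing them).
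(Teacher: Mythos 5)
Your proof is correct and is essentially the paper's own argument: both construct the same fixed-point-free sign-reversing involution (flipping coordinates $i-1$ and $i$), verify it preserves the index set and all factors with $j\ne i$, show the $j=i$ factor changes sign via $u(\alpha_u)=-\alpha_u$, and conclude by pairing terms. The only difference is bookkeeping detail; the mechanism is the same.
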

\begin{proof}
Set $A = \{e\in \{0,1\}^l\mid \underline{w}^e = g\}$.
Define $f\colon A\to A$ by $f(e) = (e'_1,\ldots,e'_l)$ where $e'_i = 1 - e_i$, $e'_{i - 1} = 1 - e_{i - 1}$ and $e'_j = e_i$ for $j\ne i,i - 1$.
Set $b_{e,j} = (s_1^{e_1}\cdots s_{j - 1}^{e_{j - 1}})\left(\frac{1}{\alpha_j}\right)$.
If $j < i$ then obviously we have $b_{e,j} = b_{f(e),j}$.
If $j > i$ then since $s_{i - 1}^{1 - e_{i - 1}}s_i^{1 - e_i} = s_{i - 1}^{e_{i - 1}}s_i^{e_i}$, we have $b_{e,j} = b_{f(e),j}$.
If $j = i$ then
\[
b_{f(e),i} = (s_1^{e_1}\cdots s_{i - 1}^{e_{i - 1}})s_{i - 1}\left(\frac{1}{\alpha_i}\right) = -b_{f(e),i}
\]
since $s_{i - 1}^{1 - e_{i - 1}} = s_{e_{i - 1}}^{e_{i - 1}}s_{i - 1}$ and $s_{i - 1} = s_i$.
Therefore $b_{e} = \prod_{i = 1}^lb_{e,i}$ satisfies $b_{f(e)} = -b_e$.
Let $B$ be a set of complete representatives of $A/\langle f\rangle$.
Then $a^{\underline{w}}(g) = \sum_{e\in A}b_{e} = \sum_{e\in B}(b_{e} + b_{f(e)}) = 0$.
\end{proof}

The aim of this section is to prove the following theorem.
\begin{thm}\label{thm:image of 1}
For $\underline{w}\in S^l$, we have
\[
a^{\underline{w}}(g) = \frac{k_{g}^{\underline{w}}}{\prod_{\alpha\in X_g^w}\alpha}.
\]
\end{thm}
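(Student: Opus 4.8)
The plan is to induct on $l=\ell(\underline{w})$, using the recursion obtained by splitting the defining sum of $a^{\underline{w}}(g)$ according to the value of $e_1$. First I would dispose of the degenerate cases. If $s_i=s_{i+1}$ for some $i$, both sides vanish: the left by the lemma just above, the right by the definition of $k_g^{\underline{w}}$. If $g\not\le w$, then there is no $e$ with $\underline{w}^e=g$ (the subwords of a reduced expression are exactly the elements $\le w$), so the left side is $0$, while $k_g^{\underline{w}}=0$ by definition and the right side is $0$ as well. Hence one may assume $\underline{w}$ is reduced and $g\le w$; by the symmetry that swaps $s$ with $t$ and $X$ with $Y$ one may further assume $s_1=s$, so that $Z=X$. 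The base case $l=0$ is immediate.

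For the inductive step, put $\underline{w}'=(s_2,\ldots,s_l)$ and $w'=s_2\cdots s_l$. Separating the terms with $e_1=0$ from those with $e_1=1$ gives
\[
a^{\underline{w}}(g)=\frac{1}{\alpha_s}\Bigl(a^{\underline{w}'}(g)+s\bigl(a^{\underline{w}'}(sg)\bigr)\Bigr).
\]
Since $\underline{w}$ is reduced, $\underline{w}'$ is reduced, its first letter is $t$, and $\ell(w')=l-1$; thus the upper index and the color shift occurring in $k_\bullet^{\underline{w}'}$ are one step below those in $k_\bullet^{\underline{w}}$, which is exactly the shape of the binomial recursions in Lemma~\ref{lem:an inductive formula on binomial}. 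The Bruhat order on the universal rank-two Coxeter system is completely explicit: $y\le x$ if and only if $\ell(y)<\ell(x)$ or $y=x$. I would use this to decide, for each of $g$ and $sg$, whether it lies $\le w'$, hence which of the two terms on the right survives; the boundary cases $\ell(g)=l-1$ and $g=w$ are precisely those in which one term drops out (in the latter, $a^{\underline{w}}(w)=\alpha_s^{-1}\,s\bigl(a^{\underline{w}'}(w')\bigr)$). The same description of the Bruhat order, combined with Lemma~\ref{lem:formula of root}, makes the root products $\prod_{\alpha\in X_g^{w}}\alpha$, $\prod_{\alpha\in X_g^{w'}}\alpha$ and $\prod_{\alpha\in X_{sg}^{w'}}\alpha$ — as well as the effect of $s$ on the last of them — explicit in terms of $\alpha_s$, $\alpha_t$ and two-colored quantum numbers.

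Plugging the inductive hypothesis into the recursion rewrites the right-hand side as a combination of two two-colored quantum binomial coefficients over a product of roots. Clearing denominators — and using the parity remark following Lemma~\ref{lem:quantum num, even, odd} to move the color $\sigma^{\bullet}$ across products of roots of matching parity — reduces the claim, in the generic case $\ell(g)\le l-2$, to one of the two identities of Lemma~\ref{lem:an inductive formula on binomial}; one of the extremal cases (where $\ell(g)=l-1$) additionally requires Lemma~\ref{lem:quantim binom}; and the case $g=w$ reduces to the factorization $X_w^{w}=\{\alpha_s\}\cup s(X_{w'}^{w'})$ of the inversion set of $w$. The verification thus splits into the finitely many cases determined by the sign of $sg$ relative to $g$, the parity of $\ell(g)$, and the comparison of $\ell(g)$ with $l$. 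I expect the main obstacle to be entirely one of bookkeeping: making the floor functions, the color shifts, and the exact discrepancy between $X_g^{w}$ and $X_g^{w'}\cup s(X_{sg}^{w'})$ line up with one of the quantum-number identities in every single case. The lemmas of Section~\ref{subsec:two-colored quantum numbers} are organized precisely so that, once the cases are correctly set up, each of these matches becomes a short computation; as the introduction already stresses, the real difficulty is guessing the correct formula — in particular the correct definition of $k_g^{\underline{w}}$ — after which the induction runs smoothly.
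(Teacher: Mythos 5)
Your route is genuinely different from the paper's. You split the defining sum of $a^{\underline{w}}(g)$ on the \emph{first} index $e_1$, obtaining $a^{\underline{w}}(g)=\alpha_{s_1}^{-1}\bigl(a^{\underline{w}'}(g)+s_1(a^{\underline{w}'}(s_1g))\bigr)$ with $\underline{w}'=(s_2,\ldots,s_l)$; the paper instead splits on the \emph{last} index $e_l$, obtaining $a^{(\underline{w},u)}(g)=g(\alpha_u)^{-1}\bigl(a^{\underline{w}}(g)-a^{\underline{w}}(gu)\bigr)$ (Lemma~\ref{lem:inductive formula of a}). The paper's recursion keeps the induction entirely within ratios $k/\prod\alpha$: no Weyl group element ever acts on the inductive hypothesis, the relevant roots $\beta,\gamma,g(\alpha_u)$ are computed directly from Lemma~\ref{lem:formula of root}, and the whole verification collapses to the quantum-number identities of Lemmas~\ref{lem:quantum, (m + n + 1)(m + n) - (m + 1)m = n(2m + n + 1)} and \ref{lem:quantim binom}, with the set-theoretic bookkeeping isolated in Lemmas~\ref{lem:inductive formula on X}--\ref{lem:inductive formula on a,k,X, not changed}. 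Your recursion, by contrast, applies $s_1$ to one of the two inductive terms, so you must additionally track $s_1\bigl(\prod_{\alpha\in X_{s_1g}^{w'}}\alpha\bigr)$ — including the sign flip when $\alpha_{s_1}\in X_{s_1g}^{w'}$ — and prove an identity of the form $X_w^w=\{\alpha_{s_1}\}\cup s_1(X_{w'}^{w'})$ and its analogues for $g<w$, which is the price you pay for the plus sign and the conjugation. Consistent with the different decomposition, you expect the binomial recursion Lemma~\ref{lem:an inductive formula on binomial} to be the engine, whereas the paper never uses it here (it appears only in Proposition~\ref{prop:the assumptions}); instead the paper's engine is the single-step ratio Lemma~\ref{lem:quantim binom}. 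Your sketch stops short of the case analysis that you correctly identify as the bulk of the work, so it is a plan rather than a complete proof; but the plan is coherent, the base case, degenerate cases, and the appeal to the explicit rank-two Bruhat order are right, and I see no obstruction in principle. The trade-off is that the paper's choice avoids Weyl group actions and signs at the cost of a slightly less symmetric-looking recursion, while yours is the more natural-looking split given that $k_g^{\underline{w}}$ is defined via the \emph{first} letter, at the cost of the conjugation bookkeeping.
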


From the above lemma, we may assume $s_{i - 1}\ne s_i$ for any $i$.
By definitions, we also may assume $g\le w$, otherwise both sides are zero.

\subsection{Proof of Theorem~\ref{thm:image of 1}}
In this subsection we prove Theorem~\ref{thm:image of 1}.

We split the sum in the definition of $a^{\underline{w}}(g)$ to $e_l = 0$ part and $e_l = 1$ part.
If $e_l = 0$, then $s_1^{e_1}\cdots s_{l - 1}^{e_{l - 1}} = g$.
Hence $(s_1^{e_1}\cdots s_{l - 1}^{e_{l - 1}})\left(\frac{1}{\alpha_l}\right) = g\left(\frac{1}{\alpha_l}\right)$.
Therefore 
\[
\prod_{i = 1}^{l}(s_1^{e_1}\cdots s_{i - 1}^{e_{i - 1}})\left(\frac{1}{\alpha_i}\right) =
g\left(\frac{1}{\alpha_l}\right) \prod_{i = 1}^{l}(s_1^{e_1}\cdots s_{i - 1}^{e_{i - 1}})\left(\frac{1}{\alpha_i}\right)
\]
Similarly if $e_l = 1$ then $(s_1^{e_1}\cdots s_{l - 1}^{e_{l - 1}})\left(\frac{1}{\alpha_l}\right) = gs_l\left(\frac{1}{\alpha_l}\right) = -g\left(\frac{1}{\alpha_l}\right)$.
Therefore we have
\begin{align*}
a^{\underline{w}}(g)
& =
\frac{1}{g(\alpha_l)}\left(
\sum_{s_1^{e_1}\cdots s_{l - 1}^{e_{l - 1}} = g}\prod_{i = 1}^{l - 1}(s_1^{e_1}\cdots s_{i - 1}^{e_{i - 1}})\left(\frac{1}{\alpha_i}\right)
-
\sum_{s_1^{e_1}\cdots s_{l - 1}^{e_{l - 1}} = gs_l}\prod_{i = 1}^{l - 1}(s_1^{e_1}\cdots s_{i - 1}^{e_{i - 1}})\left(\frac{1}{\alpha_i}\right)
\right)
\\
& =
\frac{1}{g(\alpha_l)}(a^{(s_1,\ldots,s_{l - 1})}(g) - a^{(s_1,\ldots,s_{l - 1})}(gs_l)).
\end{align*}

We change the notation slightly and we get the following lemma.
\begin{lem}\label{lem:inductive formula of a}
Let $\underline{w} \in S^l$ and $u\in S$.
Then we have
\[
a^{(\underline{w},u)}(g)
=
\frac{1}{g(\alpha_u)}(a^{\underline{w}}(g) - a^{\underline{w}}(gu)).
\]
\end{lem}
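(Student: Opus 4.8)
The statement is the natural recursion for $a^{\bullet}(\cdot)$ obtained by peeling off the last letter; in fact it is precisely the computation displayed just above, with $\underline{w}$ and $(\underline{w},u)$ relabeled, so the plan is simply to record that computation in the cleaner notation. Write $(\underline{w},u) = (s_1,\ldots,s_l,u)$. By definition, $a^{(\underline{w},u)}(g)$ is the sum, over $e = (e_1,\ldots,e_{l+1})\in\{0,1\}^{l+1}$ with $s_1^{e_1}\cdots s_l^{e_l}u^{e_{l+1}} = g$, of the products $\prod_{i=1}^{l+1}(s_1^{e_1}\cdots s_{i-1}^{e_{i-1}})\!\left(\tfrac{1}{\alpha_{s_i}}\right)$, where the top factor ($i=l+1$) is $(s_1^{e_1}\cdots s_l^{e_l})\!\left(\tfrac{1}{\alpha_u}\right)$.

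First I would split this sum according to the value of $e_{l+1}$. When $e_{l+1}=0$, the constraint forces $s_1^{e_1}\cdots s_l^{e_l}=g$, so the top factor equals $g\!\left(\tfrac{1}{\alpha_u}\right)=\tfrac{1}{g(\alpha_u)}$, a constant independent of $e$; pulling it out, what remains is exactly $a^{\underline{w}}(g)$. When $e_{l+1}=1$, the constraint forces $s_1^{e_1}\cdots s_l^{e_l}=gu$, so the top factor equals $(gu)\!\left(\tfrac{1}{\alpha_u}\right)=\tfrac{1}{g(u(\alpha_u))}=-\tfrac{1}{g(\alpha_u)}$, using $u(\alpha_u)=-\alpha_u$; pulling out $-\tfrac{1}{g(\alpha_u)}$, what remains is $a^{\underline{w}}(gu)$. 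Adding the two contributions gives
\[
a^{(\underline{w},u)}(g)=\frac{1}{g(\alpha_u)}\bigl(a^{\underline{w}}(g)-a^{\underline{w}}(gu)\bigr),
\]
which is the asserted identity.

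There is no genuine obstacle here: this is a bookkeeping identity. The only point needing care is the sign in the $e_{l+1}=1$ case, which is the source of the minus sign in front of $a^{\underline{w}}(gu)$; it rests only on $u(\alpha_u)=-\alpha_u$, i.e.\ on conditions (1) and (2) of the realization. I would not invoke the earlier vanishing lemma for non-reduced sequences, since the argument above is valid verbatim whether or not $(\underline{w},u)$ is reduced: the equality $s_1^{e_1}\cdots s_l^{e_l}=g$ (resp.\ $=gu$) holds by the defining constraint in each case regardless.
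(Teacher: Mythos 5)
Your proof is correct and follows the same route as the paper: split the defining sum over the last exponent $e_{l+1}$, pull out the last factor in each case (with the sign coming from $u(\alpha_u)=-\alpha_u$), and recognize the remaining sums as $a^{\underline{w}}(g)$ and $a^{\underline{w}}(gu)$. The paper does exactly this (it presents the computation in indexed notation and then relabels), so there is no substantive difference.
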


To prove the theorem we need the following lemmas.

\begin{lem}\label{lem:inductive formula on X}
Let $w,g\in W$ and $u\in S$ such that $wu > w$, $sw < w$, $g,gu\le w$.
\begin{enumerate}
\item There exists unique $\beta\in X_g^{w}$ such that $s_{\beta}\in \{wg^{-1},swg^{-1}\}$.
\item There exists unique $\gamma\in X_{gu}^{w}$ such that $s_{\gamma}\in \{wug^{-1},swug^{-1}\}$.
\item We have $X_g^{w}\setminus\{\beta\} = X_{gu}^{w} \setminus\{\gamma\}$ and $X_{gu}^{wu} = X_{g}^w\cup \{\gamma\}$.
\end{enumerate}
\end{lem}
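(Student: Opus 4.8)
\emph{Strategy.} The universal Coxeter system of rank two is the infinite dihedral group, whose combinatorics is completely transparent, and the plan is to read off all three assertions from a handful of structural facts. First I would record: every $w\in W$ has a unique reduced expression, an alternating word in $s$ and $t$; for each length $l\ge 1$ there are exactly two elements, one whose reduced word begins with $s$ and one which begins with $t$; the reflections are precisely the elements of odd length; the right descent set of $w\ne e$ is the singleton consisting of the last letter of its reduced word; and the Bruhat order is governed by length, $v\le w$ iff $\ell(v)<\ell(w)$ or $v=w$ (immediate from the subword property). Now fix $w,g,u$ as in the statement. Since $sw<w$, the reduced word of $w$ begins with $s$; since $wu>w$, the generator $u$ is not the last letter of $w$, hence is the other generator, $\ell(wu)=\ell(w)+1$, and $wu$ again begins with $s$. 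From $g\ne w\ne gu$ (forced, since $g=w$ would give $\ell(gu)=\ell(w)+1$) and $|\ell(g)-\ell(gu)|=1$ one gets $\ell(w)\ge 2$, $g<w$, $gu<w$. Writing $w'$ for the unique element of length $\ell(w)$ whose reduced word begins with $t$, a check with descent sets yields the two facts on which everything turns: $u$ is the last letter of $w'$ (so $w'u<w'$, i.e.\ $\ell(w'u)=\ell(w)-1$), and $swu=w'$.

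\emph{Parts (1) and (2).} The elements $wg^{-1}$ and $swg^{-1}$ differ by left multiplication by $s$, so their lengths have opposite parity; hence exactly one of them, say $r$, is a reflection, and I set $\beta\in\Phi^+$ with $s_\beta=r$. Whichever of the two $r$ is, $s_\beta g\in\{w,sw\}$, both of which are $\le w$ (the second because $sw<w$), so $\beta\in X_g^w$; and any $\beta'$ with $s_{\beta'}\in\{wg^{-1},swg^{-1}\}$ has $s_{\beta'}=r$, which gives uniqueness. Part (2) is the same argument with $g$ replaced by $gu$, using $\{wug^{-1},swug^{-1}\}=\{w(gu)^{-1},sw(gu)^{-1}\}$ and $gu\le w$; here $s_\gamma(gu)\in\{w,sw\}$.

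\emph{Part (3).} From $s_\beta g\in\{w,sw\}$, $s_\beta(gu)\in\{wu,w'\}$, $s_\gamma(gu)\in\{w,sw\}$, $s_\gamma g\in\{wu,w'\}$ (using $swu=w'$) and the observations $wu\not\le w$, $w'\not\le w$, $wu,w'<wu$, one reads off $s_\beta\in X_g^w\cap X_{gu}^{wu}$, $s_\beta\notin X_{gu}^w$, $s_\gamma\in X_{gu}^w\cap X_{gu}^{wu}$, $s_\gamma\notin X_g^w$. The core is a length count: let $r$ be a reflection with $r\notin\{s_\beta,s_\gamma\}$. Since $rg=w$ forces $r=wg^{-1}=s_\beta$, we have $rg\le w\iff\ell(rg)\le\ell(w)-1$; likewise $r(gu)\le w\iff\ell(r(gu))\le\ell(w)-1$, and (using $r(gu)=wu\Rightarrow r=wg^{-1}=s_\beta$) $r(gu)\le wu\iff\ell(r(gu))\le\ell(w)$. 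As $\ell(rg)$ and $\ell(r(gu))=\ell((rg)u)$ differ by exactly $1$, the conditions ``$rg\le w$'' and ``$r(gu)\le w$'' can disagree only if one of $rg,r(gu)$ has length $\ell(w)$, hence equals $w$ or $w'=swu$; but then $r$ lies in $\{wg^{-1},w(gu)^{-1}\}$ or, rearranging using $w'=swu$, in $\{swg^{-1},sw(gu)^{-1}\}$, i.e.\ $r$ is the reflection in the pair defining $\beta$ or in the pair defining $\gamma$, so $r\in\{s_\beta,s_\gamma\}$ — a contradiction; hence the two conditions agree. Combined with $s_\gamma\notin X_g^w$ and $s_\beta\notin X_{gu}^w$ this gives $X_g^w\setminus\{\beta\}=X_{gu}^w\setminus\{\gamma\}$. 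Similarly, if $r(gu)\le wu$ then $\ell(rg)\le\ell(w)+1$, and $\ell(rg)\in\{\ell(w),\ell(w)+1\}$ is impossible by the same analysis (it forces $rg$ or $r(gu)$ into $\{w,w'\}$, using $\ell(w'u)=\ell(w)-1$), so $\ell(rg)\le\ell(w)-1$ and $r\in X_g^w$; together with $X_g^w\subseteq X_{gu}^{wu}$, $\gamma\in X_{gu}^{wu}\setminus X_g^w$ and $s_\beta\in X_g^w$ this yields $X_{gu}^{wu}=X_g^w\cup\{\gamma\}$.

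\emph{Main difficulty.} Everything is elementary once the dihedral picture is in place; the only delicate point is the length bookkeeping in Part (3), and the argument genuinely rests on the two identities $swu=w'$ and ``$u$ is a right descent of $w'$ but not of $w$''. If one prefers a completely mechanical treatment, one may identify $W$ with $\Z$ via $w\mapsto\pm\ell(w)$ (sign recording the first letter of $w$), under which the Bruhat order, left and right multiplication, and the reflections all become explicit affine maps of $\Z$, and each $X_h^{w}$ becomes the set of odd integers in an interval of length $2\ell(w)$; the three assertions then follow by inspecting those intervals in the two cases $\ell(w)\equiv\ell(g)$ and $\ell(w)\not\equiv\ell(g)\pmod 2$.
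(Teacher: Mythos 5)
Your proof is correct and takes essentially the same approach as the paper: both exploit the two rank-two facts that reflections are exactly the odd-length elements and that Bruhat order in the infinite dihedral group is governed purely by length, then carry out a careful length bookkeeping to sort the roots into the sets $X_g^w$, $X_{gu}^w$, $X_{gu}^{wu}$. The only organizational difference is that for the final equality $X_{gu}^{wu}=X_g^w\cup\{\gamma\}$ the paper invokes Deodhar's Property Z to get $X_{gu}^{wu}=X_g^w\cup X_{gu}^w$ and then deduces the claim, whereas you replace this by a second direct length count; this is a cosmetic distinction, since in rank two Property Z is itself immediate from the length description of the Bruhat order.
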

\begin{proof}
Since our Coxeter system has rank two, for $x\in W$, there exists $\alpha\in\Phi^+$ such that $s_{\alpha} = x$ if and only if $\ell(x)$ is odd.
One of elements in ${wg^{-1},swg^{-1}}$ has the odd length.
Hence there exists $\beta\in\Phi^+$ such that $s_{\beta}\in  \{wg^{-1},swg^{-1}\}$.
If $s_{\beta} = wg^{-1}$, then $s_{\beta}g = w\le w$.
If $s_{\beta} = swg^{-1}$, then $s_{\beta}g = sw\le w$.
Hence $\beta\in X_g^{w}$ and we get (1).
The proof of (2) is similar.

We prove (3).
Let $\delta\in X_{g}^w$.
Then $s_{\delta}g\le w$.
Since our Coxeter system is of rank two, if $\ell(s_{\delta}gu)\le \ell(w) - 1$, we have $s_{\delta}gu\le w$.
Hence $\delta\in X_{gu}^w$.
Therefore if $\ell(s_{\delta}g)\le \ell(w) - 2$, then since $\ell(s_{\delta}gu)\le \ell(s_{\delta}g) + 1$, we have $\delta\in X_{gu}^w$.

Let $u'$ be the element in $S$ which is not $u$.
Then we have $sw < w$, $wu' < w$.
\begin{itemize}
\item If $\ell(s_{\delta}g) = \ell(w) - 1$, then $s_{\delta}g = sw$ or $wu'$.
If $s_{\delta}g = sw$, then $s_{\delta} = swg^{-1}$, hence $\delta = \beta$.
If $s_{\delta}g = wu'$ and $w\ne u'$, the reduced expression of $wu'$ ends with $u$.
Hence $s_{\delta}gu = wu'u\le w$.
Therefore $\delta\in X_{gu}^{w}$.
If $w = u'$ then $u' = s$ since $sw < w$.
We have $\ell(s_{\delta}g) = \ell(w) - 1 = 0$, hence $s_{\delta}g = 1$.
Since $g\le w$, we have $g = u'$ or $g = 1$.
Since $\ell(s_{\delta})$ is odd, by $s_{\delta}g = 1$, we have $g = u'$ and $s_{\delta} = u' = swg^{-1}$.
Hence $\delta = \beta$.
\item If $\ell(s_{\delta}g) = \ell(w)$, then $s_{\delta}g = w$.
Hence $s_{\delta} =  wg^{-1}$.
Therefore $\delta = \beta$.
\end{itemize}
In any case, if $\delta\in X_{g}^{w}$, then $\delta = \beta$ or $\delta\in X_{gu}^w$.
Hence $X_{g}^w\setminus\{\beta\}\subset X_{gu}^w$.
If $\delta = \gamma$, the element $s_{\delta}g$ is $wu$ or $swu$.
Since $wu > w$, we have $s_{\delta}g\le w$ only when $s_{\delta}g = swu = w$.
Therefore $\delta = \beta$.
Hence $X_{g}^w\setminus\{\beta\}\subset X_{gu}^w\setminus\{\gamma\}$.
By replacing $g$ with $gu$, we get the reverse inclusion.

Since $wu > w$, for any $v\in W$, $vu\le wu$ if and only if $v\le w$ or $vu\le w$ by Property Z in \cite{MR0435249}.
Hence $X_{gu}^{wu} = X_{g}^{w}\cup X_{gu}^{w}$.
Therefore we get the last part of (3).
\end{proof}

\begin{lem}\label{lem:inductive formula on X, easy case}
Let $w,g\in W$, $u\in S$ such that $wu > w$, $sw <w$, $g\le w$ and $gu\not\le w$.
Then $X_{gu}^{wu} = X_{g}^{w}\cup \{g(\alpha_u)\}$.
\end{lem}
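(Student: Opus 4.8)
The plan is to imitate the proof of Lemma~\ref{lem:inductive formula on X}. First I would record a small reduction: since $g\le w$ while $gu\not\le w$, we cannot have $gu<g$ (otherwise $gu\le g\le w$), so $gu>g$; consequently $\ell(gu)=\ell(g)+1$ and $g(\alpha_u)\in\Phi^+$. Throughout I would freely use two features of the universal Coxeter system of rank two: for $v,w\in W$ one has $v\le w$ if and only if $\ell(v)<\ell(w)$ or $v=w$ (this follows from the subword criterion together with the uniqueness of reduced expressions), and a reflection changes the length of any element by an odd amount, so in particular it cannot fix the length. Note also $\ell(w)\ge 1$ since $sw<w$, so there are exactly two elements of length $\ell(w)$; hence if $gu$ or $v$ has length $\ell(w)$ but is not $w$, it must be the other element of that length. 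From $g\le w$ and $gu\not\le w$ we moreover get $\ell(g)\le\ell(w)$ and $\ell(gu)\ge\ell(w)$, so $\ell(gu)\in\{\ell(w),\ell(w)+1\}$.

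Exactly as in Lemma~\ref{lem:inductive formula on X}, applying Property Z of \cite{MR0435249} to $s_\alpha g$ (using $wu>w$) yields $X_{gu}^{wu}=X_g^w\cup X_{gu}^w$. Since $g(\alpha_u)\in\Phi^+$ and $s_{g(\alpha_u)}(gu)=(gug^{-1})(gu)=g\le w$, we have $g(\alpha_u)\in X_{gu}^w$, and therefore $X_g^w\cup\{g(\alpha_u)\}\subseteq X_{gu}^{wu}$. So the whole content is the reverse inclusion, which reduces to showing $X_{gu}^w\subseteq X_g^w\cup\{g(\alpha_u)\}$.

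For this, I would take $\alpha\in X_{gu}^w$ with $\alpha\ne g(\alpha_u)$ and set $v=s_\alpha g$, so that $vu=s_\alpha gu\le w$; moreover $v\ne gu$, because $s_\alpha gu=g$ is equivalent to $s_\alpha=gug^{-1}$, i.e.\ to $\alpha=g(\alpha_u)$. The claim is $v\le w$, which is exactly $\alpha\in X_g^w$. Assume the contrary; then $\ell(v)\ge\ell(w)$ and $v\ne w$, and since $\ell(vu)\le\ell(w)$ one also has $\ell(v)\le\ell(w)+1$. Now split according to the two possibilities for $\ell(gu)$. If $\ell(gu)=\ell(w)+1$, then $\ell(g)=\ell(w)$, so $g=w$ and $v=s_\alpha w$ has length $\ne\ell(w)$; combined with $\ell(w)\le\ell(v)\le\ell(w)+1$ this forces $\ell(v)=\ell(w)+1$, hence $\ell(vu)=\ell(w)$, hence $vu=w$, hence $v=wu=gu$, contradicting $v\ne gu$. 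If $\ell(gu)=\ell(w)$, then $\ell(g)=\ell(w)-1$; from $g=s_\alpha v$ and the parity of $s_\alpha$ we get $\ell(v)\equiv\ell(w)\pmod 2$, so with $\ell(w)\le\ell(v)\le\ell(w)+1$ we get $\ell(v)=\ell(w)$, and then $v$ and $gu$ are both the element of length $\ell(w)$ different from $w$, so $v=gu$, again a contradiction. This proves $v\le w$, and combining the two inclusions gives $X_{gu}^{wu}=X_g^w\cup\{g(\alpha_u)\}$.

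The reduction and the appeal to Property Z are routine; the part I expect to need care is the last paragraph, where one must keep track of the two elements of each length in the infinite dihedral group and repeatedly exploit that multiplying by a reflection flips the parity of the length. It is easy to miss a case there, so I would organize it strictly as the dichotomy on $\ell(gu)\in\{\ell(w),\ell(w)+1\}$ above rather than, say, splitting on $\ell(vu)$.
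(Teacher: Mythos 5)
Your proof is correct and takes essentially the same route as the paper: both establish the forward inclusion via Property Z together with the observation that $s_{g(\alpha_u)}$ sends $gu$ back to $g$, and both prove the reverse inclusion by exploiting that the rank-two condition forces $g$ into the top two length strata of $\{v : v\le w\}$. Your organization of the casework around $\ell(gu)\in\{\ell(w),\ell(w)+1\}$ and a parity argument is a presentational variant of the paper's explicit identification $g\in\{w,sw\}$ followed by the determination that $s_\delta gu\in\{w,sw\}$.
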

\begin{proof}
By Property Z in \cite{MR0435249}, for any $x\in W$, $x\le w$ implies $xu\le wu$.
Applying this to $x = s_{\gamma}g$ for $\gamma\in X_{g}^w$, we have $X_{g}^w\subset X_{gu}^{wu}$.
Since $g\le w$, we have $s_{g(\alpha_u)}g = gu\le wu$.
Therefore $g(\alpha_u)\in X_{gu}^{wu}$.
Hence $X_{g}^{w}\cup \{g(\alpha_u)\}\subset X_{gu}^{wu}$.

If $\ell(g)\le \ell(w) - 2$, then $\ell(gu)\le \ell(w) - 1$, hence $gu\le w$ since $\#S = 2$.
Therefore $\ell(g) = \ell(w) - 1$ or $\ell(w)$.
If $\ell(g) = \ell(w) - 1$, then $g = sw$ since $gu\not\le w$.
If $\ell(g) = \ell(w)$, then $g = w$.
Hence $g = w$ or $sw$.

Let $\delta\in X_{gu}^{wu}\setminus X_{g}^{w}$.
Then $s_{\delta}gu\le wu$ and $s_{\delta}g\not\le w$.
By Property Z \cite{MR0435249}, $s_{\delta}gu < s_{\delta}g$ and $s_{\delta}gu\le w$.
Therefore, from the discussion in the previous paragraph, $s_{\delta}gu = w$ or $s_{\delta}gu = sw$.
Combining $g\in \{w,sw\}$, we have $(g,s_{\delta}) = (w,wuw^{-1})$ or $(sw,swu(sw)^{-1})$.
In any case, we have $s_{\delta} = gug^{-1}$ and $\delta = g(\alpha_u)$.
\end{proof}

\begin{lem}\label{lem:inductive formula on a,k,X, not changed}
Let $\underline{w} = (s_1,\ldots,s_l)\in S^l$ such that $s_{i - 1}\ne s_i$ for any $i$ and $g\in W$.
Set $u = s_l$.
\begin{enumerate}
\item $a^{\underline{w}}(g) = a^{\underline{w}}(gu)$.
\item $k^{\underline{w}}_g = k^{\underline{w}}_{gu}$.
\item $X_{g}^{w} = X_{gu}^{w}$.
\end{enumerate}
\end{lem}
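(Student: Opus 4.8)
\emph{Proof strategy.} I would prove (1) first, then deduce (3) from a basic fact about the Bruhat order, and leave (2) — the only part requiring real work — for last. For (1), write $\underline{w} = (\underline{w}',u)$ with $\underline{w}' = (s_1,\dots,s_{l-1})$ and apply Lemma~\ref{lem:inductive formula of a} twice: it gives $a^{\underline{w}}(g) = g(\alpha_u)^{-1}\bigl(a^{\underline{w}'}(g) - a^{\underline{w}'}(gu)\bigr)$ and, using $u^2 = e$, $a^{\underline{w}}(gu) = (gu)(\alpha_u)^{-1}\bigl(a^{\underline{w}'}(gu) - a^{\underline{w}'}(g)\bigr)$. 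Since $(gu)(\alpha_u) = g\bigl(u(\alpha_u)\bigr) = g(-\alpha_u) = -g(\alpha_u)$, the two right-hand sides agree:
\[
a^{\underline{w}}(gu) = -\frac{1}{g(\alpha_u)}\bigl(a^{\underline{w}'}(gu) - a^{\underline{w}'}(g)\bigr) = a^{\underline{w}}(g).
\]

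For (2) and (3) the key preliminary is this. Since $s_{i-1}\neq s_i$ for all $i$, the word $\underline{w}$ is reduced, so $\ell(w)=l$ and $w$ has a reduced expression ending in $u = s_l$, i.e.\ $wu < w$. As $W$ is the universal Coxeter group of rank two, reduced words are unique and alternating, so an elementary subword argument shows that $x\le w$ if and only if $\ell(x)<l$ or $x=w$; from this one deduces that $x\le w \Leftrightarrow xu\le w$ for every $x\in W$ whenever $wu<w$. Applying this with $x=s_\gamma g$ yields, for each $\gamma\in\Phi^+$, that $\gamma\in X_g^w \Leftrightarrow s_\gamma g\le w \Leftrightarrow s_\gamma gu\le w \Leftrightarrow \gamma\in X_{gu}^w$, which is exactly (3). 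The same equivalence reduces (2) to the case $g\le w$: otherwise $gu\not\le w$ too, and $k_g^{\underline{w}} = k_{gu}^{\underline{w}} = 0$ by definition.

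For (2) with $g\le w$, observe that in the defining formula the colour $\sigma^{l-1}(Z)$ and the upper index $l-1$ of the two-coloured binomial coefficient do not depend on $g$; only $\ell(g)$ and whether $s_1g>g$ or $s_1g<g$ enter. Replacing $g$ by $gu$ if necessary — legitimate since $g\le w\Leftrightarrow gu\le w$ and $(gu)u=g$, and this also disposes of the case $g=w$ — I may assume $gu>g$, so that $g$ does not end in $u$ and $\ell(gu)=\ell(g)+1$. I would then split according to whether $g$ begins with $s_1$ (treating the boundary case $g=e$ separately), using that in the alternating word $\underline{w}$ one has $s_l=s_1$ exactly when $l$ is odd: combined with the fact that $g$ does not end in $u$, this pins down the parity of $l-\ell(g)$, and a short computation then shows that the lower index $\lfloor(l-\ell(g)-1)/2\rfloor$ or $\lfloor(l-\ell(g))/2\rfloor$ attached to $g$ equals the one attached to $gu$, giving $k_g^{\underline{w}}=k_{gu}^{\underline{w}}$. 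The main obstacle is precisely this bookkeeping: one must see that the change of $\ell(g)$ by one and the possible flip of the left-descent status of $s_1$ cancel inside the floor functions. Everything else — parts (1) and (3) and the reduction steps — is formal.
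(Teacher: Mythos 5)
Your proposal is correct and follows essentially the same route as the paper: part (1) via the recursion of Lemma~\ref{lem:inductive formula of a} (the paper phrases it as $a^{(\underline{w},u)}(g)=0$ because $(\underline{w},u)$ is non-reduced, which after expanding is exactly your direct cancellation using $gu(\alpha_u)=-g(\alpha_u)$), part (3) via the standard fact that $wu<w$ gives $x\le w\Leftrightarrow xu\le w$ (paper cites Deodhar's Property Z, you derive it from the explicit rank-two Bruhat order), and part (2) via the same parity/floor bookkeeping, after normalizing $g<gu$ and separating the case $g=e$. The only cosmetic difference is that your (1) peels off the last letter rather than appending it; the content is identical.
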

\begin{proof}
We may assume $g < gu$ by replacing $g$ with $gu$ if necessary.
We also may assume that $s_1 = s$ by swapping $s$ with $t$ if necessary.
(1) follows from Lemma~\ref{lem:inductive formula of a} and $a^{(\underline{w},u)}(g) = 0$.

For (2), first we assume $sg > g$ and $g\ne 1$.
Then the reduced expression of $g$ has a form $g = t\cdots u'$ where $u'\in S$ is the element which is not $u$, namely the reduced expression starts with $t$ and ends with $u'$.
Since $\underline{w} = (s,\ldots,u)$ and $s_{i - 1}\ne s_i$ for any $i$, we have $\ell(g)\equiv \ell(\underline{w})\pmod{2}$.
Hence the lemma follows from the definition of $k_{g}^{\underline{w}}$.
The proof in the case of $sg < g$, $g\ne 1$ is similar.

Assume $g = 1$.
If $u = s$, then $s_1 = s_l = s$, hence $\ell(\underline{w})$ is odd.
If $u = t$, then $s_1 = s$ and $s_l = t$.
Hence $\ell(\underline{w})$ is even.
In both cases, we can confirm $k_g^{\underline{w}} = k_{gu}^{\underline{w}}$ by the definition.

Since $wu < w$, by Property Z in \cite{MR0435249} we have $s_{\gamma}g \le w$ if and only if $s_{\gamma}gu\le w$.
(3) follows.
\end{proof}

\begin{proof}[Proof of Theorem~\ref{thm:image of 1}]
We prove the theorem by induction on $\ell(\underline{w})$.
If $\ell(\underline{w}) = 0$, then this is trivial.
Let $u\in S$ and we prove that the theorem is true for $(\underline{w},u)$ assuming that the theorem is true for $\underline{w}$.
If $(\underline{w},u)$ is not a reduced expression, then both sides of the theorem are zero.
Hence we may assume $(\underline{w},u)$ is a reduced expression.
By the previous lemma, we also may assume $gu > g$.
If $g\not\le w$, then by Property Z \cite{MR0435249}, $g\not\le wu$.
Hence both sides are zero.

Take $s_1,\ldots,s_l\in S$ such that $\underline{w} = (s_1,\dots,s_l)$.
If $g\le w$ and $gu\not\le w$, then $a^{\underline{w}}(gu) = 0$.
By Lemma~\ref{lem:inductive formula of a}, inductive hypothesis and Lemma~\ref{lem:inductive formula on X, easy case}, 
\[
a^{(\underline{w},u)}(g) = \frac{a^{\underline{w}}(g)}{g(\alpha_u)} = \frac{k_{g}^{\underline{w}}}{\prod_{\gamma\in X_g^w}\gamma}\frac{1}{g(\alpha_u)} = \frac{k_g^{\underline{w}}}{\prod_{\gamma\in X_{gu}^{(\underline{w},u)}}\gamma}.
\]
As in the proof of Lemma~\ref{lem:inductive formula on X, easy case}, we have $g= w$ or $g = s_1w$ (the latter does not happen when $l = 0$).
Hence $k_{g}^{\underline{w}} = k_{g}^{(\underline{w},u)} = 1$ from the definitions.
Therefore the theorem holds in this case.

We assume $g,gu\le w$.
Then $\ell(\underline{w}) > 0$.
We may assume $s_1 = s$ by swapping $(s,X)$ with $(t,Y)$ if necessary.
By Lemma~\ref{lem:inductive formula of a} and inductive hypothesis, we have
\[
a^{(\underline{w},u)}(g) = \frac{1}{g(\alpha_u)}(a^{\underline{w}}(g) - a^{\underline{w}}(gu))
=
\frac{1}{g(\alpha_u)}\left(\frac{k_g^{\underline{w}}}{\prod_{\delta\in X_g^w}\delta} - \frac{k_{gu}^{\underline{w}}}{\prod_{\delta\in X_{gu}^{w}}\delta}\right).
\]
Take $\beta,\gamma\in\Phi^+$ as in Lemma~\ref{lem:inductive formula on X}.
Then by Lemma~\ref{lem:inductive formula on X}, the right hand side is
\[
\frac{1}{\prod_{\delta\in X_{g}^w\setminus\{\beta\}}\delta}\frac{1}{\beta\gamma}\frac{1}{g(\alpha_u)}
(k_g^{\underline{w}}\gamma - k_{gu}^{\underline{w}}\delta)
=
\frac{1}{\prod_{\delta\in X_{g}^{wu}}\delta}\frac{1}{g(\alpha_u)}
(k_g^{\underline{w}}\gamma - k_{gu}^{\underline{w}}\delta).
\]
Hence it is sufficient to prove that $k_g^{\underline{w}}\gamma - k_{gu}^{\underline{w}}\delta = k_g^{(\underline{w},u)}g(\alpha_u)$.
Since $gu > g$, the reduced expression of $g$ ends with the simple reflection which is not $u$.
Hence the reduced expression of $gug^{-1}$ can be obtained by concatenating the reduced expressions of $g$, $u$ and $g^{-1}$.
Therefore we have $\ell(gug^{-1}) = \ell(g) + \ell(u) + \ell(g^{-1}) = 2\ell(g) + 1$.
Moreover, if $sg > g$, then we have $sgug^{-1} > gug^{-1}$.

First we assume $sg > g$ and $g\ne 1$.
Then $ss_{g(u)} = sgug^{-1} > gug^{-1}$.
Hence 
\[
g(u) = [\ell(g)]_{X}\alpha_s + [\ell(g) + 1]_{Y}\alpha_t
\]
by Lemma~\ref{lem:formula of root}.
Since $gu > g$ and $wu > w$, the reduced expressions of $g$ and $w$ ends with the same simple reflection.
Namely if $u'\in S$ is the element which is not $u$, then the reduced expression of $w$ is $w = s\cdots u'$ and the reduced expression of $g$ is $g = t\cdots u'$ since we assumed $sg > g$.
Since $g\le w$, the last $\ell(g)$-letters of the reduced expression of $w$ is the reduced expression of $g$.
Hence $\ell(wg^{-1}) + \ell(g) = \ell(w)$ and the reduced expression of $wg^{-1}$ starts with $s$ and ends with $s$.
Therefore $twg^{-1} > wg^{-1}$ and $s_{\beta} = wg^{-1}$.
Hence by Lemma~\ref{lem:formula of root}, we have
\begin{align*}
\beta & = \left[\frac{\ell(wg^{-1}) + 1}{2}\right]_X\alpha_s + \left[\frac{\ell(wg^{-1}) - 1}{2}\right]_Y\alpha_t\\
& = \left[\frac{\ell(w) - \ell(g) + 1}{2}\right]_X\alpha_s + \left[\frac{\ell(w) - \ell(g) - 1}{2}\right]_Y\alpha_t.
\end{align*}
A calculation of $\gamma$ is similar.
We have $\ell(wug^{-1}) = \ell(g) + \ell(u) + \ell(w^{-1})$ and the reduced expression of $wug^{-1}$ starts with $s$ and ends with $t$.
Therefore $\ell(swug^{-1}) = \ell(wug^{-1}) - 1$, $s_{\gamma} = swug^{-1}$ and $s(swug^{-1}) > swug^{-1}$.
Hence by Lemma~\ref{lem:formula of root}, we have
\[
\gamma = \left[\frac{\ell(w) + \ell(g) - 1}{2}\right]_X\alpha_s + \left[\frac{\ell(w) + \ell(g) + 1}{2}\right]_Y\alpha_t.
\]
Put $m = (\ell(w) - \ell(g) - 1)/2$ and $n = \ell(g)$.
Then we have
\begin{align*}
g(\alpha_u) & = [n]_X\alpha_s + [n + 1]_Y\alpha_t,\\
\beta & = [m + 1]_X\alpha_s + [m]_Y\alpha_t,\\
\gamma & = [m + n]_X\alpha_s + [m + n + 1]_Y\alpha_t.
\end{align*}
Therefore we have
\[
k_g^{\underline{w}}\gamma - k_{gu}^{\underline{w}}\beta
=
(k_g^{\underline{w}}[m + n]_X - k_{gu}^{\underline{w}}[m + 1]_X)\alpha_s + (k_{g}^{\underline{w}}[m + n + 1]_Y - k_{gu}^{\underline{w}}[m]_Y)\alpha_t.
\]
By the definition, we have
\begin{align*}
k_g^{\underline{w}} & = 
\sqbinom{2m + n}{m}_{\sigma^{2m + n}(X)}
=
\frac{[m + n + 1]_{\sigma^{2m + n}(X)}}{[m]_{\sigma^{2m + n}(X)}}\sqbinom{2m + n}{m - 1}_{\sigma^{2m + n}(X)}\\
& =
\frac{[m + n + 1]_{\sigma^{n}(X)}}{[m]_{\sigma^{n}(X)}}k_{gu}^{\underline{w}}.
\end{align*}
Hence, 
\begin{align*}
k_g^{\underline{w}}\gamma - k_{gu}^{\underline{w}}\beta
& =
\frac{k_{gu}^{\underline{w}}}{[m]_{\sigma^n(X)}}\left(([m + n + 1]_{\sigma^{n}(X)}[m + n]_X - [m + 1]_X[m]_{\sigma^{n}(X)})\alpha_s\right.\\
& \left.\qquad + ([m + n + 1]_{\sigma^n(X)}[m + n + 1]_Y - [m]_Y[m]_{\sigma^n(X)})\alpha_t\right).
\end{align*}
By Lemma~\ref{lem:quantum, (m + n + 1)(m + n) - (m + 1)m = n(2m + n + 1)}, this is equal to
\begin{align*}
& \frac{k_{gu}^{\underline{w}}}{[m]_{\sigma^n(X)}}([n]_X[2m + n + 1]_{\sigma^{m + 1}(X)}\alpha_s + [n + 1]_{Y}[2m + n + 1]_{\sigma^{m + 1}(X)}\alpha_t)\\
& =
\frac{k_{gu}^{\underline{w}}}{[m]_{\sigma^n(X)}}[2m + n + 1]_{\sigma^{m + 1}(X)}g(\alpha_u).
\end{align*}
Hence it is sufficient to prove
\[
k_{gu}^{\underline{w}}\frac{[2m + n + 1]_{\sigma^{m + 1}(X)}}{[m]_{\sigma^n(X)}} = k_{gu}^{(\underline{w},u)}.
\]
This follows immediately from Lemma~\ref{lem:quantim binom}.

The case of $tg > g$ is similar.
By Lemma~\ref{lem:formula of root}, we have
\[
g(\alpha_u) = [\ell(g) + 1]_X\alpha_s + [\ell(g)]_Y\alpha_t.
\]
The reduced expressions of $w$ and $g$ end the same reflection, hence $\ell(wg^{-1}) = \ell(w) - \ell(g)$.
The reduced expression of $g$ starts with $s$.
Hence the reduced expression of $wg^{-1}$ starts with $s$, ends with $t$.
Hence $s_{\beta} = swg^{-1}$, $s(swg^{-1}) > swg^{-1}$ and $\ell(s_{\beta}) = \ell(w) - \ell(g) - 1$.
Hence by Lemma~\ref{lem:formula of root}, we have
\[
\beta = \left[\frac{\ell(w) - \ell(g)}{2} - 1\right]_X\alpha_s + \left[\frac{\ell(w) - \ell(g)}{2}\right]_Y \alpha_t.
\]
We have $\ell(wug^{-1}) = \ell(w) + \ell(g) + 1$ and the reduced expression starts with $s$ and ends with $s$.
Hence $s_{\gamma} = wug^{-1}$, $ts_{\gamma} > s_{\gamma}$,  and $\ell(s_{\gamma}) = \ell(g) + \ell(w) + 1$.
Therefore by Lemma~\ref{lem:formula of root}, we have
\[
\gamma = \left[\frac{\ell(w) + \ell(g)}{2} + 1\right]_X\alpha_s + \left[\frac{\ell(w) + \ell(g)}{2}\right]_Y\alpha_t.
\]
Put $m = (\ell(w) - \ell(g))/2 - 1$ and $n = \ell(g) + 1$.
Then 
\begin{align*}
g(\alpha_u) & = [n]_X\alpha_s + [n - 1]_Y\alpha_t,\\
\beta & = [m]_X\alpha_s + [m + 1]_Y \alpha_t,\\
\gamma & = [m + n + 1]_X\alpha_s + [m + n]_Y\alpha_t.
\end{align*}
We have
\[
k_{g}^{\underline{w}} = \frac{[m + n]_{\sigma^{n}(X)}}{[m + 1]_{\sigma^{n}(X)}}k_{gu}^{\underline{w}}.
\]
Therefore, by Lemma~\ref{lem:quantum, (m + n + 1)(m + n) - (m + 1)m = n(2m + n + 1)}, we have
\begin{align*}
& k_g^{\underline{w}}\gamma - k_{gu}^{\underline{w}}\beta\\
& =
\frac{k_{gu}^{\underline{w}}}{[m + 1]_{\sigma^n(X)}}\left(([m + n]_{\sigma^{n}(X)}[m + n + 1]_X - [m]_X[m + 1]_{\sigma^{n}(X)})\alpha_s\right.\\
& \left.\qquad + ([m + n]_Y[m + n]_{\sigma^{n}(X)} - [m + 1]_Y[m + 1]_{\sigma^{n}(X)})\alpha_t\right).\\
& = \frac{k_{gu}^{\underline{w}}}{[m + 1]_{\sigma^n(X)}}([n]_{X}[2m + n + 1]_{\sigma^{m}(X)} + [n- 1]_Y[2m + n + 1]_{\sigma^m(X)})\\
& = \frac{k_{gu}^{\underline{w}}}{[m + 1]_{\sigma^n(X)}}[2m + n + 1]_{\sigma^m(X)}g(\alpha_u).
\end{align*}
Therefore it is sufficient to prove
\[
\frac{[2m + n + 1]_{\sigma^m(X)}}{[m + 1]_{\sigma^n(X)}}k_{gu}^{\underline{w}}
=
k_{gu}^{(\underline{w},u)}
\]
which is again an immediate consequence of Lemma~\ref{lem:quantim binom}.

We assume $g = 1$ and $u = t$.
Then one can check that formulas for $g(\alpha_u),\beta,\gamma$ in the case of $sg > g,g\ne 1$ hold.
Hence the theorem follow from the calculations in this case.
If $g = 1$ and $u = s$, then one can use the calculations in the case of $tg > g,g\ne 1$.
\end{proof}

\section{A homomorphism between Bott-Samelson bimodules}\label{sec:A homomorphism between Bott-Samelson bimodules}
\subsection{Finite Coxeter group of rank two and a realization}\label{subsec:Finite Coxeter group of rank two and a realization}
We add the tilde to the notation in the previous section, namely $(\widetilde{W},\widetilde{S})$ is the universal Coxeter system of rank $2$, $\widetilde{V}$ is the free $\Z[\widetilde{X},\widetilde{Y}]$-module with the action of $\widetilde{W}$, $[n]_{\widetilde{X}},[n]_{\widetilde{Y}}\in \Z[\widetilde{X},\widetilde{Y}]$ is the two-colored quantum numbers, etc.

The notation without tilde will be used for non-universal version.
Let $(W,S)$ be a Coxeter system such that $S = \{s,t\}$, $s\ne t$.
We assume that the order $m_{s,t}$ of $st$ is finite.
Let $\Coef$ be a commutative integral domain and $(V,\{\alpha_s,\alpha_t\},\{\alpha_s^\vee,\alpha_t^\vee\})$ a realization~\cite[Definition~3.1]{MR3555156}, namely $V$ is a free $\Coef$-module of finite rank with an action of $W$, $\alpha_s,\alpha_t\in V$ and $\alpha_s^\vee,\alpha_t^\vee\in \Hom_{\Coef}(V,\Coef)$ such that
\begin{itemize}
\item $\langle \alpha_s^\vee,\alpha_s\rangle = \langle \alpha_t^\vee,\alpha_t\rangle = 2$.
\item $s(v) = v - \langle \alpha_s^\vee,v\rangle\alpha_s$, $t(v) = v - \langle \alpha_t^\vee,v\rangle\alpha_t$ for any $v\in V$.
\item $\alpha_s,\alpha_t\ne 0$ and $\alpha_s^\vee,\alpha_t^\vee\colon V\to \Coef$ are surjective.
\item $[m_{s,t}]_{\widetilde{X}}(-\langle \alpha_s^\vee,\alpha_t\rangle,-\langle \alpha_t^\vee,\alpha_s\rangle) = [m_{s,t}]_{\widetilde{Y}}(-\langle \alpha_s^\vee,\alpha_t\rangle,-\langle \alpha_t^\vee,\alpha_s\rangle) = 0$.
\end{itemize}

The map $\tilde{s}\mapsto s$, $\tilde{t}\mapsto t$ gives a surjective homomorphism $\widetilde{W}\to W$.
Set $X = -\langle \alpha_s^\vee,\alpha_t\rangle$, $Y = -\langle \alpha_t^\vee,\alpha_s\rangle$.
Then $\tilde{\alpha}_s\mapsto \alpha_s$, $\tilde{\alpha}_t\mapsto \alpha_t$ gives a $\Z[\widetilde{X},\widetilde{Y}]$-module homomorphism $\widetilde{V}\to V$ which commutes with the actions of $\widetilde{W}$ where we regard $V$ as a $\Z[\widetilde{X},\widetilde{Y}]$-module via $\Z[\widetilde{X},\widetilde{Y}]\to \Coef$ defined by $\widetilde{X}\mapsto X$ and $\widetilde{Y}\mapsto Y$.
The image of $[n]_{\widetilde{X}}$ (resp.\ $[n]_{\widetilde{Y}}$) is denoted by $[n]_X$ (resp.\ $[n]_Y$).
We also have $\sqbinom{n}{m}_X,\sqbinom{n}{m}_Y\in \Coef$.

Let $R$ (resp.\ $\widetilde{R}$) be the symmetric algebra of $V$ (resp.\ $\widetilde{V}$).
We regard $R$ as a graded $\Coef$-algebra via $\deg(V) = 2$.
We put $\partial_u(p) = (p - u(p))/\alpha_u$ for $p\in R$.
The maps $\widetilde{V}\to V$ and $\Z[\widetilde{X},\widetilde{Y}]\to \Coef$ induce $\widetilde{R}\to R$.
We defined an element $\tilde{a}^{\underline{\tilde{w}}}(\tilde{g})\in \widetilde{R}[\tilde{w}(\tilde{\alpha}_{\tilde{u}})^{-1}\mid \tilde{w}\in \widetilde{W},\tilde{u}\in \widetilde{S}]$.
Let $Q$ be the field of fractions of $R$.
The image of $\tilde{a}^{\underline{\tilde{w}}}(\tilde{g})$ in $Q$ is denoted by $a^{\underline{\tilde{w}}}(\tilde{g})\in Q$.

As some of them are appeared already, objects related to the universal Coxeter system is denoted with the tilde and the corresponding letter without the tilde means the image in the finite Coxeter system.
For example, if $\underline{\tilde{w}} = (\tilde{s}_1,\tilde{s}_2,\ldots)$ is a sequence of elements in $\widetilde{S}$, then $\underline{w} = (s_1,s_2,\ldots)$ is the corresponding sequence in $S$.
As we have already explained, a sequence is denoted with the underline and removing the underline means the product of elements in the sequence.
Hence $\tilde{w} = \tilde{s}_1\tilde{s}_2\cdots\in \widetilde{W}$ and $w = s_1s_2\cdots \in W$.
For each root $\tilde{\alpha}\in \widetilde{\Phi}$, we have $\tilde{s}_{\tilde{\alpha}}\in \widetilde{W}$ and $s_{\tilde{\alpha}}\in W$.

Set $\underline{\tilde{x}} = (\tilde{s},\tilde{t},\ldots)\in S^{m_{s,t}}$ and $\underline{\tilde{y}} = (\tilde{t},\tilde{s},\ldots)\in S^{m_{s,t}}$.
The sequences $\underline{x}$ and $\underline{y}$ are the two reduced expressions of the longest element.
In general, for a sequence $\underline{w} = (s_1,s_2,\ldots,s_l)\in S^l$, we put
\[
\pi_{\underline{w}} = \prod_{i = 1}^ls_1\cdots s_{i - 1}(\alpha_{s_i})\in R.
\]
The two elements $\pi_{\underline{x}}$ and $\pi_{\underline{y}}$ are not the same in general.
By \cite[(7.9), (7.10)]{arXiv:2011.05432}, $\pi_{\underline{y}} = \pi_{\underline{x}}$ if $m_{s,t}$ is even and $\pi_{\underline{y}} = [m_{s,t} - 1]_X\pi_{\underline{x}}$ if $m_{s,t}$ is odd.
Put $\xi = [m_{s,t} - 1]_X$ if $m_{s,t}$ is even and $\xi = 1$ if $m_{s,t}$ is odd.
Then we have $\pi_{\underline{y}} = \xi [m_{s,t} - 1]_X\pi_{\underline{x}}$.
In particular, $\pi_{\underline{y}}\in \Coef^{\times}\pi_{\underline{x}}$ \cite[(6.11), (6.12)]{arXiv:2011.05432}.
The realization is even-balanced if and only if $\xi = 1$.

\begin{lem}
We have $[k]_Z[m_{s,t} - 1]_{\sigma^{k - 1}(Z)} = [m_{s,t} - k]_Z$.
\end{lem}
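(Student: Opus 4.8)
The plan is to obtain the identity from Lemma~\ref{lem:quantum, (m + n + 1)n - (m + n)(n  +1) = -m} together with the vanishing $[m_{s,t}]_X = [m_{s,t}]_Y = 0$. That vanishing holds in $\Coef$, hence in $R$ and in $Q$, since it is one of the realization axioms in~\ref{subsec:Finite Coxeter group of rank two and a realization} and the quantum numbers $[n]_X, [n]_Y \in \Coef$ are by definition the images of $[n]_{\widetilde{X}}, [n]_{\widetilde{Y}} \in \Z[\widetilde{X},\widetilde{Y}]$ under the specialization $\widetilde{X}\mapsto X$, $\widetilde{Y}\mapsto Y$. As every identity among quantum numbers proved in Section~\ref{sec:A calculation in the universal Coxeter system of rank two} is a polynomial identity in $\Z[\widetilde{X},\widetilde{Y}]$, it survives this specialization, so the whole computation may be run over $\Coef$. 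One may assume $1\le k\le m_{s,t}$, the case $k = 0$ (and $k = m_{s,t}$, where both sides vanish) being immediate.

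First I would invoke Lemma~\ref{lem:quantum, (m + n + 1)n - (m + n)(n  +1) = -m} with its two parameters taken to be $m_{s,t} - k$ and $k - 1$, so that the quantity playing the role of $m + n + 1$ there equals $m_{s,t}$. Its subtracted term then carries the factor $[m_{s,t}]_{\sigma(Z)} = 0$, and the lemma collapses to
\[
[m_{s,t} - k]_{\sigma^{k-1}(Z)} = [m_{s,t} - 1]_{Z}\,[k]_{\sigma(Z)}.
\]
This is already the desired identity up to a uniform shift of the colour. To remove it I would substitute $Z\mapsto \sigma^{k-1}(Z)$: the left-hand side becomes $[m_{s,t} - k]_{\sigma^{2(k-1)}(Z)} = [m_{s,t} - k]_{Z}$ because $\sigma^2 = \id$, while the right-hand side becomes $[m_{s,t} - 1]_{\sigma^{k-1}(Z)}\,[k]_{\sigma^{k}(Z)}$. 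Applying the parity identity $[n]_{Z} = [n]_{\sigma^{n}(Z)}$ from Lemma~\ref{lem:quantum num, even, odd} gives $[k]_{\sigma^{k}(Z)} = [k]_{Z}$, and transposing the two factors yields $[k]_{Z}\,[m_{s,t} - 1]_{\sigma^{k-1}(Z)} = [m_{s,t} - k]_{Z}$, which is the claim.

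I do not anticipate any genuine difficulty: the argument is just two short substitutions into already-established identities. The only point requiring attention is the bookkeeping of the colour involution $\sigma$, and this is handled precisely by the substitution $Z\mapsto\sigma^{k-1}(Z)$ together with the shift identity $[n]_Z=[n]_{\sigma^n(Z)}$.
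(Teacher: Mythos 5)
Your argument is correct, but it is a genuinely different route than the paper takes. The paper's proof of this lemma is a one-line citation to an external reference, namely formula (6.10) of Elias--Williamson's localized-calculus paper. You instead derive the identity self-containedly from the paper's own Lemma~\ref{lem:quantum, (m + n + 1)n - (m + n)(n  +1) = -m}, specialized at $(m,n) = (m_{s,t}-k,\,k-1)$ so that the subtracted term acquires the vanishing factor $[m_{s,t}]_{\sigma(Z)} = 0$, followed by the colour substitution $Z \mapsto \sigma^{k-1}(Z)$ and the parity identity $[k]_{\sigma^k(Z)} = [k]_Z$ from Lemma~\ref{lem:quantum num, even, odd}. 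The substitution bookkeeping is all correct, the boundary cases $k = 0$ and $k = m_{s,t}$ are rightly dismissed, and the remark that quantum-number identities proved in $\Z[\widetilde{X},\widetilde{Y}]$ specialize to $\Coef$ is exactly the justification needed to invoke $[m_{s,t}]_Z = 0$. What the paper's citation buys is brevity; what your derivation buys is that the lemma is now a consequence of machinery already developed in Section~2, removing an external dependency and making visible exactly which hypothesis (the vanishing $[m_{s,t}]_Z = 0$, i.e.\ part of the realization axioms) the identity rests on.
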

\begin{proof}
This follows from \cite[(6.10)]{arXiv:2011.05432}.
\end{proof}

\begin{lem}\label{lem:calculation of pi_x/x_g^w}
Let $\tilde{g}\in \widetilde{W}$ such that $\tilde{g}\le \tilde{x}$.
Then we have
\[
\frac{\prod_{\tilde{\delta}\in \widetilde{X}_{\tilde{g}}^{\tilde{x}}}\delta}{\pi_{\underline{x}}}
=
\begin{cases}
\xi\displaystyle\prod_{i = 1}^{\floor{\frac{m_{s,t} - \ell(g) - 1}{2}}}[m_{s,t} - 1]_{\sigma^{i - 1}(X)} & (\tilde{s}\tilde{g} > \tilde{g}),\\
\displaystyle\prod_{i = 1}^{\floor{\frac{m_{s,t} - \ell(g)}{2}}}[m_{s,t} - 1]_{\sigma^{i - 1}(X)} & (\tilde{s}\tilde{g} < \tilde{g}).
\end{cases}
\]
\end{lem}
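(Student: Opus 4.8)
The plan is to evaluate the numerator $\prod_{\tilde{\delta}\in\widetilde{X}_{\tilde{g}}^{\tilde{x}}}\delta$ and the denominator $\pi_{\underline{x}}$ separately, as explicit products of elements of $V$ whose coefficients are two-colored quantum numbers, and then to simplify the ratio using the preceding lemma (i.e.\ $[k]_Z[m_{s,t}-1]_{\sigma^{k-1}(Z)}=[m_{s,t}-k]_Z$) together with Lemma~\ref{lem:quantum num, even, odd}.

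First I would describe $\widetilde{X}_{\tilde{g}}^{\tilde{x}}$ combinatorially. Since $\widetilde{W}$ is the universal Coxeter group of rank two, each element has a unique alternating reduced word, its reflections are exactly the elements of odd length, and by Lemma~\ref{lem:formula of root} the reflection of length $2p+1$ whose reduced word begins with $\tilde{s}$ (resp.\ $\tilde{t}$) has positive root with image in $V$ equal to $[p+1]_X\alpha_s+[p]_Y\alpha_t$ (resp.\ $[p]_X\alpha_s+[p+1]_Y\alpha_t$). For such a reflection $\tilde{s}_{\tilde{\alpha}}$ and a fixed $\tilde{g}$, cancelling alternating words gives $\ell(\tilde{s}_{\tilde{\alpha}}\tilde{g})$ and its leading letter, so the condition $\tilde{s}_{\tilde{\alpha}}\tilde{g}\le\tilde{x}$ becomes an explicit inequality on $p$ (the borderline case $\ell(\tilde{s}_{\tilde{\alpha}}\tilde{g})=m_{s,t}$ being decided by the leading letter, which is controlled by $\tilde{g}\le\tilde{x}$). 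In the case $\tilde{s}\tilde{g}>\tilde{g}$ — that is, $\tilde{g}=\tilde{e}$ or $\tilde{g}$ begins with $\tilde{t}$ — this should show that $\widetilde{X}_{\tilde{g}}^{\tilde{x}}$ consists exactly of the reflections whose roots have image $[k]_X\alpha_s+[k-1]_Y\alpha_t$ for $1\le k\le\lfloor(m_{s,t}-\ell(g)+1)/2\rfloor$ and $[k-1]_X\alpha_s+[k]_Y\alpha_t$ for $1\le k\le\lfloor(m_{s,t}+\ell(g))/2\rfloor$; the case $\tilde{s}\tilde{g}<\tilde{g}$ is handled the same way. The point is that, by the lemma preceding Lemma~\ref{lem:formula of root} (and a short induction), the $i$-th factor $s_1\cdots s_{i-1}(\alpha_{s_i})$ of $\pi_{\underline{x}}$ equals $[i]_X\alpha_s+[i-1]_Y\alpha_t$ and the $i$-th factor of $\pi_{\underline{y}}$ equals $[i-1]_X\alpha_s+[i]_Y\alpha_t$; hence $\prod_{\tilde{\delta}\in\widetilde{X}_{\tilde{g}}^{\tilde{x}}}\delta$ is an initial segment of $\pi_{\underline{x}}$ times an initial segment of $\pi_{\underline{y}}$, of total length $m_{s,t}$.

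Next I would use that $(W,S)$ is finite: in $R$ one has $[m_{s,t}]_X=[m_{s,t}]_Y=0$, and from the preceding lemma, combined with Lemma~\ref{lem:quantum num, even, odd}, the $(m_{s,t}+1-k)$-th factor of $\pi_{\underline{x}}$ equals $[m_{s,t}-1]_{\sigma^{k}(X)}$ times the $k$-th factor of $\pi_{\underline{y}}$. Substituting this into the initial segment of $\pi_{\underline{y}}$ rewrites it as the complementary final segment of $\pi_{\underline{x}}$ times a product of quantum numbers $[m_{s,t}-1]_{\sigma^{\bullet}(X)}$; the $\pi_{\underline{x}}$-part then cancels the denominator completely, leaving only a product of $[m_{s,t}-1]$'s. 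Finally I would simplify this product using the relations among the $[m_{s,t}-1]_Z$ coming from the preceding lemma with $k=m_{s,t}-1$ — in particular $[m_{s,t}-1]_X[m_{s,t}-1]_Y=1$, and $[m_{s,t}-1]_X=[m_{s,t}-1]_Y$ when $m_{s,t}$ is even (Lemma~\ref{lem:quantum num, even, odd} (\ref{lem:enum:quantum num, odd})) — together with the floor identity $\lfloor(m_{s,t}-\ell(g)-1)/2\rfloor+\lfloor(m_{s,t}+\ell(g))/2\rfloor=m_{s,t}-1$ and the definition of $\xi$; this brings the expression into the two stated forms (the $\xi$ appears because $\prod_{i=1}^{m_{s,t}-1}[m_{s,t}-1]_{\sigma^{i-1}(X)}=\xi$). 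The case $\tilde{s}\tilde{g}<\tilde{g}$ is symmetric.

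The main obstacle is the first step: correctly identifying $\widetilde{X}_{\tilde{g}}^{\tilde{x}}$. A reflection can raise or lower $\ell(\tilde{g})$, and all the borderline cases — when $\tilde{s}_{\tilde{\alpha}}\tilde{g}$ is trivial, has length exactly $m_{s,t}$, or has a length and leading letter that depend on whether $\tilde{g}$ (or $\tilde{s}_{\tilde{\alpha}}\tilde{g}$) begins with $\tilde{s}$ or $\tilde{t}$ — need careful bookkeeping; once the two explicit root families are in hand, everything afterwards is forced. One could instead invoke Theorem~\ref{thm:image of 1}, which gives $\prod_{\tilde{\delta}\in\widetilde{X}_{\tilde{g}}^{\tilde{x}}}\tilde{\delta}=\tilde{k}_{\tilde{g}}^{\underline{\tilde{x}}}/\tilde{a}^{\underline{\tilde{x}}}(\tilde{g})$, but then one has to evaluate $\tilde{a}^{\underline{\tilde{x}}}(\tilde{g})$, which leads back to essentially the same combinatorics.
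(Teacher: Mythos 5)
Your approach is correct, but it is a genuinely different route from the paper's. The paper proves this lemma by backward induction on $\ell(\tilde{g})$, starting from $\tilde{g}=\tilde{x}$ (where the identity $a^{\underline{\tilde{x}}}(\tilde{x})=1/\pi_{\underline{x}}$ plus Theorem~\ref{thm:image of 1} gives the ratio $1$) and $\tilde{g}=\tilde{s}\tilde{x}$ (a direct calculation involving $\pi_{\underline{y}}$), then using the incremental root-set Lemmas~\ref{lem:inductive formula on X}, \ref{lem:inductive formula on X, easy case} and \ref{lem:inductive formula on a,k,X, not changed} to track how the quotient changes when $\tilde{g}$ is replaced by $\tilde{g}\tilde{u}$ — in particular, it extracts the pair $\tilde{\beta},\tilde{\gamma}$ and computes the scalar $\beta/\gamma$ via the root formulas already recorded in the proof of Theorem~\ref{thm:image of 1}. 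You instead write down the whole set $\widetilde{X}_{\tilde{g}}^{\tilde{x}}$ in closed form (as an initial segment of $\pi_{\underline{x}}$-factors together with an initial segment of $\pi_{\underline{y}}$-factors, of combined length $m_{s,t}$), then use the single identity $[k]_Z[m_{s,t}-1]_{\sigma^{k-1}(Z)}=[m_{s,t}-k]_Z$ to convert the $\pi_{\underline{y}}$-segment into the complementary final segment of $\pi_{\underline{x}}$ up to $[m_{s,t}-1]$-factors, which cancels the denominator globally. I checked the bookkeeping: your bounds $a=\lfloor(m_{s,t}-\ell(g)+1)/2\rfloor$, $b=\lfloor(m_{s,t}+\ell(g))/2\rfloor$ for $\tilde{s}\tilde{g}>\tilde{g}$ (and their analogues for $\tilde{s}\tilde{g}<\tilde{g}$) are correct, $a+b=m_{s,t}$, and the residual product of $[m_{s,t}-1]$'s reduces to the stated form using $[m_{s,t}-1]_X[m_{s,t}-1]_Y=1$, $[m_{s,t}-1]_X^2=1$ for $m_{s,t}$ even, and $\prod_{i=1}^{m_{s,t}-1}[m_{s,t}-1]_{\sigma^{i-1}(X)}=\xi$. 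What the paper's inductive route buys is that it reuses the machinery already built for Theorem~\ref{thm:image of 1} and never needs a global description of $\widetilde{X}_{\tilde{g}}^{\tilde{x}}$; what your route buys is an explicit, self-contained list of the roots, which makes the cancellation transparent in one step. Both work; your version requires the careful casework at the $\ell(s_{\alpha}g)=m_{s,t}$ boundary (governed by the leading letter of $s_\alpha g$), which you rightly flag as the main thing to get right.
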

\begin{proof}
We prove the lemma by backward induction on $\ell(\tilde{g})$.
If $\tilde{g} = \tilde{x}$, then by Theorem~\ref{thm:image of 1}, we have $a^{\underline{\tilde{x}}}(\tilde{x}) = (\prod_{\tilde{\delta}\in \widetilde{X}_{\tilde{x}}^{\tilde{x}}}\delta)^{-1}$.
On the other hand, for $e\in \{0,1\}^{m_{s,t}}$, we have $\underline{\tilde{x}}^{e} = \tilde{x}$ if and only if $e = (1,\ldots,1)$.
Hence by the definition of $a^{\underline{\tilde{x}}}(\tilde{x})$, we have $a^{\underline{\tilde{x}}}(\tilde{x}) = 1/\pi_{\underline{x}}$.

Next assume that $\tilde{g} = \tilde{s}\tilde{x}$.
Define $\tilde{s}_i = \tilde{s}$ if $i$ is odd and $\tilde{s}_i = \tilde{t}$ if $i$ is even.
Then $\underline{x} = (\tilde{s}_1,\ldots,\tilde{s}_{m_{s,t}})$.
For $e\in \{0,1\}^{m_{s,t}}$, $\underline{\tilde{x}}^e = \tilde{g}$ if and only if $e = (0,1,\ldots,1)$.
Hence by the definition, $a^{\underline{\tilde{x}}}(\tilde{g}) = 1/\alpha_{s_1} \prod_{i = 2}^{m_{s,t}}s_2\cdots s_{i - 1}(\alpha_{s_i})$.
Since $\underline{\tilde{y}} = (\tilde{s}_2,\tilde{s}_3,\ldots,\tilde{s}_{m_{s,t} + 1})$, we have $\pi_{\underline{y}} = \prod_{i = 2}^{m_{s,t} + 1}s_{2}\cdots s_{i - 1}(\alpha_{s_{i}}) = (1/a^{\underline{\tilde{x}}}(\tilde{g}))(s_2s_3\cdots s_{m_{s,t}}(\alpha_{s_{m_{s,t} + 1}})/\alpha_{s_1})$.
Since $\tilde{s}_1 = \tilde{s}$, $\tilde{s}\tilde{s}_{s_2\tilde{s}_3\cdots \tilde{s}_{m_{s,t}}(\tilde{\alpha}_{\tilde{s}_{m_{s,t} + 1}})} > \tilde{s}_{s_2\tilde{s}_3\cdots \tilde{s}_{m_{s,t}}(\tilde{\alpha}_{\tilde{s}_{m_{s,t} + 1}})}$.
Hence we have $s_2s_3\cdots s_{m_{s,t}}(\alpha_{s_{m_{s,t} + 1}}) = [m_{s,t} - 1]_X\alpha_{s} + [m_{s,t}]_Y\alpha_{t} = [m_{s,t} - 1]_X\alpha_{s}$ by Lemma~\ref{lem:formula of root}.
Since $s_1 = s$, we get $\pi_{\underline{y}} = [m_{s,t} - 1]_X/a^{\underline{\tilde{x}}}(\tilde{g})$.
By $\pi_{\underline{y}} = \xi[m_{s,t} - 1]_X\pi_{\underline{x}}$, we have $\xi\pi_{\underline{x}}a^{\underline{\tilde{x}}}(\tilde{g}) = 1$.
By Theorem~\ref{thm:image of 1}, the left hand side of the lemma is $(a^{\underline{\tilde{x}}}(\tilde{g})\pi_{\underline{x}})^{-1}$.
Hence we get the lemma in this case.

Assume that $\tilde{g}\ne \tilde{x},\tilde{s}\tilde{x}$.
Then there exists $\tilde{u}\in \widetilde{S}$ such that $\tilde{x}\ge \tilde{g}\tilde{u} > \tilde{g}$.
When $\tilde{g} = 1$, we take $\tilde{u} = \tilde{t}$.
\begin{itemize}
\item First assume that $\tilde{x}\tilde{u} < \tilde{x}$.
By Lemma~\ref{lem:inductive formula on a,k,X, not changed}, the left hand side is not changed if we replace $\tilde{g}$ with $\tilde{g}\tilde{u}$.
We prove that the right hand side is also not changed.
Then this gives the lemma by inductive hypothesis.
\begin{itemize}
\item Assume $\tilde{g}\ne 1$.
The reduced expression of $\tilde{x}$ is given as $\tilde{x} = \tilde{s}\cdots \tilde{u}$.
Let $\tilde{u}'\in \widetilde{S}$ be the element which is not $\tilde{u}$.
If $\tilde{s}\tilde{g} > \tilde{g}$, then the reduced expression of $\tilde{g}$ is $\tilde{g} = \tilde{t}\cdots \tilde{u}'$.
Hence $\ell(\tilde{g})\equiv \ell(\tilde{x})\pmod{2}$.
If $\tilde{s}\tilde{g} < \tilde{g}$, then the reduced expression of $\tilde{g}$ is $\tilde{g} = \tilde{s}\cdots \tilde{u}'$.
Hence $\ell(\tilde{g})\equiv \ell(\tilde{x}) + 1\pmod{2}$.
Therefore the right hand side is not changed.

\item If $\tilde{g} = 1$, then by $\tilde{x}\tilde{t} < \tilde{x}$ (recall that we took $\tilde{u} = \tilde{t}$), the reduced expression of $\tilde{x}$ is $\tilde{x} = \tilde{s}\cdots \tilde{t}$.
Hence $\ell(\tilde{x})$ is even and the right hand side is not changed.
\end{itemize}
\item Assume that $\tilde{x}\tilde{u} >\tilde{x}$.
Take $\tilde{\beta}$ and $\tilde{\gamma}$ such that $\tilde{s}_{\tilde{\beta}}\in \{\tilde{x}\tilde{g}^{-1},\tilde{s}\tilde{x}\tilde{g}^{-1}\}$ and $\tilde{s}_{\tilde{\gamma}}\in \{\tilde{x}\tilde{u}\tilde{g}^{-1},\tilde{s}\tilde{x}\tilde{u}\tilde{g}^{-1}\}$.
By Lemma~\ref{lem:inductive formula on X}, we have $(\prod_{\tilde{\delta}\in \widetilde{X}_{\tilde{g}}^{\tilde{x}}}\delta)/(\prod_{\tilde{\delta}\in \widetilde{X}_{\tilde{g}\tilde{u}}^{\tilde{x}}}\delta) = \beta/\gamma$.
We calculate $\beta/\gamma$.
We use calculations in the proof of Theorem~\ref{thm:image of 1}.
\begin{itemize}
\item 
If $\tilde{s}\tilde{g} > \tilde{g}$, $\tilde{g}\ne 1$ or $\tilde{g} = 1$, then by the proof of Theorem~\ref{thm:image of 1}, we have $\beta = [(m_{s,t} - \ell(\tilde{g}) + 1)/2]_X\alpha_s + [(m_{s,t} - \ell(\tilde{g}) - 1)/2]_Y\alpha_t$ and $\gamma = [(m_{s,t} + \ell(\tilde{g}) - 1)/2]_X\alpha_s + [(m_{s,t} + \ell(\tilde{g}) + 1)/2]_Y\alpha_t$.
Therefore by the previous lemma, we have $\gamma = [m_{s,t} - 1]_{\sigma^{(m_{s,t} - \ell(\tilde{g}) - 1)/2}(X)}\beta$.
We have $[m_{s,t} - 1]_X[m_{s,t} - 1]_Y = 1$ by \cite[(6.11),(6.12)]{arXiv:2011.05432}.
Hence we have $\beta/\gamma = [m_{s,t} - 1]_{\sigma^{(m_{s,t} - \ell(\tilde{g}) - 1)/2 - 1}(X)}$.
By inductive hypothesis, we get the lemma in this case.
\item Finally assume that $\tilde{s}\tilde{g}< \tilde{g}$.
By the proof of Theorem~\ref{thm:image of 1}, we have $\beta = [(m_{s,t} - \ell(\tilde{g}))/2 - 1]_X\alpha_s + [(m_{s,t} - \ell(\tilde{g}))/2]_Y\alpha_t$, $\gamma = [(m_{s,t} + \ell(\tilde{g}))/2 + 1]_X\alpha_s + [(m_{s,t} + \ell(\tilde{g}))/2]_Y\alpha_t$.
Hence $\gamma = [m_{s,t} - 1]_{\sigma^{(m_{s,t} - \ell(g))/2 - 2}(X)}\beta$.
Therefore $\beta = [m_{s,t} - 1]_{\sigma^{(m_{s,t} - \ell(g))/2 - 1}(X)}\gamma$ and we get the lemma.\qedhere
\end{itemize}
\end{itemize}
\end{proof}

\begin{lem}\label{lem:a in R}
Let $\underline{\tilde{w}}\in \widetilde{S}^l$.
If $0\le l \le m_{s,t}$, then $\pi_{\underline{x}}a^{\underline{\tilde{w}}}(1)\in R$.
\end{lem}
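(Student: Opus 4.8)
\smallskip\noindent\textbf{Proof plan.}
The plan is to evaluate $a^{\underline{\tilde{w}}}(1)$ by Theorem~\ref{thm:image of 1} and then reduce to a divisibility statement in $R$, the key arithmetic input being that $[m_{s,t}-1]_X$ is a unit of $\Coef$ (a consequence of the realization hypothesis $[m_{s,t}]_X=[m_{s,t}]_Y=0$). First I would reduce to the case that $\underline{\tilde{w}}$ is a reduced expression: in the universal rank-two Coxeter system this fails exactly when $\tilde{s}_i=\tilde{s}_{i-1}$ for some $i$, and then $\tilde{a}^{\underline{\tilde{w}}}(1)=0$, so $a^{\underline{\tilde{w}}}(1)=0\in R$. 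Assume henceforth that $\underline{\tilde{w}}$ is reduced, so that $\ell(\tilde{w})=l\le m:=m_{s,t}$. Applying Theorem~\ref{thm:image of 1} in the universal Coxeter system and passing to images in $Q$ gives
\[
a^{\underline{\tilde{w}}}(1)=\frac{k_1^{\underline{\tilde{w}}}}{\prod_{\tilde{\delta}\in\widetilde{X}_1^{\tilde{w}}}\delta},
\]
where $k_1^{\underline{\tilde{w}}}\in\Coef$, since by the definition of $k_g^{\underline{w}}$ in Section~\ref{sec:A calculation in the universal Coxeter system of rank two} this is the image of a two-colored quantum binomial coefficient. Hence it suffices to prove that $\prod_{\tilde{\delta}\in\widetilde{X}_1^{\tilde{w}}}\delta$ divides $\pi_{\underline{x}}$ up to a factor in $\Coef^{\times}$.

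For this, recall that in the universal rank-two Coxeter system $\tilde{u}\le\tilde{v}$ if and only if $\ell(\tilde{u})<\ell(\tilde{v})$ or $\tilde{u}=\tilde{v}$, and that reflections have odd length. Since $\ell(\tilde{w})=l\le m$, every $\tilde{\delta}\in\widetilde{X}_1^{\tilde{w}}$ satisfies $\tilde{s}_{\tilde{\delta}}\le\tilde{x}$, so $\widetilde{X}_1^{\tilde{w}}\subseteq\widetilde{X}_1^{\tilde{x}}$ — with the single exception $l=m$, $\tilde{w}=\tilde{y}$ (i.e.\ $\underline{\tilde{w}}=\underline{\tilde{y}}$), treated below. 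Granting the inclusion, $\prod_{\tilde{\delta}\in\widetilde{X}_1^{\tilde{w}}}\delta$ divides $\prod_{\tilde{\delta}\in\widetilde{X}_1^{\tilde{x}}}\delta$ in $R$ as a sub-product. By Lemma~\ref{lem:calculation of pi_x/x_g^w} applied with $\tilde{g}=1$ (so that $\tilde{s}\tilde{g}>\tilde{g}$) one has
\[
\prod_{\tilde{\delta}\in\widetilde{X}_1^{\tilde{x}}}\delta=\xi\Bigl(\prod_{i=1}^{\floor{(m-1)/2}}[m-1]_{\sigma^{i-1}(X)}\Bigr)\pi_{\underline{x}},
\]
and the scalar on the right is a unit of $\Coef$: each $[m-1]_{\sigma^{i-1}(X)}$ is a unit because $[m-1]_X[m-1]_Y=1$, and $\xi\in\{1,[m-1]_X\}$ is a unit as well. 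Thus $\pi_{\underline{x}}=c^{-1}\prod_{\tilde{\delta}\in\widetilde{X}_1^{\tilde{x}}}\delta$ for some $c\in\Coef^{\times}$, and therefore
\[
\pi_{\underline{x}}\,a^{\underline{\tilde{w}}}(1)=c^{-1}\Bigl(\prod_{\tilde{\delta}\in\widetilde{X}_1^{\tilde{x}}\setminus\widetilde{X}_1^{\tilde{w}}}\delta\Bigr)k_1^{\underline{\tilde{w}}}\in R.
\]

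The exceptional case $\underline{\tilde{w}}=\underline{\tilde{y}}$ (so $l=m$ and $\widetilde{X}_1^{\tilde{w}}=\widetilde{X}_1^{\tilde{y}}$) is handled by the same argument after interchanging the roles of $(\tilde{s},\alpha_s,X)$ and $(\tilde{t},\alpha_t,Y)$: by the evident symmetry of the set-up, Lemma~\ref{lem:calculation of pi_x/x_g^w} then gives $\prod_{\tilde{\delta}\in\widetilde{X}_1^{\tilde{y}}}\delta\in\Coef^{\times}\pi_{\underline{y}}$, and $\pi_{\underline{y}}\in\Coef^{\times}\pi_{\underline{x}}$ finishes the case. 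I expect the only step that is not pure bookkeeping to be the observation that the scalar produced by Lemma~\ref{lem:calculation of pi_x/x_g^w} is a unit of $\Coef$ — that is, that $[m_{s,t}-1]_X$ is invertible — so that it can simply be absorbed; everything else (the Bruhat-order inclusion and the $\underline{\tilde{w}}=\underline{\tilde{y}}$ boundary case) is routine. In particular, Assumption~\ref{assump:Assumption in Introduction} is not used for this lemma.
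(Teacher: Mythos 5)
Your argument is correct and follows essentially the same route as the paper's own proof: apply Theorem~\ref{thm:image of 1} to reduce to showing $\pi_{\underline{x}}/\prod_{\tilde{\gamma}\in \widetilde{X}_{1}^{\tilde{w}}}\gamma\in R$, settle $\tilde{w}=\tilde{x}$ by Lemma~\ref{lem:calculation of pi_x/x_g^w} (whose scalar is a unit since $[m_{s,t}-1]_X[m_{s,t}-1]_Y=1$), handle $\tilde{w}=\tilde{y}$ by the $s\leftrightarrow t$ swap together with $\pi_{\underline{y}}\in\Coef^\times\pi_{\underline{x}}$, and reduce the remaining cases via $\widetilde{X}_1^{\tilde{w}}\subseteq\widetilde{X}_1^{\tilde{x}}$. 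Your extra remarks (reducing first to reduced $\underline{\tilde{w}}$, and noting that Assumption~\ref{assump:Assumption in Introduction} is not needed) are correct but are just elaborations of what is implicit in the paper's proof.
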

\begin{proof}
By Theorem~\ref{thm:image of 1}, the lemma follows from $\pi_{\underline{x}}/\prod_{\tilde{\gamma}\in \widetilde{X}_{1}^{\tilde{w}}}\gamma\in R$.
If $\tilde{w} = \tilde{x}$, then it follows from Lemma~\ref{lem:calculation of pi_x/x_g^w}.
By swapping $s$ with $t$, $\pi_{\underline{y}}a^{\underline{\tilde{y}}}(1)\in R$.
Since $\pi_{\underline{y}}\in \Coef^\times \pi_{\underline{x}}$, we get the lemma for $\tilde{w} = \tilde{y}$.
In general, we have $\tilde{w}\le \tilde{x}$ or $\tilde{w}\le \tilde{y}$.
If $\tilde{w}\le \tilde{x}$ then $X_1^{\tilde{w}}\subset X_{1}^{\tilde{x}}$.
Hence $\pi_{\underline{x}}/\prod_{\tilde{\gamma}\in \widetilde{X}_{1}^{\tilde{w}}}\gamma = (\pi_{\underline{x}}/\prod_{\tilde{\gamma}\in \widetilde{X}_{1}^{\tilde{x}}}\gamma)(\prod_{\tilde{\gamma}\in \widetilde{X}_{1}^{\tilde{x}}\setminus \widetilde{X}_{1}^{\tilde{w}}}\gamma)\in R$.
The same discussion implies the lemma when $\tilde{w}\le \tilde{y}$.
\end{proof}

\subsection{An assumption}
To prove the maim theorem, we need one more assumption.
In this subsection, we discuss on the assumption.
We start with the following proposition.

\begin{prop}\label{prop:the assumptions}
The following are equivalent.
\begin{enumerate}
\item $\sqbinom{m_{s,t}}{k}_Z = 0$ for any $1\le k\le m_{s,t} - 1$ and $Z\in \{X,Y\}$.
\item We have $\sqbinom{m_{s,t} - 1}{k}_{Z} = \prod_{i = 1}^{k}[m_{s,t} - 1]_{\sigma^{i - 1}(Z)}$ for $0\le k\le m_{s,t} - 1$ and $Z\in \{X,Y\}$.
\item The realization is even-balanced and $\sqbinom{m_{s,t} - 1}{k}_{Z} = \prod_{i = 1}^{k}[m_{s,t} - 1]_{\sigma^{i - 1}(Z)}$ for $0\le k\le (m_{s,t} - 1)/2$ and $Z\in \{X,Y\}$.
\end{enumerate}
\end{prop}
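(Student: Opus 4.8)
The plan is to prove the two equivalences $(1)\Leftrightarrow(2)$ and $(2)\Leftrightarrow(3)$, with $(2)$ as the hub. Write $m=m_{s,t}$. Since $[m]_X=[m]_Y=0$ in $\Coef$ (part of the realization axioms of \ref{subsec:Finite Coxeter group of rank two and a realization}), we get $[m+1]_Z=[2]_Z[m]_{\sigma(Z)}-[m-1]_Z=-[m-1]_Z$. I will also use $\sigma^2=\id$, the identity $[k]_Z[m-1]_{\sigma^{k-1}(Z)}=[m-k]_Z$ proved earlier in this section, and the resulting $[m-1]_X[m-1]_Y=1$ (the case $k=m-1$, together with Lemma~\ref{lem:quantum num, even, odd}). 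Abbreviate $P_k(Z)=\prod_{i=1}^{k}[m-1]_{\sigma^{i-1}(Z)}$; since $[m-1]_{\sigma^j(Z)}$ depends only on the parity of $j$ and $[m-1]_Z[m-1]_{\sigma(Z)}=1$, we have $P_k(Z)=1$ for $k$ even, $P_k(Z)=[m-1]_Z$ for $k$ odd, and $P_k(Z)=P_{k-1}(Z)[m-1]_{\sigma^{k-1}(Z)}$. Finally, $\sqbinom{m-1}{k}_Z=\sqbinom{m-1}{m-1-k}_Z$ for $0\le k\le m-1$ is the usual symmetry of two-colored quantum binomial coefficients, immediate from the definition after multiplying numerator and denominator by $[m-1-k]_Z[m-2-k]_Z\cdots[1]_Z$.

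For $(1)\Rightarrow(2)$, apply Lemma~\ref{lem:an inductive formula on binomial}(1) with top index $m-1$ and $n=k$; using $[m+1]_{\sigma^{k+1}(Z)}=-[m-1]_{\sigma^{k+1}(Z)}=-[m-1]_{\sigma^{k-1}(Z)}$ this becomes, for $1\le k\le m-1$,
\[
\sqbinom{m-1}{k}_Z-\sqbinom{m-1}{k-1}_Z[m-1]_{\sigma^{k-1}(Z)}=\sqbinom{m}{k}_{\sigma^k(Z)}[k+1]_Z.
\]
Under $(1)$ the right-hand side vanishes for all such $k$ and both colours; since $\sqbinom{m-1}{0}_Z=1=P_0(Z)$, induction on $k$ via $P_k(Z)=P_{k-1}(Z)[m-1]_{\sigma^{k-1}(Z)}$ gives $\sqbinom{m-1}{k}_Z=P_k(Z)$ for $0\le k\le m-1$, which is $(2)$. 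For $(2)\Rightarrow(1)$, apply Lemma~\ref{lem:an inductive formula on binomial}(2) with top index $m-1$ and $n=k$:
\[
\sqbinom{m}{k}_Z=\sqbinom{m-1}{k}_{\sigma^k(Z)}[k+1]_Z-\sqbinom{m-1}{k-1}_Z[m-1-k]_{\sigma^{k+1}(Z)}.
\]
Substituting $(2)$, and using $[k+1]_{\sigma^{k+1}(Z)}=[k+1]_Z$ (Lemma~\ref{lem:quantum num, even, odd}) together with $[k+1]_Z[m-1]_{\sigma^k(Z)}=[m-k-1]_Z$, so that $[m-1-k]_{\sigma^{k+1}(Z)}=[k+1]_Z[m-1]_{\sigma(Z)}$, the right-hand side equals $[k+1]_Z\bigl(P_k(\sigma^k(Z))-P_{k-1}(Z)[m-1]_{\sigma(Z)}\bigr)$, and the parenthesis vanishes by the parity description of the $P$-products (both terms equal $1$ if $k$ is even and $[m-1]_{\sigma(Z)}$ if $k$ is odd). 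Hence $\sqbinom{m}{k}_Z=0$ for $1\le k\le m-1$ and both colours, which is $(1)$.

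For $(2)\Leftrightarrow(3)$: that $(2)$ yields the range $0\le k\le (m-1)/2$ of the binomial identity in $(3)$ is trivial, and $(2)$ yields even-balancedness, for if $m$ is odd this is automatic while if $m$ is even then $1=\sqbinom{m-1}{m-1}_Z=P_{m-1}(Z)=[m-1]_Z$ (as $m-1$ is odd), i.e.\ $\xi=[m-1]_X=1$. Conversely, assume $(3)$. By the symmetry $\sqbinom{m-1}{k}_Z=\sqbinom{m-1}{m-1-k}_Z$, the assertion ``$\sqbinom{m-1}{k}_Z=P_k(Z)$'' at index $k$ is equivalent to the one at index $m-1-k$ provided $P_k(Z)=P_{m-1-k}(Z)$; when $m$ is odd, $k$ and $m-1-k$ have the same parity so this holds, and when $m$ is even, even-balancedness forces $[m-1]_Z=1$, whence $P_j(Z)=1$ for all $j$. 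Running $k$ over $0,\dots,\lfloor (m-1)/2\rfloor$ and reflecting therefore covers all of $0,\dots,m-1$, so $(3)$ yields $(2)$, completing the cycle.

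The only genuinely calculational part is the parity bookkeeping for the $P$-products and the two small case splits ($k$ even/odd, $m$ even/odd). The point to get right is tracking which power of $\sigma$ occurs in each factor $[m-1]_{\sigma^j(Z)}$ after the substitutions; the three inputs $[m]_Z=0$, $[m-1]_X[m-1]_Y=1$, and $[k]_Z[m-1]_{\sigma^{k-1}(Z)}=[m-k]_Z$ are exactly what makes the telescoping close. Note that no division by a possibly-vanishing quantum number is ever performed, which is why the argument works over an arbitrary commutative integral domain $\Coef$, and why even-balancedness is needed only in $(3)$ (it is otherwise a consequence of $(1)$ or $(2)$).
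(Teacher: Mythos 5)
Your proof is correct and follows essentially the same route as the paper: Lemma~\ref{lem:an inductive formula on binomial}(1) with $[m+1]_Z=-[m-1]_Z$ for $(1)\Rightarrow(2)$, Lemma~\ref{lem:an inductive formula on binomial}(2) plus the identity $[k]_Z[m-1]_{\sigma^{k-1}(Z)}=[m-k]_Z$ for $(2)\Rightarrow(1)$, and the symmetry $\sqbinom{m-1}{k}_Z=\sqbinom{m-1}{m-1-k}_Z$ together with $[m-1]_X[m-1]_Y=1$ for $(2)\Leftrightarrow(3)$. The only difference is presentational: your $P_k$ parity bookkeeping ($P_k=1$ for $k$ even, $[m-1]_Z$ for $k$ odd) replaces the paper's explicit reindexing/telescoping manipulations, which streamlines the last two steps without changing the underlying argument.
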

\begin{proof}
Assume (1).
By Lemma~\ref{lem:an inductive formula on binomial} and (1), we have $\sqbinom{m_{s,t} - 1}{k}_Z = -\sqbinom{m_{s,t} - 1}{k - 1}_Z[m_{s,t} + 1]_{\sigma^{k - 1}(Z)}$.
We have $[m_{s,t} + 1]_{\sigma^{k - 1}(Z)} =  - [m_{s,t} - 1]_{\sigma^{k - 1}(Z)}$~\cite[(6.9)]{arXiv:2011.05432}.
Hence (2) follows from induction on $k$.

Conversely assume (2) and we prove (1).
By Lemma~\ref{lem:an inductive formula on binomial}, we have
\begin{align*}
\sqbinom{m_{s,t}}{k}_Z & =
\sqbinom{m_{s,t} - 1}{k}_{\sigma^k(Z)}[k + 1]_Z - \sqbinom{m_{s,t} - 1}{k - 1}_Z[m_{s,t} -k - 1]_{\sigma^{k + 1}(Z)}\\
& = \prod_{i = 1}^k[m_{s,t} - 1]_{\sigma^{k + i - 1}(Z)}[k + 1]_Z - \prod_{i = 1}^{k - 1}[m_{s,t} - 1]_{\sigma^{i - 1}(Z)}[m - k - 1]_{\sigma^{k + 1}(Z)}.
\end{align*}
By replacing $i$ with $k - i$, we have $\prod_{i = 1}^k[m_{s,t} - 1]_{\sigma^{k + i - 1}(Z)} = \prod_{i = 0}^{k - 1}[m_{s,t} - 1]_{\sigma^{i - 1}(Z)} = [m_{s,t} - 1]_{\sigma(Z)}\prod_{i = 1}^{k - 1}[m_{s,t} - 1]_{\sigma^{i - 1}(Z)}$.
Therefore it is sufficient to prove $[m_{s,t} - 1]_{\sigma(Z)}[k + 1]_Z - [m - k - 1]_{\sigma^{k - 1}(Z)} = 0$.
By Lemma~\ref{lem:quantum, (n + 1)(m + 1)-nm=n+m+1}, we have $[m_{s,t} - 1]_{\sigma(Z)}[k + 1]_{Z} = [m_{s,t}]_{Z}[k + 2]_{\sigma(Z)} - [m_{s,t} + k + 1]_{\sigma^{k + 1}(Z)}$.
Since $[m_{s,t}]_Z = 0$ and $[m_{s,t} + k + 1]_{\sigma^{k + 1}(Z)} = -[m_{s,t} - k - 1]_{\sigma^{k + 1}(Z)}$~\cite[(6.9)]{arXiv:2011.05432}, we get (1).

We assume (2) and we prove (3).
By putting $k = m_{s,t} - 1$, we have $\prod_{i = 1}^{m_{s,t} - 1}[m_{s,t} - 1]_{\sigma^{i - 1}(Z)} = 1$.
If $m_{s,t}$ is even, by Lemma~\ref{lem:quantum num, even, odd} (\ref{lem:enum:quantum num, odd}) and $[m_{s,t} - 1]_Z^2 = 1$ \cite[(6.12)]{arXiv:2011.05432}, we get $[m_{s,t} - 1]_Z = 1$.
Hence $V$ is even-balanced and we get (3).

Assume (3) and we prove (2).
It is sufficient to prove that $\prod_{i = 1}^{k}[m_{s,t} - 1]_{\sigma^{i - 1}(Z)} = \prod_{i = 1}^{m_{s,t} - 1 - k}[m_{s,t} - 1]_{\sigma^{i - 1}(Z)}$.
By \cite[(6.11)]{arXiv:2011.05432}, since the realization is even-balanced, we have $\prod_{i = 1}^{m_{s,t} - 1}[m_{s,t} - 1]_{\sigma^{i - 1}(Z)} = 1$.
Hence the right hand side is $\prod_{i = m_{s,t} - k}^{m_{s,t} - 1}[m_{s,t} - 1]_{\sigma^{i - 1}(Z)}^{-1} =  \prod_{i = 1}^k[m_{s,t} - 1]_{\sigma^{m_{s,t} - i}(Z)}$.
Here in the last part we replaced $i$ with $m_{s,t} - i$ and used $[m_{s,t} - 1]_X[m_{s,t} - 1]_Y = 1$ \cite[(6.11), (6.12)]{arXiv:2011.05432}.
By Lemma~\ref{lem:quantum num, even, odd} (\ref{lem:enum:quantum num, shift}), we have $[m_{s,t} - 1]_{\sigma^{m_{s,t} - i}(Z)} = [m_{s,t} - 1]_{\sigma^{i - 1}(Z)}$ and we get (2).
\end{proof}

We need the following assumption to prove the main theorem.

\begin{assump}\label{assump:Assumption}
The equivalent conditions in Proposition~\ref{prop:the assumptions} hold.
\end{assump}

We have a sufficient condition of Assumption~\ref{assump:Assumption}.

\begin{prop}
If the action of $W$ on $\Coef \alpha_s + \Coef \alpha_t$ is faithful, then Assumption~\ref{assump:Assumption} holds.
\end{prop}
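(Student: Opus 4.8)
The plan is to reduce to a field, rephrase faithfulness as an order condition on $st$, and then run a short case analysis on the eigenvalues of $st$ on $U:=\Coef\alpha_s+\Coef\alpha_t$. Since each $\sqbinom{m_{s,t}}{k}_Z$ lies in $\Z[X,Y]$ and we only need its image in $\Coef$ to be zero, we may replace $\Coef$ by $\Frac(\Coef)$ and assume $\Coef$ is a field; faithfulness on $U$ survives, because $U$ embeds in $U\otimes_\Coef\Frac(\Coef)$. Put $m=m_{s,t}$; for $m=2$ the axiom $[2]_Z=Z=0$ already gives $\sqbinom21_Z=Z=0$, so assume $m\ge3$. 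If $\alpha_s,\alpha_t$ were linearly dependent, then $s$ and $t$ would both act on the line $U$ by $v\mapsto-v$, so $st$ would act trivially, contradicting faithfulness; hence $\{\alpha_s,\alpha_t\}$ is a basis of $U$, in which $s$ and $t$ act by $\left(\begin{smallmatrix}-1&X\\0&1\end{smallmatrix}\right)$ and $\left(\begin{smallmatrix}1&0\\Y&-1\end{smallmatrix}\right)$. Since $\ker(W\to GL(U))$ is normal in the dihedral group $W=\langle s,t\rangle$ and a nontrivial normal subgroup of a dihedral group of order $\ge6$ meets the rotation subgroup $\langle st\rangle$ nontrivially, faithfulness is equivalent to $st$ having order exactly $m$ on $U$.

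From the definition one has the identity $\sqbinom mk_Z\cdot[k]_Z\cdots[1]_Z=[m]_Z\cdots[m-k+1]_Z$ in $\Z[X,Y]$; after specialization the right-hand side vanishes, as it contains the factor $[m]_Z=0$, so $\sqbinom mk_Z=0$ in $\Coef$ provided $[i]_Z\ne0$ for $1\le i\le m-1$. It thus suffices to prove this non-vanishing, and for that I would use that the matrix of $(st)^k$ in the basis $(\alpha_s,\alpha_t)$ has entries $[2k+1]_X,\,-[2k]_X,\,[2k]_Y,\,-[2k-1]_Y$ (an immediate induction from $[n+1]_Z=[2]_Z[n]_{\sigma(Z)}-[n-1]_Z$), while the characteristic polynomial of $st$ is $\lambda^2-(XY-2)\lambda+1$, of discriminant $XY(XY-4)$.

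If $XY\notin\{0,4\}$, then $st$ is diagonalizable over $\overline\Coef$ with distinct eigenvalues $\eta^{\pm1}$, and order $m$ forces $\eta$ to be a primitive $m$-th root of unity; writing $[k]_\eta=(\eta^k-\eta^{-k})/(\eta-\eta^{-1})$ and comparing the matrix above with $(st)^k=[k]_\eta\,st-[k-1]_\eta\,\Id$ gives $[2k]_X=X[k]_\eta$, $[2k]_Y=Y[k]_\eta$ and $[2k-1]_X=[2k-1]_Y=[k]_\eta+[k-1]_\eta$, none of which vanishes for indices in $\{1,\dots,m-1\}$ (since $X,Y\ne0$ and $\eta$ is a primitive $m$-th root of unity with $\eta\ne-1$), so the previous paragraph finishes this case. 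If $XY=4$, then $st$ is a nontrivial unipotent, hence of order $\operatorname{char}\Coef$, so $m=\operatorname{char}\Coef$ is prime; here $[2k]_X=kX$ and $[2k-1]_X=2k-1$ (and symmetrically for $Y$), again nonzero for indices in $\{1,\dots,m-1\}$, and we are done. If $XY=0$, the $s\leftrightarrow t$, $X\leftrightarrow Y$ symmetry lets us take $X=0$ (and $Y\ne0$); then $st=-(\Id+N)$ with $N\ne0$, which forces $\operatorname{char}\Coef=p$ odd and $m=2p$, and here $[n]_Y\ne0$ for $1\le n\le m-1$, so $\sqbinom mk_Y=0$ and only the colour $X$ is left.

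The only genuinely delicate point is $\sqbinom{2p}k_X=0$ when $X=0$ and $m=2p$, because then $[i]_X=0$ for every even $i<m$ and the argument above breaks down; I would handle it with the polynomials directly. Writing $[i]_X=X\ell_i$ with $\ell_i\in\Z[X,Y]$ for $i$ even (Lemma~\ref{lem:quantum num, even, odd}\,(\ref{lem:enum:quantum num, even})), one compares the numbers of even indices in the ranges $\{2p-k+1,\dots,2p\}$ and $\{1,\dots,k\}$ and cancels powers of $X$ in the product formula for $\sqbinom{2p}k_X$: if $k$ is odd a surplus factor $X$ remains, so the value at $X=0$ is $0$; if $k$ is even the powers of $X$ cancel exactly, but the reduced numerator still contains the factor $\ell_{2p}$, whose value at $X=0$ equals $\pm p$ and so $0$ in $\Coef$, while the reduced denominator evaluates at $X=0$ to a product of integers $<p$ and signs $\pm1$, hence is nonzero; since $\Coef$ is a field this gives $\sqbinom{2p}k_X=0$. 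Tracking this $X$-divisibility and the emergence of $p$ in the degenerate small-characteristic case is the crux; everything else is routine bookkeeping with the two-coloured quantum-number recursions.
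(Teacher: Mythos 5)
Your proposal is correct, but it takes a substantially different and longer route than the paper's proof. The paper's argument is short: it invokes a result of Elias \cite[before Claim~3.2, Claim~3.5]{MR3462556} saying that $[k]_X=[k]_Y=0$ forces $(st)^k=\Id$ on $\Coef\alpha_s+\Coef\alpha_t$; faithfulness therefore gives that at least one of $[k]_X,[k]_Y$ is nonzero for each $1\le k\le m_{s,t}-1$, and then the integral-domain cancellation $[k]_Z\sqbinom{m_{s,t}}{k}_Z=[m_{s,t}]_Z\sqbinom{m_{s,t}-1}{k-1}_Z=0$ finishes the ``generic'' case. In the degenerate case where some $[k]_X=0$, the paper shows $X=0$, deduces $m_{s,t}$ even, gets $\sqbinom{m_{s,t}}{l}_Y=0$ for all $l$, and disposes of $\sqbinom{m_{s,t}}{l}_X$ for even $l$ in one line via the parity identity $\sqbinom{m_{s,t}}{l}_X=\sqbinom{m_{s,t}}{l}_Y$ coming from Lemma~\ref{lem:quantum num, even, odd} (odd $l$ being immediate since $[l]_X=\pm1$). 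You instead re-derive the Elias fact from scratch by a Jordan-form/eigenvalue analysis of $st$ over $\Frac(\Coef)$ (splitting on the discriminant $XY(XY-4)$, pinning down the order via $\eta$ a primitive $m_{s,t}$-th root of unity or unipotency), and in the $X=0$ case you pin down $m_{s,t}=2p$ with $p=\operatorname{char}\Coef$ and argue by tracking the $X$-divisibility and the appearance of $\ell_{2p}\equiv\pm p\equiv 0$ in the reduced numerator. That hands-on computation is correct, but it is noticeably heavier than the paper's parity observation; the paper also works directly over the domain $\Coef$ without passing to the fraction field. What your approach buys is self-containedness --- it avoids the external citation entirely and gives finer structural information (e.g.\ that only $XY\in\{0,4\}$ can be degenerate, and $m_{s,t}=2p$ when $X=0$) --- at the cost of considerably more case analysis. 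Both are valid proofs.
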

\begin{proof}
If $[k]_X = [k]_Y = 0$ for $1\le k\le m_{s,t} - 1$, then by \cite[before Claim 3.2, Claim 3.5]{MR3462556}, $(st)^k$ is the identity on $\Coef \alpha_s + \Coef \alpha_t$.
This is a contradiction.
Hence $[k]_X \ne 0$ or $[k]_Y\ne 0$ for any $1\le k\le m_{s,t} - 1$.
For $1\le k\le m_{s,t} - 1$, we have $[k]_X\sqbinom{m_{s,t}}{k}_X = [m_{s,t}]_X\sqbinom{m_{s,t} - 1}{k - 1}_X = 0$.
Hence if $[k]_{X}\ne 0$, then $\sqbinom{m_{s,t}}{k}_X = 0$.
Therefore if $[k]_X,[k]_Y\ne 0$ for any $1\le k\le m_{s,t} - 1$, we get the proposition.
Assume that there exists $k = 1,\ldots,m_{s,t} - 1$ such that $[k]_X = 0$.
Then $[k]_Y\ne 0$.
By Lemma~\ref{lem:quantum num, even, odd} (\ref{lem:enum:quantum num, odd}), $k$ is even and by Lemma~\ref{lem:quantum num, even, odd} (\ref{lem:enum:quantum num, even}), we have $[k]_YX = [k]_XY = 0$.
Hence $X = 0$.
Therefore by induction we have $[2n]_X = 0$ and $[2n + 1]_X = (-1)^n$ for any $n\in\Z_{\ge 0}$.
Hence $[2n + 1]_Y = [2n + 1]_X = (-1)^n\ne 0$ for any $n\in\Z_{\ge 0}$ by Lemma~\ref{lem:quantum num, even, odd} (\ref{lem:enum:quantum num, odd}).
We also have $[2n]_Y \ne 0$ if $1\le 2n\le m_{s,t} - 1$ since $[2n]_X = 0$.
Therefore for any $1\le l\le m_{s,t} - 1$, $[l]_Y\ne 0$.
Therefore $\sqbinom{m_{s,t}}{l}_Y = 0$.

Since $[2n + 1]_X \ne 0$ for any $n\in\Z_{\ge 0}$, $m_{s,t}$ is even.
Therefore if $l$ is even, $\#(2\Z\cap \{m_{s,t},\ldots,m_{s,t} - l + 1\}) = \#(2\Z\cap \{1,\ldots,l\})$.
Hence by Lemma~\ref{lem:quantum num, even, odd}, we have $\sqbinom{m_{s,t}}{l}_X = \sqbinom{m_{s,t}}{l}_Y$ which is zero as we have proved.
On the other hand, if $l$ is odd, then $[l]_X \ne 0$.
Hence $\sqbinom{m_{s,t}}{l}_X = 0$.
\end{proof}

Maybe more useful criterion is the following.
\begin{prop}\label{prop:assumption, in root system}
If the realization comes from a root datum and $W$ is the Weyl group, then Assumption~\ref{assump:Assumption} holds.
\end{prop}
\begin{proof}
We are in one of the following situation.
\begin{itemize}
\item $m_{s,t} = 2$, $\langle \alpha_s,\alpha_t^\vee\rangle = \langle \alpha_t,\alpha_s^\vee\rangle = 0$.
\item $m_{s,t} = 3$, $\langle \alpha_s,\alpha_t^\vee\rangle = \langle \alpha_t,\alpha_s^\vee\rangle = -1$.
\item $m_{s,t} = 4$, $\langle \alpha_s,\alpha_t^\vee\rangle = -1$, $\langle \alpha_t,\alpha_s^\vee\rangle = -2$.
\item $m_{s,t} = 6$, $\langle \alpha_s,\alpha_t^\vee\rangle = -1$, $\langle \alpha_t,\alpha_s^\vee\rangle = -3$.
\end{itemize}
We can check the assumption by direct calculations.
\end{proof}

The assumption is related to the existence of Jones-Wenzl projectors.
If Assumtion~\ref{assump:Assumption} holds, then $\sqbinom{m_{s,t} - 1}{k}_Z$ is invertible by \cite[(6.11), (6.12)]{arXiv:2011.05432}.
If \cite[Conjecture~6.23]{arXiv:2011.05432} is true, the assumption implies the existence of the Jones-Wenzl projector $JW_{m_{s,t} - 1}$.

\subsection{Soergel bimodules}\label{subsec:Soergel bimodules}
For a graded $R$-bimodule $M = \bigoplus_{i\in \Z}M^i$ and $k\in \Z$, we define the grading shift $M(k)$ by $M(k)^i = M^{i + k}$.

We define a category $\mathcal{C}$ as follows.
An object of $\mathcal{C}$ is $(M,(M^x_Q)_{x\in W})$ where
\begin{itemize}
\item $M$ is a graded $R$-bimodule.
\item $M^x_Q$ is a $Q$-bimodule such that $mp = x(p)m$ for $m\in M^x_Q$ and $p\in Q$.
\item $M\otimes_{R}Q = \bigoplus_{x\in W}M^x_Q$.
\item There exist only finite $x\in W$ such that $M^x_Q\ne 0$.
\item The $R$-bimodule $M$ is flat as a right $R$-module.
\end{itemize}
A morphism $(M,(M^x_Q))\to (N,(N^x_Q))$ is an $R$-bimodule homomorphism $\varphi$ of degree zero such that $(\varphi\otimes\id_Q)(M^x_Q)\subset N^x_Q$ for any $x\in W$.
Usually we denote just $M$ for $(M,(M^x_Q))$.
For $M,N\in \mathcal{C}$, we define the tensor product $M\otimes N = (M\otimes_{R}N,((M\otimes N)_Q^x))$ by $(M\otimes N)_Q^x = \bigoplus_{yz = x}M_Q^y\otimes_{Q}M_Q^z$.

Let $\mathcal{C}_Q$ be the category consisting of objects $(P^x)_{x\in W}$ where $P^x$ is a $Q$-bimodule such that $mp = x(p)m$ for $m\in P^x$, $p\in Q$ and there exists only finite $x\in W$ such that $P^x\ne 0$.
A morphism $(P_1^x)\to (P_2^x)$ in $\mathcal{C}_Q$ is $(\phi_x)_{x\in W}$ where $\phi_x\colon P_1^x\to P_2^x$ is a $Q$-bimodule homomorphism.
Obviously $M\mapsto (M^x_Q)_{x\in W}$ is a functor $\mathcal{C}\to \mathcal{C}_Q$.
We denote this functor by $M\mapsto M_Q$.
Since $M\to M\otimes_{R}Q$ is injective, this functor is faithful.
For $P_1 = (P_1^x),P_2 = (P_2^x)\in \mathcal{C}_Q$, we define $P_1\otimes P_2 = ((P_1\otimes P_2)^x)$ by $(P_1\otimes P_2)^x = \bigoplus_{yz = x}P_1^y\otimes_{Q}P_2^z$.
We have $(M\otimes N)_Q = M_Q\otimes N_Q$.

For $x\in W$, we define $Q_x\in \mathcal{C}_Q$ by
\begin{itemize}
\item $(Q_x)^x = Q$ as a left $Q$-module and the right action of $q\in Q$ is given by $m\cdot q = x(q)m$.
\item $(Q_x)^y = 0$ if $y\ne x$.
\end{itemize}
Then any object in $\mathcal{C}_Q$ is isomorphic to a direct sum of $Q_x$'s.
We have $Q_x\otimes Q_y\simeq Q_{xy}$ via $f\otimes g\mapsto fx(g)$.

Let $u\in S$ and we put $R^u = \{f\in R\mid u(f) = f\}$, $B_u = R\otimes_{R^u}R(1)$.
Then there exists a unique decomposition $B_u\otimes_{R}Q = (B_u)_Q^e\oplus (B_u)_Q^u$ as in the definition of the category $\mathcal{C}$.
Explicitly, it is given by the following.
Take $\delta_u\in V$ such that $\langle\alpha_u^\vee,\delta_u\rangle = 1$.
Then 
\begin{align*}
(B_u)_Q^e & = (\delta_u\otimes 1 - 1\otimes u(\delta_u))Q,\\
(B_u)_Q^u & = (\delta_u\otimes 1 - 1\otimes \delta_u)Q.
\end{align*}
Therefore $B_u\in \mathcal{C}$.
We have $(B_u)_Q\simeq Q_e\oplus Q_s$ and an isomorphism is given by 
\[
f\otimes g\mapsto \left(\frac{fg}{\alpha_u},\frac{fu(g)}{\alpha_u}\right).
\]
We always use this isomorphism to identify $(B_u)_Q$ with $Q_e\oplus Q_u$.

Let $M\in \mathcal{C}$ and consider $M\otimes B_u$.
Then $(M\otimes B_u)_Q\simeq M_Q\otimes_{Q}Q_e\oplus M_Q\otimes_{Q}Q_u$.
As a left $Q$-module, this is isomorphic to $M_Q\oplus M_Q$.
The right action is given by $(m_1,m_2)p = (m_1p,m_2u(p))$ for $p\in Q$.
\begin{lem}\label{lem:criterion in M}
Let $(m_1,m_2)\in M_Q\oplus M_Q$.
Then $(m_1,m_2)\in M\otimes B_u$ if and only $m_1 \alpha_u \in M$ and $m_1 - m_2\in M$.
\end{lem}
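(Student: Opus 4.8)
The plan is to trivialize $B_u$ as a left $R$-module, so that the identification $(M\otimes B_u)_Q\simeq M_Q\oplus M_Q$ can be written out explicitly on the submodule $M\otimes B_u$ and the criterion read off directly.

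First I would record two elementary facts. Since $\langle\alpha_u^\vee,\delta_u\rangle=1$ we have $u(\delta_u)=\delta_u-\alpha_u$; and since $\partial_u(f)$ is $u$-invariant for every $f\in R$, the identity $f=\bigl(f-\partial_u(f)\delta_u\bigr)+\partial_u(f)\delta_u$ exhibits $R=R^u\oplus R^u\delta_u$ as a free $R^u$-module with basis $\{1,\delta_u\}$. Consequently $B_u=R\otimes_{R^u}R(1)$ is free as a left $R$-module with basis $\{1\otimes1,\,1\otimes\delta_u\}$ (and, symmetrically, free as a right $R$-module, so that $M\otimes B_u=M\otimes_R B_u$ is a sum of two copies of $M$ as a right $R$-module and hence embeds into its localization $(M\otimes B_u)_Q=M_Q\oplus M_Q$). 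In particular every element of $M\otimes B_u$ is \emph{uniquely} of the form $m\otimes(1\otimes1)+m'\otimes(1\otimes\delta_u)$ with $m,m'\in M$.

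Next I would compute images under the identifications. From $f\otimes g\mapsto(fg/\alpha_u,\,fu(g)/\alpha_u)$, the identification $(B_u)_Q=Q_e\oplus Q_u$, and $M_Q\otimes_Q(Q_e\oplus Q_u)=M_Q\oplus M_Q$ with right action $(m_1,m_2)p=(m_1p,\,m_2u(p))$ as above, one finds $1\otimes1\mapsto(1/\alpha_u,\,1/\alpha_u)$ and, using $u(\delta_u)=\delta_u-\alpha_u$, $1\otimes\delta_u\mapsto(\delta_u/\alpha_u,\,\delta_u/\alpha_u-1)$, hence
\[
m\otimes(1\otimes1)+m'\otimes(1\otimes\delta_u)\ \longmapsto\ \left(\frac{m+m'\delta_u}{\alpha_u},\ \frac{m+m'\delta_u}{\alpha_u}-m'\right).
\]
Therefore $(m_1,m_2)\in M_Q\oplus M_Q$ lies in $M\otimes B_u$ exactly when there are $m,m'\in M$ with $m'=m_1-m_2$ and $m+m'\delta_u=m_1\alpha_u$, i.e.\ exactly when $m_1-m_2\in M$ and $m_1\alpha_u-(m_1-m_2)\delta_u\in M$; since $\delta_u\in R$, granted $m_1-m_2\in M$ the term $(m_1-m_2)\delta_u$ is automatically in $M$, so this reduces to $m_1-m_2\in M$ and $m_1\alpha_u\in M$, which is the assertion.

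The only delicate point — and the step I would check most carefully — is tracking the left/right actions through the chain $(M\otimes B_u)_Q=M_Q\otimes_Q(B_u)_Q$, $(B_u)_Q=Q_e\oplus Q_u$, $M_Q\otimes_Q Q_u\cong M_Q$: one must make sure the $u$-twist on the second coordinate is exactly the one appearing in $(m_1,m_2)p=(m_1p,m_2u(p))$, and that the sign in $u(\delta_u)=\delta_u-\alpha_u$ enters consistently. Beyond that the argument is pure bookkeeping.
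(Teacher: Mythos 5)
Your proof is correct and follows essentially the same route as the paper's: both turn on the auxiliary element $\delta_u$ with $\langle\alpha_u^\vee,\delta_u\rangle=1$ and the explicit identification $(B_u)_Q\simeq Q_e\oplus Q_u$. The only difference is organizational — the paper treats the two implications separately (checking the two conditions on generators $m\otimes(p_1\otimes p_2)$ for "only if", and exhibiting the preimage $(m_1\alpha_u)\otimes(1\otimes1)+(m_2-m_1)\otimes(\delta_u\otimes1-1\otimes\delta_u)$ for "if"), while your observation that $\{1\otimes1,1\otimes\delta_u\}$ is a left $R$-basis of $B_u$ gives the unique decomposition of $M\otimes B_u$ and delivers both directions in one computation; upon rewriting $\delta_u\otimes1$ as $\delta_u\cdot(1\otimes1)$, the preimage you produce coincides with the paper's.
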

\begin{proof}
Let $m\in M$, $p_1,p_2\in R$.
Then the image of $m\otimes (p_1\otimes p_2)\in M\otimes B_u$ in $(M\otimes B_u)_Q\simeq M_Q\oplus M_Q$ is $(mp_1p_2\alpha_u^{-1},mp_1u(p_2)\alpha_u^{-1})$.
Hence $(mp_1p_2\alpha_u^{-1})\alpha_u = mp_1p_2\in M$ and $(mp_1p_2\alpha_u^{-1}) - (mp_1u(p_2)\alpha_u^{-1}) = mp_1\partial_u(p_2)\in M$.

On the other hand, assume that $m_1\alpha_u \in M$ and $m_1 - m_2\in M$.
Take $\delta_u\in V$ such that $\langle\alpha_u^\vee,\delta_u\rangle = 1$.
Then we have $u(\delta_u) = \delta_u - \alpha_u$.
Hence the image of $(m_1\alpha_u)\otimes (1\otimes 1) + (m_2 - m_1)\otimes (\delta_u\otimes 1 - 1\otimes \delta_u)\in M$ is $(m_1,m_1) + ((m_2 - m_1)(\delta_u/\alpha_u),(m_2 - m_1)(\delta_u/\alpha_u)) - ((m_2 - m_1)(\delta_u/\alpha_u),(m_2 - m_1)(u(\delta_u)/\alpha_u)) = (m_1,m_2)$.
\end{proof}

In general, for a sequence $\underline{w} = (s_1,s_2,\ldots,s_l)\in S^l$ of elements in $S$, we put $B_{\underline{w}} = B_{s_1}\otimes\cdots\otimes B_{s_l}$.
Set $b_{\underline{w}} = (1\otimes 1)\otimes\cdots\otimes(1\otimes 1)\in B_{\underline{w}}$.
The main theorem of this paper is the following.
\begin{thm}\label{thm:main theorem}
Assume Assumption~\ref{assump:Assumption}.
There exists a morphism $\varphi\colon B_{\underline{x}}\to B_{\underline{y}}$ such that $\varphi(b_{\underline{x}}) = b_{\underline{y}}$.
\end{thm}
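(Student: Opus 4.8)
The plan is to construct $\varphi$ as the restriction to $B_{\underline{x}}$ of an explicit morphism $\varphi_Q\colon(B_{\underline{x}})_Q\to(B_{\underline{y}})_Q$ in $\mathcal{C}_Q$. First I would make the identifications of Section~\ref{subsec:Soergel bimodules} fully explicit: iterating $(B_u)_Q\simeq Q_e\oplus Q_u$ and $Q_a\otimes_Q Q_b\simeq Q_{ab}$ gives $(B_{\underline{w}})_Q\simeq\bigoplus_{e\in\{0,1\}^{\ell(\underline{w})}}Q_{\underline{w}^e}$, and under this identification the $e$-component of the image of $b_{\underline{w}}$ is $\prod_{i=1}^{\ell(\underline{w})}(s_1^{e_1}\cdots s_{i-1}^{e_{i-1}})(1/\alpha_{s_i})$, so that the $g$-isotypic part of $b_{\underline{w}}$ encodes $a^{\underline{w}}(g)$. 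Since a morphism in $\mathcal{C}_Q$ is merely a $W$-graded matrix over $Q$, and since $M\mapsto M_Q$ is faithful with the property that a morphism of $\mathcal{C}_Q$ carrying $B_{\underline{x}}$ into $B_{\underline{y}}$ restricts to a morphism of $\mathcal{C}$, I would take for $\varphi_Q$ the candidate morphism of \cite{arXiv:2011.05432}; it satisfies $\varphi_Q(b_{\underline{x}})=b_{\underline{y}}$. Thus the whole theorem reduces to the single inclusion $\varphi_Q(B_{\underline{x}})\subseteq B_{\underline{y}}$.

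To check this inclusion I would, first, record a formula for $\varphi_Q(m)$ for a general $m\in B_{\underline{x}}$: expanding $m$ in a left-$R$-basis of $B_{\underline{x}}$ built from the decompositions $R=R^{s_i}\oplus R^{s_i}\delta_{s_i}$ in the internal tensor slots and pushing through $\varphi_Q$, the components of $\varphi_Q(m)$ become, after clearing denominators, $\Coef$-combinations of the quantities $a^{\underline{v}}(g)$ for sequences $\underline{v}$ in $\{s,t\}$ that need not be reduced --- this is exactly why Section~\ref{sec:A calculation in the universal Coxeter system of rank two} evaluates $a^{\underline{v}}$ for \emph{all} sequences. Second, I would run Lemma~\ref{lem:criterion in M} as an algorithm, peeling the rightmost tensor factor off $B_{\underline{y}}=B_{\underline{y}_0}\otimes B_u$, where $u$ is the last letter of $\underline{y}$ and $\underline{y}_0$ is $\underline{y}$ with that letter removed: writing $\varphi_Q(m)=(n_1,n_2)$, membership in $B_{\underline{y}}$ is equivalent to $n_1\alpha_u\in B_{\underline{y}_0}$ and $n_1-n_2\in B_{\underline{y}_0}$, which one tests by recursing until the base case $B_{\emptyset}=R$, where membership becomes a divisibility statement in $R$.

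The place where Assumption~\ref{assump:Assumption} is unavoidable is this cascade of divisibility conditions. After substituting Theorem~\ref{thm:image of 1}, each condition reduces to an identity relating the two-colored quantum binomials $k_g^{\underline{v}}=\sqbinom{\ell(\underline{v})-1}{k}_{Z}$ to the root products $\prod_{\alpha\in X_g^w}\alpha$; the denominator clearing needed is precisely Lemma~\ref{lem:a in R} ($\pi_{\underline{x}}a^{\underline{v}}(1)\in R$) and Lemma~\ref{lem:calculation of pi_x/x_g^w}, and what makes the remaining quantum binomials not just integral but invertible --- so that the final equalities close --- is Proposition~\ref{prop:the assumptions}(2), i.e.\ Assumption~\ref{assump:Assumption}, together with $\pi_{\underline{y}}\in\Coef^{\times}\pi_{\underline{x}}$ and $[m_{s,t}-1]_X[m_{s,t}-1]_Y=1$. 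I expect the main obstacle to be exactly this bookkeeping: propagating, through $\ell(\underline{x})$ nested applications of Lemma~\ref{lem:criterion in M}, the cancellations between the $k_g^{\underline{v}}$ and the root products and invoking the explicit shape $\sqbinom{m_{s,t}-1}{k}_Z=\prod_{i=1}^{k}[m_{s,t}-1]_{\sigma^{i-1}(Z)}$ at each step --- not deep, but the combinatorial core. Finally, I would organize the argument symmetrically in $\underline{x}$ and $\underline{y}$, so that reading it in reverse also yields the converse (if $\varphi_Q$ and the analogous $\psi_Q$ restrict to $\mathcal{C}$, then Assumption~\ref{assump:Assumption} holds) mentioned in the introduction.
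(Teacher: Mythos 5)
Your proposal matches the paper's strategy point for point: define $\varphi_Q$ via the $G^f_e$-matrix of Elias--Williamson, reduce the theorem to $\varphi_Q(B_{\underline{x}})\subseteq B_{\underline{y}}$, expand a general element of $B_{\underline{x}}$ so that its $\varphi_Q$-image is expressed through quantities $a^{\underline{\tilde{v}}}(g)$ with $\underline{\tilde{v}}$ running over (possibly non-reduced) subsequences of $\underline{\tilde{x}}$, and then recurse through Lemma~\ref{lem:criterion in M}, peeling one $B_u$-factor off at a time until the test becomes a divisibility statement in $R$. The paper organizes this as an explicit twisted-Leibniz expansion with the Demazure operators $D_u^{(0)}=\partial_u$, $D_u^{(1)}=u$, rather than a basis $R=R^u\oplus R^u\delta_u$ in each slot, but these are interchangeable presentations of the same computation. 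Your framing of Proposition~\ref{prop:1 -> 1} as reading the argument ``in reverse'' for the converse is also the paper's intent.

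One substantive inaccuracy: you locate the role of Assumption~\ref{assump:Assumption} in making the quantum binomials \emph{invertible}. Invertibility of $\sqbinom{m_{s,t}-1}{k}_Z$ is indeed a consequence, but it is not what closes the induction. What the paper actually uses is the \emph{vanishing} $\sqbinom{m_{s,t}}{k}_Z=0$ for $1\le k\le m_{s,t}-1$: after appending one extra letter, one encounters $a^{\underline{\tilde{v}}}(g)$ with $\ell(\underline{\tilde{v}})=m_{s,t}+1$, and Theorem~\ref{thm:image of 1} exhibits $k_g^{\underline{\tilde{v}}}$ as a binomial $\sqbinom{m_{s,t}}{\cdot}_Z$ that is forced to vanish once $\ell(\underline{\tilde{v}})-\ell(g)\ge 3$; this kills the troublesome term outright. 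The second use of Assumption~\ref{assump:Assumption} is the even-balanced condition $\xi=1$ from Proposition~\ref{prop:the assumptions}(3), needed both in the last-step $f'=(1,\ldots,1)$ case of the induction and in Proposition~\ref{prop:1 -> 1}. You should also make explicit the shape of the inductive statement: the paper proves membership in $B_{\underline{w}}$ for a \emph{pair} of sequences $(\underline{\tilde{w}'},\underline{\tilde{w}})$ with $\ell(\underline{\tilde{w}})<m_{s,t}$ or $(\underline{\tilde{w}'},\underline{\tilde{w}})=(\underline{\tilde{x}},\underline{\tilde{y}})$, since the recursion naturally lengthens $\underline{\tilde{w}'}$ as it shortens $\underline{\tilde{w}}$, and this is exactly the hypothesis that makes the vanishing argument above apply at the right moment.
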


\subsection{Localized calculus}
Since $(B_u)_Q  = (B_u)_{Q}^e\oplus (B_u)_Q^u\simeq Q_e\oplus Q_u$, for $\underline{w} = (s_1,\ldots,s_l)\in S$, we have
\[
(B_{\underline{w}})_Q\simeq \bigoplus_{e = (e_i)\in \{0,1\}^l}Q_{s_1^{e_1}}\otimes\cdots\otimes Q_{s_l^{e_l}} \simeq \bigoplus_{e\in \{0,1\}^l}Q_{\underline{w}^e}.
\]
We call the component corresponding to $e$ the $e$-component of $(B_{\underline{w}})_Q$.
As an $R$-bimodule, 
\[
B_{\underline{w}} = (R\otimes_{R^{s_1}}R)\otimes_{R}(R\otimes_{R^{s_2}}R)\otimes_{R}\cdots\otimes_{R}(R\otimes_{R^{s_l}}R)(l)
\simeq R\otimes_{R^{s_1}}R\otimes_{R^{s_2}}\cdots\otimes_{R^{s_l}}R(l).
\]
The $e$-component of $p_0\otimes p_1\otimes\cdots\otimes p_l\in R\otimes_{R^{s_1}}R\otimes_{R^{s_2}}\cdots\otimes_{R^{s_l}}R(l)$ is 
\[
\left(\prod_{i = 1}^l s_1^{e_1}\cdots s_{i - 1}^{e_{i - 1}}\left(\frac{p_{i - 1}}{\alpha_{s_i}}\right)\right)s_1^{e_1}\cdots s_l^{e_l}(p_l).
\]

We construct $\varphi\colon B_{\underline{x}}\to B_{\underline{y}}$ as follows.
First we define $\varphi_Q\colon (B_{\underline{x}})_Q\simeq \bigoplus_{e\in \{0,1\}^{m_{s,t}}}Q_{\underline{x}^e}\to \bigoplus_{f\in \{0,1\}^{m_{s,t}}}Q_{\underline{y}^{f}}\simeq (B_{\underline{y}})_Q$ explicitly and we will prove that $\varphi_Q$ satisfies $\varphi_Q(B_{\underline{x}})\subset B_{\underline{y}}$.
The definition of $\varphi_Q$ is given in \cite[2.6]{arXiv:2011.05432}.
For $\underline{w} = (s_1,\ldots,s_l)\in S^l$ and $e = (e_1,\ldots,e_l)\in \{0,1\}^l$, we put $\zeta_{\underline{w}}(e) = \prod_{i = 1}^l s_1^{e_1}\cdots s_{i - 1}^{e_{i - 1}}(\alpha_{s_i})$.
Then set
\[
G_{e}^{f} = 
\begin{cases}
\dfrac{\pi_{\underline{x}}}{\zeta_{\underline{y}}(f)} & (\underline{x}^e = \underline{y}^f),\\
0 & (\underline{x}^e \ne \underline{y}^f).
\end{cases}
\]
Now we define $\varphi_Q\colon \bigoplus_{e\in \{0,1\}^{m_{s,t}}}Q_{\underline{x}^e}\to \bigoplus_{f\in \{0,1\}^{m_{s,t}}}Q_{\underline{y}^f}$ by 
\[
\varphi_Q((q_e)) = \left(\sum_{e\in \{0,1\}^{m_{s,t}}}G_{e}^{f}q_e\right)_f = \left(\frac{\pi_{\underline{x}}}{\zeta_{\underline{y}}(f)}\sum_{\underline{x}^e =\underline{y}^f}q_e\right)_f.
\]
By the same way, we also define $\psi_Q\colon (B_{\underline{y}})_Q\to (B_{\underline{x}})_Q$.
From the definition, we have
\[
\varphi_Q(b_{\underline{x}})
=
\left(\frac{\pi_{\underline{x}}}{\zeta_{\underline{y}}(f)}\sum_{\underline{x}^e = \underline{y}^f}\prod_{i = 1}^{m_{s,t}}s_1^{e_1}\cdots s_{i - 1}^{e_{i - 1}}\left(\frac{1}{\alpha_{s_i}}\right)\right)_f.
\]
Define $r\colon W\to \widetilde{W}$ as follows.
If $w\in W$ is not the longest element, $r(w) = \tilde{s}_1\ldots \tilde{s}_l$ where $w = s_1\cdots s_l$ is the reduced expression of $w$.
If $w$ is the longest element then $r(w) = \tilde{x}$.
Then for $e\in \{0,1\}^{m_{s,t}}$, $\underline{x}^e = g$ if and only if $\underline{\tilde{x}}^e = r(g)$.
Therefore we have
\[
\varphi_Q(b_{\underline{x}})
=
\left(\frac{\pi_{\underline{x}}}{\zeta_{\underline{y}}(f)}a^{\underline{\tilde{x}}}(r(\underline{y}^f))\right).
\]

\begin{prop}\label{prop:1 -> 1}
We have $\varphi_Q(b_{\underline{x}}) = b_{\underline{y}}$ and $\psi_Q(b_{\underline{y}}) = b_{\underline{x}}$ if and only if Assumption~\ref{assump:Assumption} holds.
\end{prop}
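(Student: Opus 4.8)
The plan is to identify the $f$-components of both sides of the proposed identity $\varphi_Q(b_{\underline{x}})=b_{\underline{y}}$ inside $(B_{\underline{y}})_Q\simeq\bigoplus_{f\in\{0,1\}^{m_{s,t}}}Q_{\underline{y}^f}$, and then to recognise the resulting family of scalar equations as the content of Proposition~\ref{prop:the assumptions}. First I would compute the $f$-component of $b_{\underline{y}}$: applying the description of the $e$-component of a pure tensor in $R\otimes_{R^{s_1}}\cdots\otimes_{R^{s_{m_{s,t}}}}R$ to the tensor all of whose entries are $1$, it equals $\prod_{i}s_1^{f_1}\cdots s_{i-1}^{f_{i-1}}(1/\alpha_{s_i})=1/\zeta_{\underline{y}}(f)$ (the $s_i$ here being the letters of $\underline{y}$). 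Comparing with the formula $\varphi_Q(b_{\underline{x}})=\bigl(\pi_{\underline{x}}\zeta_{\underline{y}}(f)^{-1}a^{\underline{\tilde{x}}}(r(\underline{y}^f))\bigr)_f$ established above, $\varphi_Q(b_{\underline{x}})=b_{\underline{y}}$ is equivalent to $\pi_{\underline{x}}\,a^{\underline{\tilde{x}}}(r(\underline{y}^f))=1$ for every $f$. Since $\underline{y}$ is a reduced expression of the longest element of $W$, the subwords $\underline{y}^f$ exhaust $W$, so this is the single family of conditions $\pi_{\underline{x}}\,a^{\underline{\tilde{x}}}(r(g))=1$ for all $g\in W$; by the evident symmetry exchanging $s$ with $t$, $X$ with $Y$, and $\underline{x}$ with $\underline{y}$, the equality $\psi_Q(b_{\underline{y}})=b_{\underline{x}}$ is equivalent to the mirror conditions $\pi_{\underline{y}}\,a^{\underline{\tilde{y}}}(r(g))=1$ for all $g\in W$.

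Next I would evaluate $\pi_{\underline{x}}\,a^{\underline{\tilde{x}}}(r(g))$ using Theorem~\ref{thm:image of 1} and Lemma~\ref{lem:calculation of pi_x/x_g^w}. Writing $a^{\underline{\tilde{x}}}(r(g))=k^{\underline{\tilde{x}}}_{r(g)}\big/\prod_{\tilde{\delta}\in\widetilde{X}^{\tilde{x}}_{r(g)}}\delta$ and dividing by the value of $(\prod_{\tilde{\delta}}\delta)/\pi_{\underline{x}}$ supplied by Lemma~\ref{lem:calculation of pi_x/x_g^w}, and noting that $\ell(r(g))=\ell(g)$ and that the dichotomy $\tilde{s}r(g)>r(g)$ versus $\tilde{s}r(g)<r(g)$ matches $sg>g$ versus $sg<g$ in $W$, the condition $\pi_{\underline{x}}\,a^{\underline{\tilde{x}}}(r(g))=1$ becomes, when $sg>g$,
\[
\sqbinom{m_{s,t}-1}{k}_{\sigma^{m_{s,t}-1}(X)}=\xi\prod_{i=1}^{k}[m_{s,t}-1]_{\sigma^{i-1}(X)},\qquad k=\floor*{(m_{s,t}-\ell(g)-1)/2},
\]
and, when $sg<g$, the same identity with the factor $\xi$ omitted and $k=\floor*{(m_{s,t}-\ell(g))/2}$. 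As $g$ runs over the elements of $W$ with $sg>g$ the length $\ell(g)$ runs over $\{0,\dots,m_{s,t}-1\}$, and over those with $sg<g$ it runs over $\{1,\dots,m_{s,t}\}$; in either case the index $k$ runs over all of $\{0,1,\dots,\floor*{(m_{s,t}-1)/2}\}$.

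Now I would extract the assumption. The $k=0$ instance of the ascent family ($sg>g$) reads $1=\xi$, forcing the realization to be even-balanced; granting $\xi=1$, both families reduce to $\sqbinom{m_{s,t}-1}{k}_{\sigma^{m_{s,t}-1}(X)}=\prod_{i=1}^{k}[m_{s,t}-1]_{\sigma^{i-1}(X)}$ for $0\le k\le\floor*{(m_{s,t}-1)/2}$, which, using $[m_{s,t}-1]_X[m_{s,t}-1]_Y=1$ and Lemma~\ref{lem:quantum num, even, odd}, is exactly condition (3) of Proposition~\ref{prop:the assumptions} for the color $\sigma^{m_{s,t}-1}(X)$. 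Running the mirror argument for $\psi_Q$ gives the same for $\sigma^{m_{s,t}-1}(Y)$; since $\{\sigma^{m_{s,t}-1}(X),\sigma^{m_{s,t}-1}(Y)\}=\{X,Y\}$, the conjunction of $\varphi_Q(b_{\underline{x}})=b_{\underline{y}}$ and $\psi_Q(b_{\underline{y}})=b_{\underline{x}}$ is equivalent to condition (3) of Proposition~\ref{prop:the assumptions}, hence to Assumption~\ref{assump:Assumption}. For the converse, if Assumption~\ref{assump:Assumption} holds then $\xi=1$ together with condition (2) of Proposition~\ref{prop:the assumptions} recovers every displayed identity, so $\varphi_Q(b_{\underline{x}})=b_{\underline{y}}$ and $\psi_Q(b_{\underline{y}})=b_{\underline{x}}$.

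The hard part will be the bookkeeping in the middle step rather than any new idea: one has to track the extra $\sigma$ carefully (the color $\sigma^{m_{s,t}-1}(X)$ in $k^{\underline{\tilde{x}}}_{r(g)}$ against the color $X$ in Lemma~\ref{lem:calculation of pi_x/x_g^w}), check that as $\ell(g)$ ranges over its allowed values the binomial index $k$ sweeps out $\{0,\dots,\floor*{(m_{s,t}-1)/2}\}$ without gaps, and separate the cases $sg>g$ and $sg<g$ so that precisely the binomials with $0\le k\le(m_{s,t}-1)/2$ are forced and nothing stronger is claimed. The genuinely new input, beyond the computations of the previous section, is the deduction of even-balancedness, and it falls out for free from the $k=0$ instance of the ascent family.
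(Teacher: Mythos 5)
Your proposal is correct and follows essentially the same route as the paper's: compute the $f$-component of $\varphi_Q(b_{\underline{x}})$ via Theorem~\ref{thm:image of 1} and Lemma~\ref{lem:calculation of pi_x/x_g^w}, compare it to $1/\zeta_{\underline{y}}(f)$, observe that the $k=0$ instance forces $\xi=1$, and recognize the remaining family as condition (3) of Proposition~\ref{prop:the assumptions} in the two colors $\sigma^{m_{s,t}-1}(X)$ and $\sigma^{m_{s,t}-1}(Y)$. The only cosmetic difference is that you parametrize the scalar conditions by $g\in W$ (using both ascent and descent cases), whereas the paper extracts them from the specific subsequences $f_k=(1^{m_{s,t}-k-1},0^{k+1})$, which are all ascents; the content, including the $\sigma$-bookkeeping, is the same.
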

\begin{proof}
Set $\varepsilon(f) = 1$ if $\tilde{s}r(\underline{y}^f) > r(\underline{y}^f)$ and $\varepsilon(f) = 0$ otherwise.
By Theorem~\ref{thm:image of 1} and Lemma~\ref{lem:calculation of pi_x/x_g^w}, the $f$-component of $\varphi_Q(b_{\underline{x}})$ is
\[
\frac{1}{\zeta_{\underline{y}}(f)}\sqbinom{m_{s,t} - 1}{\floor{\frac{m_{s,t} - \ell(\underline{y}^f) - \varepsilon(f)}{2}}}_{\sigma^{m_{s,t} - 1}(X)}\left(\xi^{\varepsilon(f)}\prod_{i = 1}^{\floor{\frac{m_{s,t} - \ell(\underline{y}^f) - \varepsilon(f)}{2}}}[m_{s,t} - 1]_{\sigma^{i - 1}(X)}\right)^{-1}.
\]
On the other hand, the $f$-component of $b_{\underline{y}}$ is $1/\zeta_{\underline{y}}(f)$.
Therefore $\varphi_Q(b_{\underline{x}}) = b_{\underline{y}}$ if and only if 
\begin{equation}\label{eq:condition of 1 -> 1}
\sqbinom{m_{s,t} - 1}{\floor{\frac{m_{s,t} - \ell(\underline{y}^f) - \varepsilon(f)}{2}}}_{Z} = \xi^{\varepsilon(f)}\prod_{i = 1}^{\floor{\frac{m_{s,t} - \ell(\underline{y}^f) - \varepsilon(f)}{2}}}[m_{s,t} - 1]_{\sigma^{i - 1}(Z)}.
\end{equation}
for any $f\in \{0,1\}^l$ where $Z = \sigma^{m_{s,t} - 1}(X)$.
Here we used $[m_{s,t} - 1]_{\sigma^{i - m_{s,t}}(Z)} = [m_{s,t} - 1]_{\sigma^{i - 1}(Z)}$ which follows from Lemma~\ref{lem:quantum num, even, odd} (\ref{lem:enum:quantum num, shift}).
With $Z = \sigma^{m_{s,t} - 1}(Y)$ we have another equation which is equivalent to $\psi(b_{\underline{y}}) = b_{\underline{x}}$.
Hence $\varphi(b_{\underline{x}}) = b_{\underline{y}}$ and $\psi(b_{\underline{y}}) = b_{\underline{x}}$ if and only if \eqref{eq:condition of 1 -> 1} holds for any $f\in \{0,1\}^{m_{s,t}}$ and $Z\in \{X,Y\}$.

We assume that $\varphi_Q(b_{\underline{x}}) = b_{\underline{y}}$ and $\psi_Q(b_{\underline{y}}) = b_{\underline{x}}$.
Set $f_k = (1^{m_{s,t} - k - 1},0^{k + 1})\in \{0,1\}^{m_{s,t}}$ for $0\le k\le m_{s,t} - 1$.
Then $\tilde{s}r(\underline{y}^{f_k}) > r(\underline{y}^{f_k})$ and $\ell(\underline{y}^{f_k}) = m_{s,t} - k - 1$.
Take $f = f_k$ in \eqref{eq:condition of 1 -> 1}.
Then we have $\sqbinom{m_{s,t} - 1}{\floor{k/2}}_Z = \xi\prod_{i = 1}^{\floor{k/2}}[m_{s,t} - 1]_{\sigma^{i - 1}(Z)}$.
Let $k = 0$.
Then $\xi = 1$.
Therefore $V$ is even-balanced.
Hence for any $0\le k\le m_{s,t} - 1$, we have$\sqbinom{m_{s,t} - 1}{\floor{k/2}}_Z = \prod_{i = 1}^{\floor{k/2}}[m_{s,t} - 1]_{\sigma^{i - 1}(Z)}$.
Therefore we have Assumption~\ref{assump:Assumption}.
The converse implication is easy to prove.
\end{proof}

For $\underline{\tilde{w}} = (\tilde{s}_1,\ldots,\tilde{s_l})\in S^l$ and $c = (c_1,\ldots,c_l)\in \{0,1\}^l$, we define the sequence $\underline{\tilde{w}}^{(c)}$ by removing $i$-the entry from $\underline{\tilde{w}}$ when $c_i = 0$.
For $u\in S$, we put $D_u^{(0)} = \partial_u$ and $D_u^{(1)} = u$.
\begin{lem}
Let $\underline{\tilde{w}} = (\tilde{s}_1,\ldots,\tilde{s}_l)\in \widetilde{S}^l$, $\tilde{g}\in \widetilde{W}$ and $g$ the image of $\tilde{g}$ in $W$.
For $p_1,\ldots,p_l\in R$, we have
\[
\sum_{\underline{\tilde{w}}^{e} = \tilde{g}}\prod_{i = 1}^ls_1^{e_1}\cdots s_{i - 1}^{e_{i - 1}}\left(\frac{p_i}{\alpha_{s_i}}\right)
=
\sum_{c\in \{0,1\}^l} a^{\underline{\tilde{w}}^{(c)}}(\tilde{g})g(D_{s_l}^{(c_l)}(p_lD_{s_{l - 1}}^{(c_{l - 1})}(\cdots (p_2D_{s_1}^{(c_1)}(p_1))\cdots))).
\]
\end{lem}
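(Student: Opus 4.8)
The plan is to induct on $l$. For $l = 0$ both sides equal $1$ if $\tilde g = 1$ and $0$ otherwise, since $a^{()}(\tilde g)$ has exactly this value and the empty $D$-word is $1$. For the inductive step I would strip off the last letter: write $\underline{\tilde w}' := (\tilde s_1,\dots,\tilde s_{l-1})$ and split the left-hand sum according to the value of $e_l$, exactly as in the computation leading to Lemma~\ref{lem:inductive formula of a}. If $e_l = 0$ then $(\underline{\tilde w}')^{e'} = \tilde g$, so the $i = l$ factor equals $g(p_l)/g(\alpha_{s_l})$ and factors out of that part of the sum; if $e_l = 1$ then $(\underline{\tilde w}')^{e'} = \tilde g\tilde s_l$ and, since $s_l(\alpha_{s_l}) = -\alpha_{s_l}$, the $i = l$ factor equals $-g(s_l(p_l))/g(\alpha_{s_l})$. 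Writing $\Sigma(\tilde h)$ for the length-$(l-1)$ sum attached to $\underline{\tilde w}'$ and $\tilde h$, the left-hand side therefore equals
\[
\frac{1}{g(\alpha_{s_l})}\Bigl(g(p_l)\,\Sigma(\tilde g) - g(s_l(p_l))\,\Sigma(\tilde g\tilde s_l)\Bigr).
\]

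Next I would invoke the inductive hypothesis, which rewrites $\Sigma(\tilde h) = \sum_{c'\in\{0,1\}^{l-1}} a^{(\underline{\tilde w}')^{(c')}}(\tilde h)\, h\bigl(P'(c')\bigr)$, where $P'(c') = D_{s_{l-1}}^{(c'_{l-1})}\bigl(p_{l-1}\cdots D_{s_1}^{(c'_1)}(p_1)\cdots\bigr)\in R$ and $h$ is the image of $\tilde h$ in $W$. On the right-hand side of the claimed identity I would split the sum over $c = (c',c_l)$ according to $c_l$. For $c_l = 1$ the sequence $\underline{\tilde w}^{(c)}$ is $\bigl((\underline{\tilde w}')^{(c')},\tilde s_l\bigr)$ and the inner $D$-word is $s_l(p_l\,P'(c')) = s_l(p_l)\,s_l(P'(c'))$, so Lemma~\ref{lem:inductive formula of a} gives $a^{((\underline{\tilde w}')^{(c')},\tilde s_l)}(\tilde g) = g(\alpha_{s_l})^{-1}\bigl(a^{(\underline{\tilde w}')^{(c')}}(\tilde g) - a^{(\underline{\tilde w}')^{(c')}}(\tilde g\tilde s_l)\bigr)$; for $c_l = 0$ the sequence is $(\underline{\tilde w}')^{(c')}$ and the inner $D$-word is $\partial_{s_l}(p_l\,P'(c')) = \alpha_{s_l}^{-1}\bigl(p_l\,P'(c') - s_l(p_l)\,s_l(P'(c'))\bigr)$. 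Applying $g$, summing over $c'$, and using $(g s_l)\bigl(P'(c')\bigr) = g\bigl(s_l(P'(c'))\bigr)$, the two terms proportional to $a^{(\underline{\tilde w}')^{(c')}}(\tilde g)\,g(s_l(p_l))\,g(s_l(P'(c')))$ cancel, and what survives is exactly the displayed expression for the left-hand side. Hence the two sides coincide.

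The argument is a routine induction; reducedness of $\underline{\tilde w}$ plays no role. The only points that require attention are the sign $s_l(\alpha_{s_l}) = -\alpha_{s_l}$, which is what matches up the $e_l = 1$ and $c_l = 1$ contributions, and the bookkeeping between $\widetilde W$ and its image $W$ — recall that each $a$-term is an element of $Q$, obtained from the universal Coxeter system, while the operators $D_u^{(c)}$, the factors $\alpha_{s_i}$, and the twisting by $g$ only see the images in $W$. I expect this bookkeeping, rather than any genuine difficulty, to be the main thing to keep straight; once both sides are brought into the normal form $\sum_{c'} a^{(\underline{\tilde w}')^{(c')}}(\,\cdot\,)\,g(\,\cdot\,)$, the identity is forced term by term in $c'$.
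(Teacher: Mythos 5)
Your proof is correct and follows essentially the same route as the paper: induction on $l$, splitting the left-hand sum by $e_l$, factoring out the last variable with the sign from $s_l(\alpha_{s_l})=-\alpha_{s_l}$, applying the inductive hypothesis, and then using Lemma~\ref{lem:inductive formula of a} to match the $c_l=0$ and $c_l=1$ contributions on the right. The only difference is organizational — the paper fixes $c'$ and combines the $e_l=0,1$ terms directly into the two $D$-summands, whereas you expand both sides to a common normal form and cancel — but the underlying computation and the two ingredients (the $\partial_{s_l}$/\(s_l\) split and the recursion for $a$) are identical.
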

\begin{proof}
We prove the lemma by induction on $l = \ell(\underline{w})$.
Set $\underline{\tilde{v}} = (\tilde{s}_1,\ldots,\tilde{s}_{l - 1})$ and $p_{\underline{\tilde{v}}}^{(c)} = D_{s_{l - 1}}^{(c_{l - 1})}(p_{l - 1}D_{s_{l - 2}}^{(c_{l - 2})}(\cdots (p_2D_{s_1}^{(c_1)}(p_1))\cdots))$.
The $e_{l} = 0$ part of the left hand side in the lemma is
\[
g\left(\frac{p_{l}}{\alpha_{s_{l}}}\right) \sum_{e\in \{0,1\}^{l - 1},\underline{\tilde{v}}^{e} = \tilde{g}}\prod_{i = 1}^{l - 1}s_1^{e_1}\cdots s_{i - 1}^{e_{i - 1}}\left(\frac{p_i}{\alpha_{s_i}}\right)
=
g\left(\frac{p_{l}}{\alpha_{s_{l}}}\right)\sum_{c\in \{0,1\}^{l - 1}} a^{\underline{\tilde{v}}^{(c)}}(\tilde{g})g(p_{\underline{\tilde{v}}}^{(c)})
\]
by inductive hypothesis and similarly the $e_{l} = 1$ part is
\[
gs_{l}\left(\frac{p_{l}}{\alpha_{s_{l}}}\right)\sum_{c\in \{0,1\}^{l - 1}} a^{\underline{\tilde{v}}^{(c)}}(\tilde{g}\tilde{s_{l}})gs_{l}(p_{\underline{\tilde{v}}}^{(c)})
=
-g\left(\frac{s_{l}(p_{l})}{\alpha_{s_{l}}}\right)\sum_{c\in \{0,1\}^{l - 1}} a^{\underline{\tilde{v}}^{(c)}}(\tilde{g}\tilde{s_{l}})gs_{l}(p_{\underline{\tilde{v}}}^{(c)}).
\]
We have
\begin{align*}
& g\left(\frac{p_{l}}{\alpha_{s_{l}}}\right)a^{\underline{\tilde{v}}^{(c)}}(\tilde{g})g(p_{\underline{\tilde{v}}}^{(c)})
-g\left(\frac{s_{l}(p_{l})}{\alpha_{s_{l}}}\right)a^{\underline{v}^{(c)}}(\tilde{g}\tilde{s_{l}})gs_{l}(p_{\underline{\tilde{v}}}^{(c)})
\\
& = 
a^{\underline{\tilde{v}}^{(c)}}(\tilde{g})g\left(\frac{p_{l}p_{\underline{\tilde{v}}}^{(c)} - s_{l}(p_{l}p_{\underline{\tilde{v}}}^{(c)})}{\alpha_{s_{l}}}\right)
+
\frac{a^{\underline{\tilde{v}}^{(c)}}(\tilde{g}) - a^{\underline{\tilde{v}}^{(c)}}(\tilde{g}\tilde{s_l})}{g(\alpha_{s_{l}})}gs_{l}(p_{l}p_{\underline{\tilde{v}}}^{(c)})\\
& = 
a^{\underline{\tilde{v}}^{(c)}}(\tilde{g})g(\partial_{s_{l}}(p_{l}p_{\underline{\tilde{v}}}^{(c)}))
+
a^{(\underline{\tilde{v}}^{(c)},s_{l})}(\tilde{g})gs_{l}(p_{l}p_{\underline{\tilde{v}}}^{(c)})
&& \text{by Lemma~\ref{lem:inductive formula of a}}
\\
& = \sum_{d = 0}^1a^{\underline{w}^{(c,d)}}(\tilde{g})g(D_{s_l}^{(d)}(p_lp_{\underline{\tilde{v}}}^{(c)})).
\end{align*}
We get the lemma.
\end{proof}

Therefore we get the following.

\begin{cor}
Take $s_1,\ldots,s_{m_{s,t}}\in S$ such that $\underline{x} = (s_1,\ldots,s_{m_{s,t}})$.
For $p_1,\ldots,p_{m_{s,t}}\in R$, $\varphi_Q(p_1\otimes p_2\otimes \cdots \otimes p_{m_{s,t}}\otimes 1)$ is given by
\[
\left(
\frac{\pi_{\underline{x}}}{\zeta_{\underline{y}}(f)}
\sum_{c\in \{0,1\}^{m_{s,t}}} a^{\underline{\tilde{x}}^{(c)}}(r(\underline{y}^f))\underline{y}^{f}(D_{s_{m_{s,t}}}^{(c_{m_{s,t}})}(p_{m_{s,t}}D_{s_{m_{s,t} - 1}}^{(c_{m_{s,t} - 1})}(\cdots (p_2D_{s_1}^{(c_1)}(p_1))\cdots)))
\right)_f.
\]
\end{cor}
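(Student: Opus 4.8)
The plan is to first read off the $e$-components of $p_1\otimes p_2\otimes\cdots\otimes p_{m_{s,t}}\otimes 1$, then substitute them into the definition of $\varphi_Q$, and finally apply the preceding lemma termwise in $f$. Applying the formula recalled at the beginning of this subsection for the $e$-component of an element of $R\otimes_{R^{s_1}}R\otimes_{R^{s_2}}\cdots\otimes_{R^{s_{m_{s,t}}}}R(m_{s,t})$, with the last tensor factor equal to $1$ (so that the factor $s_1^{e_1}\cdots s_{m_{s,t}}^{e_{m_{s,t}}}(p_{m_{s,t}})$ of the recalled formula becomes $\underline{x}^e(1)=1$), we see that $p_1\otimes p_2\otimes\cdots\otimes p_{m_{s,t}}\otimes 1$ corresponds under $(B_{\underline{x}})_Q\simeq\bigoplus_{e}Q_{\underline{x}^e}$ to the tuple $(q_e)_e$ with $q_e=\prod_{i=1}^{m_{s,t}}s_1^{e_1}\cdots s_{i-1}^{e_{i-1}}\!\left(\tfrac{p_i}{\alpha_{s_i}}\right)$.

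Next I would substitute $(q_e)_e$ into the definition of $\varphi_Q$, which gives for the $f$-component
\[
\frac{\pi_{\underline{x}}}{\zeta_{\underline{y}}(f)}\sum_{\underline{x}^e=\underline{y}^f}\prod_{i=1}^{m_{s,t}}s_1^{e_1}\cdots s_{i-1}^{e_{i-1}}\left(\frac{p_i}{\alpha_{s_i}}\right).
\]
I would then rewrite the index condition $\underline{x}^e=\underline{y}^f$ as $\underline{\tilde{x}}^e=r(\underline{y}^f)$, using the equivalence established above, and apply the preceding lemma with $\underline{\tilde{w}}=\underline{\tilde{x}}$ and $\tilde{g}=r(\underline{y}^f)$, whose image in $W$ is $\underline{y}^f$ itself since $r$ is a section of $\widetilde{W}\to W$. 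This turns the inner sum into $\sum_{c\in\{0,1\}^{m_{s,t}}}a^{\underline{\tilde{x}}^{(c)}}(r(\underline{y}^f))\,\underline{y}^f\bigl(D_{s_{m_{s,t}}}^{(c_{m_{s,t}})}(p_{m_{s,t}}D_{s_{m_{s,t}-1}}^{(c_{m_{s,t}-1})}(\cdots(p_2D_{s_1}^{(c_1)}(p_1))\cdots))\bigr)$, which, after multiplying by $\pi_{\underline{x}}/\zeta_{\underline{y}}(f)$, is exactly the asserted formula.

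There is no real obstacle here; the argument is a direct assembly of the $e$-component formula, the definition of $\varphi_Q$, and the lemma. The only points needing care are the bookkeeping of tensor indices --- the element has $m_{s,t}+1$ factors, the last being $1$, so the localization formula is applied with $l=m_{s,t}$ and the ``$p_{i-1}$'' of that formula identified with the $p_i$ of the corollary --- and the observation that the image in $W$ of $\tilde g=r(\underline{y}^f)$ is $\underline{y}^f$, so that the outer twist and the operators $D_{s_i}^{(c_i)}$ appear precisely as displayed.
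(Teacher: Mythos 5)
Your proof is correct and is exactly the direct assembly the paper has in mind: read off the $e$-components via the displayed localization formula (with the last tensor factor $1$, making the outer twist disappear), plug into the definition of $\varphi_Q$, rewrite $\underline{x}^e=\underline{y}^f$ as $\underline{\tilde{x}}^e=r(\underline{y}^f)$, and apply the preceding lemma with $\underline{\tilde{w}}=\underline{\tilde{x}}$ and $\tilde{g}=r(\underline{y}^f)$. The paper leaves this unproved (stating only ``Therefore we get the following''), and your argument fills that in accurately.
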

Hence to prove $\varphi_Q(B_{\underline{x}})\subset B_{\underline{y}}$, it is sufficient to prove that $((\pi_{\underline{x}}/\zeta_{\underline{y}}(f))a^{\underline{\tilde{x}}^{(c)}}(r(\underline{y}^f))\underline{y}^f(p))_f$ is in $B_{\underline{y}}$ for any $p\in R$.
To proceed the induction, we formulate as follows.

\begin{lem}
Assume Assumption~\ref{assump:Assumption}.
Let $p\in R$, $\underline{\tilde{w}}\in S^l$ and $\underline{\tilde{w}'}\in S^{l'}$ such that $l,l'\le m_{s,t}$.
We assume that $l < m_{s,t}$ or $(\underline{\tilde{w}'},\underline{\tilde{w}}) = (\underline{\tilde{x}},\underline{\tilde{y}})$.
Then we have
\[
\left(
\frac{\pi_{\underline{x}}}{\zeta_{\underline{w}}(f)}
a^{\underline{\tilde{w}'}}(r(\underline{w}^{f}))
\underline{w}^{f}(p)
\right)_f\in B_{\underline{w}}.
\]
\end{lem}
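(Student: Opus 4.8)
The plan is to argue by induction on $l = \ell(\underline{\tilde{w}})$. For $l = 0$ the asserted membership reads $\pi_{\underline{x}}a^{\underline{\tilde{w}'}}(1)p \in R$, which is Lemma~\ref{lem:a in R} (using $l' \le m_{s,t}$). Before the inductive step I would dispose of two situations. If $\underline{\tilde{w}'}$ is not a reduced expression, then $a^{\underline{\tilde{w}'}} \equiv 0$ and there is nothing to prove. If $\ell(\underline{\tilde{w}'}) = m_{s,t}$, then $\underline{\tilde{w}'} \in \{\underline{\tilde{x}},\underline{\tilde{y}}\}$, and here Assumption~\ref{assump:Assumption} enters: by Theorem~\ref{thm:image of 1}, Lemma~\ref{lem:calculation of pi_x/x_g^w} and Proposition~\ref{prop:the assumptions} — equivalently, by reading off from Proposition~\ref{prop:1 -> 1} that $\pi_{\underline{x}}a^{\underline{\tilde{x}}}(\tilde{g}) = 1$ for all $\tilde{g} \le \underline{\tilde{x}}$, and symmetrically for $\underline{\tilde{y}}$ — the scalar $a^{\underline{\tilde{w}'}}(r(\underline{w}^f))$ is a fixed unit of $\Coef$ times $\pi_{\underline{x}}^{-1}$ for every $f$, so the element in question is a unit multiple of the image of $1\otimes\cdots\otimes 1\otimes p$ and lies in $B_{\underline{w}}$.

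In the remaining case $1 \le l$ and $\ell(\underline{\tilde{w}'}) < m_{s,t}$, the hypothesis forces $l < m_{s,t}$, so every $\underline{w}^f$ and every subword of $\underline{w}$ has length $< m_{s,t}$; in particular $r$ is compatible with all the cancellations that will occur (on the only element where it is not, the longest element, $a^{\underline{\tilde{w}'}}$ vanishes for degree reasons). Write $\underline{\tilde{w}} = (\underline{\tilde{v}},\tilde{u})$ with $\ell(\underline{\tilde{v}}) = l - 1$ and identify $(B_{\underline{w}})_Q \cong (B_{\underline{v}})_Q \oplus (B_{\underline{v}})_Q$ with the coordinate $m_1$ coming from $e_l = 0$ and $m_2$ from $e_l = 1$. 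By Lemma~\ref{lem:criterion in M} it suffices to show $m_1\alpha_u \in B_{\underline{v}}$ and $m_1 - m_2 \in B_{\underline{v}}$. Since $\zeta_{\underline{w}}(f',0) = \zeta_{\underline{w}}(f',1) = \zeta_{\underline{v}}(f')\,\underline{v}^{f'}(\alpha_u)$, a direct computation shows $m_1\alpha_u$ is exactly the element of the lemma attached to $(\underline{\tilde{v}},\underline{\tilde{w}'},p)$, hence in $B_{\underline{v}}$ by the inductive hypothesis (as $\ell(\underline{\tilde{v}}) < m_{s,t}$).

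For $m_1 - m_2$ I would write $p = \partial_u(p)\alpha_u + u(p)$, substitute into the $f'$-component, divide by $\underline{v}^{f'}(\alpha_u)$, and use Lemma~\ref{lem:inductive formula of a} in the form
\[
\frac{a^{\underline{\tilde{w}'}}(r(\underline{v}^{f'})) - a^{\underline{\tilde{w}'}}(r(\underline{v}^{f'}u))}{\underline{v}^{f'}(\alpha_u)} = a^{(\underline{\tilde{w}'},\tilde{u})}(r(\underline{v}^{f'})).
\]
This exhibits $m_1 - m_2$ as the sum of the element attached to $(\underline{\tilde{v}},\underline{\tilde{w}'},\partial_u(p))$ and the element attached to $(\underline{\tilde{v}},(\underline{\tilde{w}'},\tilde{u}),u(p))$; both lie in $B_{\underline{v}}$ by the inductive hypothesis, the first because $\ell(\underline{\tilde{v}}) < m_{s,t}$ and the second because $\ell(\underline{\tilde{v}}) < m_{s,t}$ while $\ell((\underline{\tilde{w}'},\tilde{u})) = l' + 1 \le m_{s,t}$ (here $l' < m_{s,t}$). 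Lemma~\ref{lem:criterion in M} then closes the induction.

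The step I expect to be the main obstacle is the boundary case $\ell(\underline{\tilde{w}'}) = m_{s,t}$: there the plain induction breaks (the tensor-product step would raise the length of $\underline{\tilde{w}'}$ above $m_{s,t}$), and it is precisely Assumption~\ref{assump:Assumption} — the vanishing of $\sqbinom{m_{s,t}}{k}$ for $1 \le k \le m_{s,t}-1$ — that makes $\pi_{\underline{x}}a^{\underline{\tilde{x}}}(\tilde{g})$ constant in $\tilde{g}$ and so makes this case collapse to a triviality. A secondary point requiring care is the bookkeeping for $r$, which fails to be a homomorphism only at the longest element; once $l < m_{s,t}$ the longest element never appears, and in the boundary case the discrepancy is harmless because the relevant $a$ vanishes there.
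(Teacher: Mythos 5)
Your proof is correct, and the overall skeleton — induction on $l=\ell(\underline{\tilde{w}})$, the base case via Lemma~\ref{lem:a in R}, peeling off the last factor with Lemma~\ref{lem:criterion in M}, and splitting $m_1-m_2$ via $p=\partial_u(p)\alpha_u+u(p)$ together with Lemma~\ref{lem:inductive formula of a} — is the same as the paper's. The genuine difference lies in how the boundary $\ell(\underline{\tilde{w}'})=m_{s,t}$ is handled. The paper keeps $\underline{\tilde{w}'}$ of length $m_{s,t}$ inside the induction, which then produces $(\underline{\tilde{w}'},\tilde{s}_l)$ of length $m_{s,t}+1$; it disposes of this by showing that $a^{(\underline{\tilde{w}'},\tilde{s}_l)}(r(\underline{v}^{f'}))=0$ from Theorem~\ref{thm:image of 1} together with the vanishing of $\sqbinom{m_{s,t}}{k}$, and it treats the $l=m_{s,t}$ case by yet another subdivision on whether $f'=(1,\dots,1)$. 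You instead handle $\ell(\underline{\tilde{w}'})=m_{s,t}$ upfront, using the observation that Assumption~\ref{assump:Assumption} makes $\pi_{\underline{x}}\,a^{\underline{\tilde{x}}}(\tilde{g})$ (respectively $\pi_{\underline{x}}\,a^{\underline{\tilde{y}}}(\tilde{g})$) a fixed unit of $\Coef$ for every $\tilde{g}\le\tilde{x}$ (respectively $\le\tilde{y}$), so the tuple in question is literally a unit times the image of $1\otimes\cdots\otimes 1\otimes p\in B_{\underline{w}}$. This collapses both of the paper's boundary subcases at once and is, in my view, the more transparent use of the Assumption — it makes visible that the hypothesis is precisely what makes the boundary element already lie in the Bott--Samuelson bimodule, rather than entering through a separate vanishing-for-degree-reasons argument. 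Your closing remark that ``the relevant $a$ vanishes'' in the boundary case is a slight misstatement of your own argument (you compute the constant value, you don't invoke vanishing), but the proof body is right. One small point worth making explicit: when $\underline{\tilde{w}'}=\underline{\tilde{y}}$ and $l<m_{s,t}$, every $r(\underline{w}^f)$ has length $<m_{s,t}$ and hence lies below $\tilde{y}$, so the potential $a^{\underline{\tilde{y}}}(\tilde{x})=0$ never enters; and when $\ell(\underline{\tilde{w}'})=m_{s,t}$ with $l=m_{s,t}$, the hypothesis forces $\underline{\tilde{w}'}=\underline{\tilde{x}}$, for which $\pi_{\underline{x}}a^{\underline{\tilde{x}}}(\tilde{x})=1$ matches the constant, so the constancy claim is clean in all subcases.
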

\begin{proof}
We prove the lemma by induction on $l$.
If $l = 0$, then the lemma means $\pi_{\underline{x}}a^{\underline{\tilde{w}'}}(1)p\in R$.
This is Lemma~\ref{lem:a in R}.

Take $\tilde{s}_1,\ldots,\tilde{s}_l\in \widetilde{S}$ such that $\underline{\tilde{w}} = (\tilde{s}_1,\ldots,\tilde{s}_l)$.
Put $a(g) = a^{\underline{\tilde{w}'}}(g)$ and $\underline{\tilde{v}} = (\tilde{s}_1,\ldots,\tilde{s}_{l - 1})$.
Then by Lemma~\ref{lem:criterion in M}, it is sufficient to prove
\begin{equation}\label{eq:in B, first condition}
\left(\left(\frac{\pi_{\underline{x}}}{\zeta_{\underline{w}}((f',0))}a(r(\underline{w}^{(f',0)}))\underline{w}^{(f',0)}(p)\right)\cdot \alpha_{s_l}\right)_{f'\in \{0,1\}^{l - 1}}\in B_{\underline{v}}
\end{equation}
and
\begin{equation}\label{eq:in B, second condition}
\left(\frac{\pi_{\underline{x}}}{\zeta_{\underline{w}}((f',0))}a(r(\underline{w}^{(f',0)}))\underline{w}^{(f',0)}(p)
-
\frac{\pi_{\underline{x}}}{\zeta_{\underline{w}}((f',1))}a(r(\underline{w}^{(f',1)}))\underline{w}^{(f',1)}(p)
\right)_{f'\in \{0,1\}^{l - 1}}
\in B_{\underline{v}}.
\end{equation}
We have
\[
\left(\frac{\pi_{\underline{x}}}{\zeta_{\underline{w}}((f',0))}a(r(\underline{w}^{(f',0)}))\underline{w}^{(f',0)}(p)\right)\cdot \alpha_{s_l}
=
\underline{v}^{f'}(\alpha_{s_l})\frac{\pi_{\underline{x}}}{\zeta_{\underline{w}}((f',0))}a(r(\underline{w}^{(f',0)}))\underline{w}^{(f',0)}(p)
\]
and by the definition of $\zeta_{\underline{w}}((f',0))$, we have $\underline{v}^{f'}(\alpha_{s_l})/\zeta_{\underline{w}}(f',0) = 1/\zeta_{\underline{v}}(f')$.
We also have $\underline{w}^{(f',0)} = \underline{v}^{f'}$.
Hence the left hand side of \eqref{eq:in B, first condition} is
\[
\left(\frac{\pi_{\underline{x}}}{\zeta_{\underline{v}}(f')}a(r(\underline{v}^{f'}))\underline{v}^{f'}(p)\right)_{f'\in \{0,1\}^{l - 1}}
\]
which is in $B_{\underline{v}}$ by inductive hypothesis.

Put $g = \underline{v}^{f'}$.
Then $\underline{w}^{(f',0)} = g$ and $\underline{w}^{(f',1)} = gs_{l}$.
Since $\zeta_{\underline{w}}((f',0)) = \zeta_{\underline{w}}((f',1)) = \underline{v}^{f'}(\alpha_{s_l})\zeta_{\underline{v}}(f')$, the $f'$-component of the left hand side of \eqref{eq:in B, second condition} is
\begin{align*}
& \frac{\pi_{\underline{x}}}{\zeta_{\underline{v}}(f')}\frac{1}{g(\alpha_{s_l})}(a(r(g))g(p) - a(r(gs_l))gs_l(p))\\
& = \frac{\pi_{\underline{x}}}{\zeta_{\underline{v}}(f')}\left(a(r(g)) g\left(\frac{p - s_l(p)}{\alpha_{s_l}}\right)
+\frac{a(r(g)) - a(r(gs_l))}{g(\alpha_{s_l})}gs_l(p)\right)\\
& = \frac{\pi_{\underline{x}}}{\zeta_{\underline{v}}(f')}\left(a(r(g))g(\partial_{s_l}(p)) +\frac{a(r(g)) - a(r(gs_l))}{g(\alpha_{s_l})}gs_l(p)\right).
\end{align*}

We prove that
\begin{equation}\label{eq:in B, target form}
\left(\frac{\pi_{\underline{x}}}{\zeta_{\underline{v}}(f')}a(r(\underline{v}^{f'}))\underline{v}^{f'}(\partial_{s_l}(p))\right)_{f'}
,\quad
\left(\frac{\pi_{\underline{x}}}{\zeta_{\underline{v}}(f')}\frac{a(r(\underline{v}^{f'})) - a(r(\underline{v}^{f'}s_l))}{\underline{v}^{f'}(\alpha_{s_l})}\underline{v}^{f'}(s_{l}(p))\right)_{f'}
\end{equation}
are in $B_{\underline{v}}$.
The first one is in $B_{\underline{v}}$ by inductive hypothesis.

For the second, we divide into two cases.
\begin{itemize}
\item First assume that $l < m_{s,t}$.
Then $\ell(\underline{v}^{f'}) + \ell(s_l) < m_{s,t}$.
Hence $r(\underline{v}^{f'}s_l) = \underline{\tilde{v}}^{f'}\tilde{s_l}$.
Therefore, by Lemma~\ref{eq:in B, target form}, we have $(a(r(\underline{v}^{f'})) - a(r(\underline{v}^{f'}s_l)))/\underline{v}^{f'}(\alpha_{s_l}) = a^{(\underline{\tilde{w}'},\tilde{s}_l)}(r(\underline{v}^{f'}))$.
Therefore if $l' < m_{s,t}$ then the second one of \eqref{eq:in B, target form} is in $B_{\underline{v}}$ by inductive hypothesis.

If $l' = m_{s,t}$, we have $\ell(\underline{\tilde{w}'},\tilde{s}_l) = m_{s,t} + 1$.
We also have $\ell(r(\underline{v}^{f'})) \le \ell(\underline{v})  = l - 1 \le m_{s,t} - 2 = \ell(\underline{\tilde{w}'},\tilde{s}_l) - 3$.
Hence $\ell(\underline{\tilde{w}'},\tilde{s}_l) - \ell(r(\underline{v}^{f'}))\ge 3$.
By Theorem~\ref{thm:image of 1} and Assumption~\ref{assump:Assumption}, $a^{(\underline{\tilde{w}'},\tilde{s}_l)}(r(\underline{v}^{f'})) = 0$.
Hence the second one of \eqref{eq:in B, target form} is zero which is in $B_{\underline{v}}$.

\item 
Next assume that $l = m_{s,t}$.
Then we have $\underline{\tilde{w}'} = \underline{\tilde{x}}$ and $\underline{\tilde{w}} = \underline{\tilde{y}}$.
In this case we prove that $a(r(\underline{v}^{f'})) = a(r(\underline{v}^{f'}s_l))$.

If $f'\ne (1,\ldots,1)$, then the calculation in the case of $l < m_{s,t}$ is still valid.
Hence $(a(r(\underline{v}^{f'})) - a(r(\underline{v}^{f'}s_l)))/\underline{v}^{f'}(\alpha_{s_l}) = a^{(\underline{\tilde{x}},\tilde{s}_l)}(r(\underline{v}^{f'}))$.
We have $\ell((\underline{\tilde{x}},\tilde{s}_l)) = m_{s,t} + 1$ and, since $f'\ne (1,\ldots,1)$, we have $\ell(r(\underline{v}^{f'}))\le m_{s,t} - 2$.
Therefore $\ell((\underline{\tilde{x}},\tilde{s}_l)) - \ell(r(\underline{v}^{f'}))\ge 3$.
By Theorem~\ref{thm:image of 1} and Assumption~\ref{assump:Assumption}, we have $a^{(\underline{\tilde{x}},\tilde{s}_l)}(r(\underline{v}^{f'})) = 0$.

We assume that $f' = (1,\ldots,1)$.
By the definition, $r(\underline{v}^{f'}s_l) = \tilde{x}$.
Hence $a^{\underline{\tilde{x}}}(\tilde{x}) = 1/\pi_{\underline{x}}$ by Theorem~\ref{thm:image of 1} and Lemma~\ref{lem:calculation of pi_x/x_g^w}.
We have $\ell(r(\underline{v}^{f'})) = m_{s,t} - 1 = \ell(\tilde{x}) - 1$.
Therefore by Theorem~\ref{thm:image of 1} and Lemma~\ref{lem:calculation of pi_x/x_g^w}, we have $a^{\underline{\tilde{x}}}(r(\underline{v}^{f'})) = 1/\pi_{\underline{x}}$ as $\xi = 1$.
\end{itemize}
We finish the proof.
\end{proof}

Theorem~\ref{thm:main theorem} is proved.

\subsection{Relation with the diagrammatic Hecke category}
Let $(W,S)$ be a general Coxeter system such that $\#S < \infty$ (we allow $\#S\ne 2$) and $(V,\{\alpha_u\}_{u\in S},\{\alpha_u^\vee\}_{u\in S})$ a realization.
We assume that for any $u_1,u_2\in S$ ($u_1\ne u_2$) such that the order $m_{u_1,u_2}$ of $u_1u_2$ is finite, we have $\sqbinom{m_{u_1,u_2}}{k}_Z = 0$ for any $Z\in \{X,Y\}$ and $1\le k\le m_{u_1,u_2} - 1$.
We can define the category $\mathcal{C},\mathcal{C}_Q$ by the same way as in \ref{subsec:Soergel bimodules}.
Let $\BSbimod$ be the full subcategory of $\mathcal{C}$ consisting of objects of a form $B_{s_1}\otimes\cdots\otimes B_{s_l}(n)$.
If $u_1,u_2\in S$, $u_1\ne u_2$ satisfies $m_{u_1,u_2} < \infty$, then we put $B_{u_1,u_2} = \overbrace{B_{u_1}\otimes B_{u_2}\otimes \cdots}^{m_{u_1,u_2}}$ and $B_{u_2,u_1} = \overbrace{B_{u_2}\otimes B_{u_1}\otimes \cdots}^{m_{u_1,u_2}}$.
By Theorem~\ref{thm:main theorem} there exists a homomorphism $\varphi_{u_1,u_2}\colon B_{u_1,u_2}\to B_{u_2,u_1}$ which sends $(1\otimes 1)\otimes (1\otimes 1)\otimes \cdots \otimes(1\otimes 1)$ to $(1\otimes 1)\otimes (1\otimes 1)\otimes \cdots \otimes(1\otimes 1)$.

Let $\mathcal{D}$ be the diagrammatic Hecke category defined by Elias-Williamson~\cite{MR3555156}.
We also assume that $\mathcal{D}$ is ``well-defined'', see \cite[5.1]{arXiv:2011.05432}.

We define a functor $\mathcal{F}\colon \mathcal{D}\to \BSbimod$ as follows.
For an object $(s_1,\ldots,s_l)\in \mathcal{D}$, we define $\mathcal{F}(s_1,\ldots,s_l) = B_{s_1}\otimes\cdots\otimes B_{s_l}$.
We define $\mathcal{F}$ on morphisms by 
\begin{align*}
\mathcal{F}\left(
\begin{tikzpicture}[baseline=0.25cm]
\draw (0,0.5cm) -- (0,0) node[circle,fill,draw,inner sep=0mm,minimum size=1.5mm] {};
\useasboundingbox (current bounding box.south west) + (-1mm,0) rectangle (current bounding box.north east) + (1mm,0);
\end{tikzpicture}
\right)
& =
(p\mapsto p\delta_u\otimes1 - p\otimes u(\delta_u)),\\
\mathcal{F}\left(
\begin{tikzpicture}[baseline=0.2cm]
\draw (0,0) -- (0,0.5cm) node[circle,fill,draw,inner sep=0mm,minimum size=1.5mm] {};
\useasboundingbox (current bounding box.south west) + (-1mm,0) rectangle (current bounding box.north east) + (1mm,0);
\end{tikzpicture}
\right)
& =
(p_1\otimes p_2\mapsto p_1p_2),\\
\mathcal{F}\left(
\begin{tikzpicture}[baseline=-0.1cm]
\draw (-0.5cm,0.5cm) arc[start angle=180, end angle=360, radius=0.5cm];
\draw (0,0) to (0,-0.5cm);
\useasboundingbox (current bounding box.south west) + (-1mm,0) rectangle (current bounding box.north east) + (1mm,0);
\end{tikzpicture}
\right)
& =
(p_1\otimes p_2\mapsto p_1\otimes 1\otimes p_2),\\
\mathcal{F}\left(
\begin{tikzpicture}[baseline=-0.1cm]
\draw (-0.5cm,-0.5cm) arc[start angle=180, end angle=0, radius=0.5cm];
\draw (0,0) to (0,0.5cm);
\useasboundingbox (current bounding box.south west) + (-1mm,0) rectangle (current bounding box.north east) + (1mm,0);
\end{tikzpicture}
\right)
& =
(p_1\otimes p_2\otimes p_3 \mapsto p_1\partial_u(p_2)\otimes p_3),\\
\mathcal{F}(\text{$2m_{u_1,u_2}$-valent vertex}) & = \varphi_{u_1,u_2}.
\end{align*}
for $u,u_1,u_2\in S$ and $p,p_1,p_2,p_3\in R$.
Here we regard $B_u\otimes B_u = R\otimes_{R^u}R\otimes_{R^u}R(2)$ and $\delta_u\in V$ is an element satisfying $\langle \alpha_u^\vee,\delta_u\rangle = 1$.

\begin{lem}
The functor $\mathcal{F}$ is well-defined.
\end{lem}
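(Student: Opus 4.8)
The plan is to check that $\mathcal{F}$ sends each defining relation of $\mathcal{D}$ to an identity in $\BSbimod$; since $\mathcal{D}$ has been assumed well-defined and is presented by finitely many generators and relations, this is all that is needed. I would split the relations into three groups. The first consists of the relations in which no $2m_{u_1,u_2}$-valent vertex appears: the one-colour Frobenius relations (unit, counit, associativity, needle, barbell) and the polynomial forcing relations. The bimodules $B_u$ and the images that $\mathcal{F}$ assigns to the dot, trivalent and polynomial generators coincide with the data used for the Soergel-bimodule realization of $\mathcal{D}$ in \cite{MR3555156}, so I would simply invoke the verification carried out there; the only point to add is that all these maps are morphisms in $\mathcal{C}$, i.e.\ respect the $\mathcal{C}_Q$-decomposition, which is immediate from the explicit formula for $(B_u)_Q = (B_u)_Q^e\oplus(B_u)_Q^u$ in \ref{subsec:Soergel bimodules} together with $(M\otimes N)_Q = M_Q\otimes N_Q$.

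The second group consists of the relations in which exactly one $2m_{u_1,u_2}$-valent vertex occurs, together with one-colour generators in the colours $u_1,u_2$: cyclicity (rotation invariance) of the $2m$-valent vertex, the two Jones--Wenzl--type relations obtained by capping it with a dot, the polynomial forcing relations for the $2m$-valent vertex, and the relation describing the composite $\varphi_{u_2,u_1}\circ\varphi_{u_1,u_2}$. For these I would work in $\mathcal{C}_Q$: as $M\mapsto M_Q$ is faithful it suffices to verify each relation after applying $(-)_Q$, and there the morphism $\varphi_{u_1,u_2}$ is the explicit matrix $(G_e^f)$, while its effect on a general element of a Bott--Samuelson bimodule is given by the rank-two formula for $\varphi_Q$ computed above and is tested for membership in the relevant Bott--Samuelson bimodule by Lemma~\ref{lem:criterion in M}. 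Each relation then reduces to an identity among products of two-coloured quantum numbers and binomial coefficients, which I expect to follow from Lemmas~\ref{lem:quantum num, even, odd}--\ref{lem:quantim binom} together with the hypothesis on two-coloured binomials (Assumption~\ref{assump:Assumption}, equivalently Proposition~\ref{prop:the assumptions}); note that Proposition~\ref{prop:1 -> 1} already records that both $\varphi_{u_1,u_2}$ and $\varphi_{u_2,u_1}$ send the distinguished generator to the distinguished generator, which is exactly the input needed for the cyclicity check. Equivalently, one may observe that the verification of these relations in \cite{MR3555156} uses only the existence of the $2m$-valent vertex morphism, now Theorem~\ref{thm:main theorem}, together with identities implied by Assumption~\ref{assump:Assumption}.

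The third group, relevant only when $\#S\ge 3$, consists of the Zamolodchikov relations relating compositions of $2m$-valent vertices for three distinct colours, and this is the step I expect to be the main obstacle. I would follow the strategy of \cite{MR3555156}: each Zamolodchikov relation lives inside the subcategory generated by the three colours involved, and its proof there is assembled from the one- and two-colour relations already checked in the first two groups, so it should carry over once the morphisms $\varphi_{u_1,u_2}$ are substituted for the $2m$-valent vertices. Where that reduction calls for a genuine computation I would again pass to $\mathcal{C}_Q$, where each $\varphi_{u_1,u_2}$ is the explicit matrix $(G_e^f)$, and reduce to a finite identity among two-coloured quantum numbers handled by the lemmas of Section~\ref{sec:A calculation in the universal Coxeter system of rank two}; faithfulness of $M\mapsto M_Q$ makes this legitimate. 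Combining the three groups shows that $\mathcal{F}$ respects every defining relation and is therefore well-defined.
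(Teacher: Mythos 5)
Your key structural idea — use faithfulness of $(\cdot)_Q\colon\BSbimod\to\mathcal{C}_Q$ to reduce the relation-checking to $\mathcal{C}_Q$ — is exactly the idea the paper uses, but you stop short of the observation that makes the proof essentially trivial. In \cite{arXiv:2011.05432} a functor $\Lambda\colon\mathcal{D}\to\mathcal{C}_Q$ is constructed and \emph{proved} to be well-defined, meaning that every defining relation of $\mathcal{D}$, including one-colour, two-colour, and Zamolodchikov relations, has already been verified in $\mathcal{C}_Q$. Since the images of the generators under $\mathcal{F}$ are defined so that $(\cdot)_Q\circ\mathcal{F}$ agrees with $\Lambda$ on generators (in particular $(\varphi_{u_1,u_2})_Q$ is precisely the matrix $(G_e^f)$ that $\Lambda$ assigns to the $2m$-valent vertex), one has $\Lambda = (\cdot)_Q\circ\mathcal{F}$ by construction. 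Faithfulness of $(\cdot)_Q$ then gives well-definedness of $\mathcal{F}$ in two sentences, with none of the case analysis you propose.

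As written, your proposal has a genuine gap: in the third group you yourself flag the Zamolodchikov relations as ``the main obstacle'' and describe only a strategy rather than a verification, and in the second group you only ``expect'' the needed identities among two-coloured quantum numbers to follow from the lemmas of Section~\ref{sec:A calculation in the universal Coxeter system of rank two}. Neither of these is actually carried out, so the argument is incomplete. The fix is not to carry them out — it is to recognize that \cite{arXiv:2011.05432} has already done all of this work in $\mathcal{C}_Q$, so the entire relation-checking can be replaced by the single equation $\Lambda = (\cdot)_Q\circ\mathcal{F}$.
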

\begin{proof}
In \cite{arXiv:2011.05432}, a functor $\Lambda\colon \mathcal{D}\to \mathcal{C}_Q$ is defined and it is proved that $\Lambda$ is well-defined.
By the construction, we have $\Lambda = (\cdot)_Q\circ \mathcal{F}$.
Therefore $(\cdot)_Q\circ\mathcal{F}$ is well-defined and since $(\cdot)_Q\colon \BSbimod\to \mathcal{C}_Q$ is faithful, $\mathcal{F}$ is also well-defined.
\end{proof}

\begin{thm}
The functor $\mathcal{F}\colon \mathcal{D}\to \BSbimod$ gives an equivalence of categories.
\end{thm}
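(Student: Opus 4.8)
The plan is to prove that $\mathcal{F}$ is essentially surjective and fully faithful. Essential surjectivity is immediate from the definitions: the objects of $\mathcal{D}$ are exactly the Bott--Samuelson expressions $(s_1,\ldots,s_l)$ together with grading shifts, and $\mathcal{F}$ sends $(s_1,\ldots,s_l)(n)$ to $B_{s_1}\otimes\cdots\otimes B_{s_l}(n)$, so $\mathcal{F}$ is already surjective on objects onto $\BSbimod$. Thus everything comes down to showing that for all expressions $\underline{w},\underline{v}$ the map
\[
\mathcal{F}\colon \Hom_{\mathcal{D}}(\underline{w},\underline{v})\longrightarrow \Hom_{\BSbimod}(B_{\underline{w}},B_{\underline{v}})
\]
is bijective; since $\mathcal{F}$ is $\Coef$-linear and compatible with grading shifts, it suffices to treat one degree at a time.

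To compute the two $\Hom$-spaces I would invoke the theory of double light leaves. On the diagrammatic side, Elias--Williamson~\cite{MR3555156} prove that $\Hom_{\mathcal{D}}(\underline{w},\underline{v})$ is a graded free right $R$-module with basis the double light leaves $\{\mathbb{LL}_{e,f}\}$, indexed by pairs of subexpressions $e$ of $\underline{w}$ and $f$ of $\underline{v}$ with $\underline{w}^e=\underline{v}^f$, each sitting in a prescribed degree. On the bimodule side, \cite{arXiv:1901.02336} establishes the analogue: under \cite[Assumption~3.2]{arXiv:1901.02336}, which holds here by Theorem~\ref{thm:main theorem} (using Assumption~\ref{assump:Assumption}), the space $\Hom_{\BSbimod}(B_{\underline{w}},B_{\underline{v}})$ is a graded free right $R$-module with a double light leaves basis $\{\mathbb{LL}'_{e,f}\}$ attached to the same combinatorial data and the same degrees, built by the same recursion from the bimodule avatars of the generating morphisms (dot, trivalent vertex, cup, cap, and the $2m_{u_1,u_2}$-valent vertex realized by $\varphi_{u_1,u_2}$).

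The decisive point will be that $\mathcal{F}$ identifies these two families. Since $\mathcal{F}$ is a monoidal functor sending each diagrammatic generator to the corresponding bimodule generator --- in particular the $2m_{u_1,u_2}$-valent vertex to $\varphi_{u_1,u_2}$, which exists by Theorem~\ref{thm:main theorem} --- and since the double light leaves are assembled from these generators by identical compositions and tensor products on both sides, one obtains $\mathcal{F}(\mathbb{LL}_{e,f})=\mathbb{LL}'_{e,f}$ for every $(e,f)$. Hence $\mathcal{F}$ carries an $R$-basis of $\Hom_{\mathcal{D}}(\underline{w},\underline{v})$ bijectively onto an $R$-basis of $\Hom_{\BSbimod}(B_{\underline{w}},B_{\underline{v}})$, so $\mathcal{F}$ is an isomorphism on every $\Hom$-space. (If one only wants to use that the $\mathbb{LL}'_{e,f}$ span, the same identification shows $\mathcal{F}$ is full, and faithfulness then follows in each fixed degree $d$, where $\mathcal{F}$ becomes a surjection between free $\Coef$-modules of equal finite rank, a surjective endomorphism of a finitely generated module over a commutative ring being an isomorphism.) Combined with essential surjectivity, this proves $\mathcal{F}$ is an equivalence.

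The step I expect to be the real work is the identification $\mathcal{F}(\mathbb{LL}_{e,f})=\mathbb{LL}'_{e,f}$: one must check that the recursive construction of the double light leaves only ever invokes relations among the generators that $\mathcal{F}$ already respects --- equivalently, that the bimodule light leaves of \cite{arXiv:1901.02336} are set up so as to mirror the diagrammatic construction step by step --- and that the normalization conventions (the choice of $\delta_u$, the signs built into the $2m_{u_1,u_2}$-valent vertex, the chosen reduced expressions $\underline{x},\underline{y}$) match exactly. Once this bookkeeping is in place, the remainder of the argument is formal.
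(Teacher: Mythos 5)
Your proposal is correct and follows essentially the same route as the paper: both arguments note that essential surjectivity is immediate and then reduce full faithfulness to the fact that the double (light) leaves form bases for the $\Hom$-spaces on the diagrammatic side (Elias--Williamson) and on the bimodule side (Abe), with $\mathcal{F}$ sending the one basis to the other by construction. The paper states this in compressed form, citing the earlier works; your write-up just makes the same argument explicit.
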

\begin{proof}
The proof is the same as that of the corresponding theorem in \cite{arXiv:1901.02336_accepted}.
It is obviously essentially surjective.
In \cite{MR3555156}, for each object $M,N\in \mathcal{D}$, elements in $\Hom_{\mathcal{D}}(M,N)$ called double leaves are defined and proved that it is a basis of $\Hom_{\mathcal{D}}(M,N)$ \cite[Theorem~6.12]{MR3555156}.
In \cite{arXiv:1901.02336_accepted}, the corresponding statement in $\BSbimod$ is proved, namely double leaves in $\Hom_{\mathcal{C}}(\mathcal{F}(M),\mathcal{F}(N))$ are defined and proved that it is a basis.
By the definition of $\mathcal{F}$, $\mathcal{F}$ sends double leaves to double leaves.
Hence $\mathcal{F}$ gives an isomorphism between morphism spaces.
\end{proof}

\end{document}